\newtheorem{theorem}{Theorem}
\newtheorem{prop}{Proposition}
\newtheorem{lemma}{Lemma}     
\newtheorem{coro}{Corollary}
\theoremstyle{definition}
\newtheorem{remark}{Remark}
\newcommand{\ts}{\hspace{0.5pt}}
\newcommand{\nts}{\hspace{-0.5pt}}
\newcommand{\AAA}{\mathbb{A}}
\newcommand{\GG}{\mathbb{G}}
\newcommand{\GGc}{\mathbb{G}^{\mathsf{c}}_{\phantom{I}}}
\newcommand{\RR}{\mathbb{R}\ts}
\newcommand{\HH}{\mathbb{H}}
\newcommand{\NN}{\mathbb{N}}
\newcommand{\PP}{\mathbb{P}}
\newcommand{\cA}{\mathcal{A}}
\newcommand{\cB}{\mathcal{B}}
\newcommand{\cC}{\mathcal{C}}
\newcommand{\cD}{\mathcal{D}}
\newcommand{\cE}{\mathcal{E}}
\newcommand{\cM}{\mathcal{M}}
\newcommand{\cP}{\mathcal{P}}
\newcommand{\cU}{\mathcal{U}}
\newcommand{\pa}{\hphantom{g}\nts\nts}
\newcommand{\vph}{\vphantom{I}}
\newcommand{\dd}{\,\mathrm{d}}
\newcommand{\ee}{\ts\mathrm{e}}
\newcommand{\rtot}[1]{\varrho^{#1}_{\mathrm{tot}}}
\newcommand{\alin}{a^{\mathrm{lin}}_{t}}
\newcommand{\pmin}{\ts\ts\underline{\nts\nts 0\nts\nts}\ts\ts}
\newcommand{\pmax}{\ts\ts\underline{\nts\nts 1\nts\nts}\ts\ts}
\newcommand{\bigtim}{\mbox{\LARGE $\times$}}
\newcommand{\udo}[1]{\underaccent{$\text{.}$}{#1\ts}\nts}
\newcommand{\exend}{\hfill$\Diamond$}
\begin{document}

\title[Solution of the recombination equation]
{The general recombination equation \\[2mm]
in continuous time and its solution}

\author{Ellen Baake}
\address{Technische Fakult\"at, Universit\"at Bielefeld, 
         Postfach 100131, 33501 Bielefeld, Germany}

\author{Michael Baake}
\address{Fakult\"at f\"ur Mathematik, Universit\"at Bielefeld, 
         Postfach 100131, 33501 Bielefeld, Germany}

\author{Majid Salamat}
\address{Technische Fakult\"at, Universit\"at Bielefeld, 
         Postfach 100131, 33501 Bielefeld, Germany}

\begin{abstract} 
  The process of recombination in population genetics, in its
  deterministic limit, leads to a nonlinear ODE in the Banach space of
  finite measures on a locally compact product space. It has an
  embedding into a larger family of nonlinear ODEs that permits a
  systematic analysis with lattice-theoretic methods for general
  partitions of finite sets. We discuss this type of system, reduce it
  to an equivalent finite-dimensional nonlinear problem,
  and establish a connection with an ancestral partitioning process,
  backward in time. We solve the
  finite-dimensional problem recursively for generic sets of parameters
  and briefly
  discuss the singular cases, and how to extend the solution to this
  situation.
\end{abstract}

\maketitle

\section{Introduction}
This contribution is concerned with differential equation models for
the dynamics of the genetic composition of populations that evolve
under recombination. Here, recombination is the genetic mechanism in
which two parent individuals are involved in creating the mixed type
of their offspring during sexual reproduction.  The essence of this
process is illustrated in Fig.~\ref{fig:lifecycle} and may be
idealised and summarised as follows.

Genetic information is encoded in terms of finite sequences.
Eggs and sperm (i.e., female and male germ cells or
\emph{gametes}) each carry one such sequence. They go through the
following life cycle: At fertilisation, two gametes meet randomly and
unite, thus starting the life of a new individual, which is equipped
with both the maternal and the paternal sequence. At maturity, this
individual generates its own germ cells. This process may include
recombination, that is, the maternal and paternal sequences perform
one or more \emph{crossovers} and are cut and relinked accordingly, so
that two `mixed' sequences emerge. These are the new gametes and start
the next round of fertilisation (by random mating within a large
population).

\begin{figure}[ht]
  \begin{center}
  \includegraphics[width=0.83\textwidth]{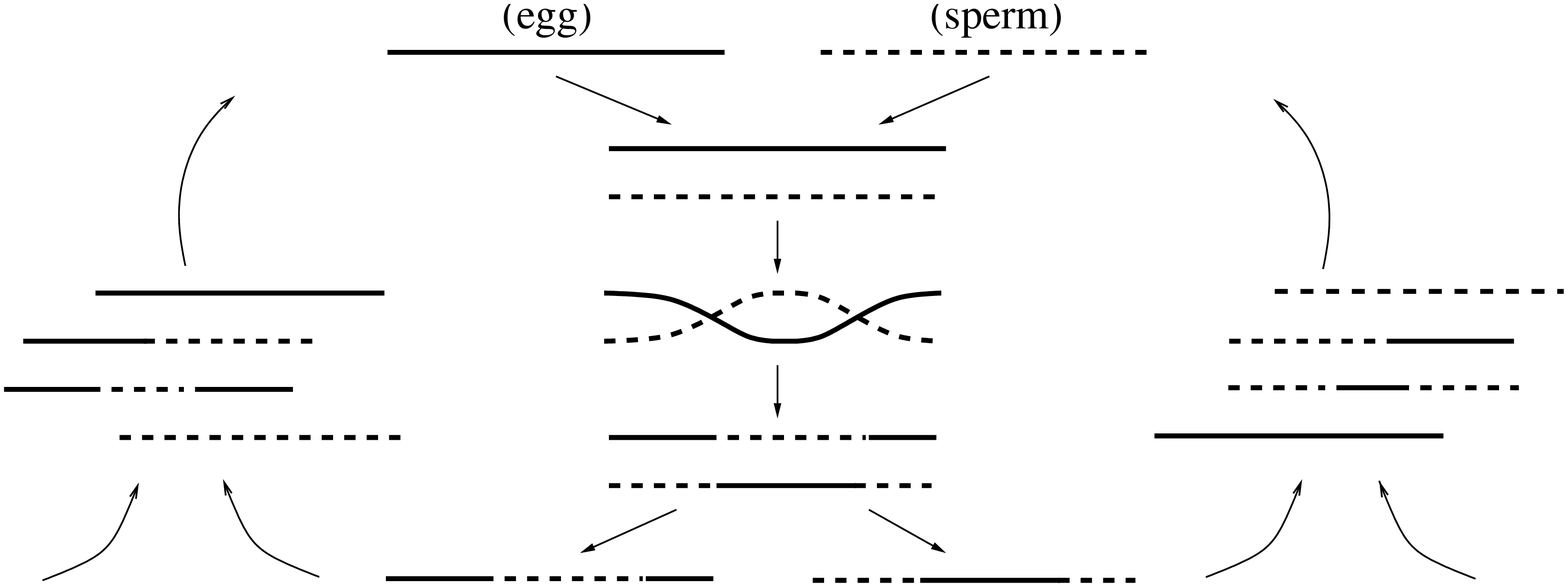}
\end{center} \caption{\label{fig:lifecycle} {\small Life cycle of a
    population under sexual reproduction and recombination. Each line
    symbolises a sequence of sites that defines a gamete (such as the
    two at the top that start the cycle as `egg' and `sperm'). The
    pool of gametes at the left and the right comes from a large
    population of recombining individuals. These sequences meet
    randomly to start the life of a new individual. Altogether, each
    of these sequences has been pieced together from two randomly
    chosen parental sequences.}}
\end{figure}

Models of this process aim at describing the dynamics of the genetic
composition of a population that goes through this life cycle
repeatedly.  These models come in various flavours: in discrete or
continuous time; with various assumptions about the crossover pattern;
and in a deterministic or a stochastic formulation, depending on
whether or not the population is assumed to be so large that
stochastic fluctuations may be neglected.  We will employ the
deterministic continuous-time approach here, but allow for very
general crossover patterns. 
The biologically relevant cases will be mentioned throughout the
paper, as will various connections to the existing body of literature.

From now on, we describe populations at the level of their gametes and
thus identify gametes with individuals.  Their genetic information is
encoded in terms of a linear arrangement of sites, indexed by the set
$S:= \{ 1, \dots , n\}$.  For each site $i\in S$, there is a set $X_i$
of `letters' that may possibly occur at that site.  For the sake of
concreteness, we use \emph{finite} sets $X_i$ for
the moment; we generalise this to arbitrary locally compact spaces
$X_i$ in Section~\ref{sec:partitioning}.

A \emph{type} is thus defined as a sequence $x=(x^{}_1, \dots ,
x^{}_n) \in X_1 \times \cdots \times X_n =:X$, where $X$ is called the
\emph{type space}. By construction, $x^{}_i$ is the $i$-th component
or coordinate of $x$, and we define $x^{}_I := (x^{}_i)^{}_{i \in I}$
as the collection of `coordinates' with indices in $I$, where $I$ is a
subset of $S$.  A \emph{population} is identified with
(or described by) a probability
vector $p=\big ( p(x) \big )_{x \in X}$ on $X$, where $p(x)$ denotes
the proportion of individuals of type $x$ in $X$. Note that we assume
the sequences to have fixed length. Additional processes that may
change this, such as copying blocks, are disregarded here; see
\cite{RB} and references therein for possible extensions.

With Fig.~\ref{fig:lifecycle} in mind, recombination may now be
modelled as follows. A new (`offspring') sequence is formed as the
`mixture' of two randomly chosen parental sequences (say $x$ and $y$)
from the population: It copies the letters of $x$ at some of its sites
and those of $y$ at all others.  If, for example, a double crossover
happens between sites $i$ and $i+1$ and between $j$ and $j+1$ ($i<j$),
then the offspring sequence reads $(x^{}_1,\ldots,x^{}_i,
y^{}_{i+1},\ldots, y^{}_j,x^{}_{j+1},\ldots, x_n)$.  The offspring
sequence replaces a randomly chosen sequence (possibly one of the
parents, but this is negligibly rare in a large population).  Viewed
differently, the offspring sequence in our example reads $x$ whenever
the parents are of the form $(x^{}_1,\ldots,x^{}_i, *,\ldots,
*,x^{}_{j+1},\ldots, x_n)$ and $(*,\ldots,*, x^{}_{i+1},\ldots,
x^{}_j,*,\ldots, *)$. Here, a `$*$' at site $i$ stands for an arbitrary
element of $X_i$, so means marginalisation. This will be helpful when
formulating the differential equation.

The sites that come from the paternal and the maternal sequences,
respectively, define a \emph{partition} ${\mathcal A}$ of $S$ into two
parts. Due to the random choice of the parents, we need not keep track
of which sequence was `maternal' and which was `paternal'. In
principle, all partitions of $S$ into two parts ($\cA=\{A_1,A_2\}$)
can be realised, via a suitable number of crossovers at suitable
positions. If no crossover happens, then the partition is $\cA
=\{S\}$, and the offspring is an exact copy of the first
parent. Reproduction with recombination according to a partition $\cA$
happens at rate $\varrho(\cA)$.

We shall introduce all notions with more care later. For now, we turn
the verbal description into a differential equation system and obtain
\begin{equation}\label{eq:ode-intro}
   \dot p^{\pa}_t (x) \, = \!  
   \sum_{{\mathcal A} \in {\mathbb P}_2 (S)}\! \varrho({\mathcal A})
   \bigl(p^{\pa}_t (x^{\pa}_{\! A_1}, *) \, 
    p^{\pa}_t (* \ts , x^{\pa}_{\! A_2}) - p^{\pa}_t (x) \bigr)
\end{equation}
for all $x \in X$, where ${\mathbb P}_2 (S)$ denotes the set of
partitions of $S$ into two parts. Eq.~\eqref{eq:ode-intro} may be
understood as a `mass balance' equation: For every ${\mathcal A} \in
{\mathbb P}_2 (S)$, sequences of type $x$ are `produced' from the
corresponding parental sequences at overall rate $\varrho({\mathcal
  A}) \, p^{}_t (x^{}_{\! A_1}, *) \, p^{}_t (*, x^{}_{\! A_2})$, where the
product reflects the random combination; at the same time, sequences
of type $x$ are lost (i.e., replaced by new ones) at overall rate
$\varrho({\mathcal A}) \, p^{}_t (x)$. Note that the case ${\mathcal
  A}=\{S\}$ provides no net contribution to $\dot p_t(x)$, since gain
and loss are equal in this case.\smallskip

The resulting ODE system appears difficult to handle, due to the large
number of possible states and the nonlinearity of the right-hand side.
In previous papers \cite{BB,MB,BH,WBB,EB-ICM,BaHu,BW}, we have
concentrated on a special case, namely, the situation in which at most
one crossover happens at any given time. That is, we restricted
attention to \emph{ordered partitions into two parts}, corresponding
to the sites before and after a single-crossover point.  We have
analysed the resulting models in continuous time (both deterministic
and stochastic), as well as in discrete time. For the deterministic
continuous-time system, a simple explicit solution is available
\cite{BB,MB}. This simplicity is due to some underlying linearity.

It is now time to tackle the case of general partitions (in continuous
time). Therefore, in this contribution, we give up the
single-crossover assumption -- and even allow for an arbitrary number
of parents in a given recombination event, which leads to partitions
with more than two parts. Even though this is not a common biological
feature, we will see that it requires little extra mathematical
effort. Also, it is a very natural structure on the lattice of
partitions of a (finite) set.
The restriction to partitions form the biologically most relevant
subset $\PP_{2} (S)$ will always be possible by a suitable choice
of the model parameters, which are the recombination rates 
$\varrho (\cA)$.

This contribution is motivated by the pioneering work of Geiringer
\cite{Gei} and Bennett \cite{Ben}, who worked on a similar system in
\emph{discrete} time (but restricted to a special type space); 
the later work of Lyubich \cite[Chapter~6]{Lyu}, who worked out
much of the underlying structure and got close to
a solution in 1992; and by
more recent work of Dawson \cite{D00,D02}, who presented a (recursive)
solution in
2000 and 2002. It relies on a certain nonlinear transformation from
(gamete or type) frequencies to suitable correlation functions, which
decouple from each other and decay geometrically. If sequences of more
than three sites are involved, this transformation must be constructed
via recursions that involve the parameters of the recombination
process.

Dawson's construction testifies to remarkable insight into the
problem. However, it is not easy to penetrate to the mathematical core
of his arguments. We therefore start at the very beginning and
formulate the model on a fairly general type space, in a
measure-theoretic framework, and allowing for arbitrary partitions.
More importantly, we put the problem into a systematic
lattice-theoretic setting; this will become the key for the
transparent construction of the solution. 
Furthermore, we establish a connection with a partitioning process
backward in time, which describes how an individual in the present
population has been pieced together from the genetic material of its 
ancestors.
This provides a link to the \emph{ancestral recombination graph} (ARG),
which is the ancestral process commonly used in  models 
of recombination in \emph{finite} populations, see 
\cite[Ch.~3.4]{Durrett}. \smallskip

The paper is organised as follows. After introducing the mathematical
objects we need and some of their properties in
Section~\ref{sec:partitions}, the general recombination equation is
discussed in Section~\ref{sec:gen-reco}. As a first step, this is done
in the setting of a measure-valued ordinary differential equation
(ODE), which is then reduced to a finite-dimensional ODE
system. Section~\ref{sec:linear} solves this system under a linearity
assumption, which is motivated by previous work, but does not give
the solution in sufficient generality. 

As a further preparation for the general solution, we study the
behaviour of the system under marginalisation in
Section~\ref{sec:marginal}.
Based on this, Section~\ref{sec:partitioning}
establishes the connection with the partitioning
process. This is followed by the derivation
of the general solution in Section~\ref{sec:gen-sol}, which is
recursive in nature and applies to the generic choice of the
recombination rates. More detailed properties of the solution are
investigated in Section~\ref{sec:properties}, while
Section~\ref{sec:degen} deals with various types of non-generic
cases. The Appendix provides some material for the treatment of
degenerate cases.

This paper builds on previous work, most importantly on
\cite{BB,MB}. Some of the results from these papers will freely be
used below, and not re-derived here (though we will always provide
precise references).

\section{Partitions, measures and recombinators}
\label{sec:partitions}

Let $S$ be a finite set, and consider the lattice $\PP = \PP (S)$ of
partitions of $S$; see \cite{Aigner} for general background on lattice
theory. When the cardinality of $S$ is $|S|=n$, the set $\PP$ contains
$B(n)$ elements, known as the \emph{Bell number}; compare \cite[A\ts
000110]{OEIS}. With $B(0):=1$, these numbers are recursively computed
as $B(n+1) = \sum_{k=0}^{n} \binom{n}{k} B(k)$ for $n\geqslant 0$,
with generating function $F(z) := \sum_{n=0}^{\infty} \frac{B(n)}{n!}
z^{n} = \exp (\ee^{z} - 1)$ and explicit formula $B(n) = \frac{1}{\ee}
\sum_{k=0}^{\infty} \frac{k^{n}}{k!}$.  A subset of relevance to us,
for the biological applications, consists of all partitions of $S$
into two parts, $\PP_{2} (S) = \bigl\{ \cA\in\PP (S) \,{\big|}\, \lvert
\cA \rvert = 2 \bigr\}$, which contains $2^{n-1} - 1$ elements.  Note
that $\PP_{2} (S)$ generates the lattice $\PP (S)$ in an obvious way.

Here, we write a partition of $S$ as $\cA = \{ A_{1}, \dots , A_{r}
\}$, where $r = |\cA|$ is the number of its parts (or blocks), and one has $A_{i}
\cap A_{j} = \varnothing$ for all $i\ne j$ together with $A_{1} \cup
\dots \cup A_{r} = S$. The natural ordering relation is denoted by
$\preccurlyeq$, where $\cA \preccurlyeq \cB$ means that $\cA$ is
\emph{finer} than $\cB$, or that $\cB$ is \emph{coarser} than $\cA$.
The conditions $\cA \preccurlyeq \cB$ and $\cB \succcurlyeq \cA$ are
synonymous, while $\cA \prec\cB$ means $\cA \preccurlyeq \cB$ together
with $\cA \ne \cB$.

The joint refinement of two partitions $\cA$ and $\cB$ is written as
$\cA \wedge \cB$, and is the coarsest partition below $\cA$ and $\cB$.
The unique \emph{minimal} partition within the lattice $\PP$ is
denoted as $\pmin = \{ \{x\} \mid x \in S \}$, while the unique
\emph{maximal} one is $\pmax = \{ S \}$.  When $\cA \preccurlyeq \cB$,
we also employ the interval notation $[\cA , \cB] := \{ \cC \mid \cA
\preccurlyeq \cC \preccurlyeq \cB \}$. For a general subset $\GG$ of
$\PP$, we write the complement as $\GG_{}^{\mathsf{c}} = \PP\setminus
\GG$.  Finally, when $\GG \subseteq \PP$, the coarsest partition below
all elements of $\GG$ is denoted by $\bigwedge \! \GG$. Note that
$\bigwedge\! \varnothing = \pmax$ by convention.

When $U$ and $V$ are disjoint (finite) sets, two partitions
$\cA\in\ts\PP(U)$ and $\cB\in\ts\PP(V)$ can be joined to form an
element of $\PP(U\nts \cup V)$. We denote such a \emph{joining} by
$\cA\sqcup \cB$, and similarly for multiple joinings.  Conversely, if
$U\nts\subset S$, a partition $\cA\in\ts\PP(S)$, with $\cA = \{
A_{1}, \dots , A_{r} \}$ say, defines a unique partition of $U$ by
restriction. The latter is denoted by $\cA|^{\pa}_{U}$, and its parts
are precisely all non-empty sets of the form $A_{i} \cap U$ with
$1\leqslant i \leqslant r$. \medskip

Let now $S=\{ 1,2,\dots ,n\}$ and define $X = X_{1} \times \dots
\times X_{n}$, where each $X_{i}$ is a locally compact space. In many
concrete applications, the $X_{i}$ will be finite sets, but we do not
make such a restriction as it is neither necessary nor
desirable.  In particular, there are situations in quantitative
genetics \cite{Buerger} that will profit from the more general setting
we employ here.

When $S$ and $X$ are given, we denote the natural projection to the
$i$th component by $\pi_{i}$, so $\pi_{i} (X) = X_{i}$. Similarly, for
an arbitrary non-empty subset $U\nts\subseteq S$, we use the notation
$\pi^{\pa}_{U} \!  : \; X \longrightarrow X^{\pa}_{U}:= \bigtim_{i\in
  U} X_{i}$ for the projection to the subspace $X^{\pa}_{U}$.

Let $\cM (X)$ denote the space of finite, regular Borel measures on
$X$, equipped with the usual total variation norm $\| . \|$, which
makes it into a Banach space. Also, we need the closed subset (or
cone) $\cM_{+} (X)$ of positive measures, which we mean to include the
zero measure. Within $\cM_{+} (X)$, we denote the closed subset of
probability measures by $\cP (X)$. Note that $\cM_{+} (X)$ and
$\cP(X)$ are convex sets.  The restriction of a measure $\mu\in\cM
(X)$ to a subspace $X^{\pa}_{U}$ is written as $\pi^{\pa}_{U} . \mu :=
\mu \circ \pi^{-1}_{U}$, which is consistent with marginalisation of
measures. For any Borel set $A\subseteq X^{\pa}_{U}$, one thus has the
relation $\bigl(\pi^{\pa}_{U} . \mu\bigr) (A) = \mu \bigl(
\pi^{-1}_{U} (A)\bigr)$.

Given a measure $\nu \in \cM(X)$ and a partition $\cA = \{ A_{1},
\dots ,A_{r} \} \in \PP$, we define the mapping $R^{\pa}_{\nts \cA} \! :
\, \cM(X)\longrightarrow \cM(X)$ by $\nu \mapsto R^{\pa}_{\nts \cA}
(\nu)$ with $R^{\pa}_{\nts \cA} (0) := 0$ and, for $\nu\ne 0$,
\begin{equation}\label{eq:def-recomb}
  R^{\pa}_{\nts \cA} (\nu) \, := \, \frac{1}{\| \nu \|^{r-1}}
  \bigotimes_{i=1}^{r} \bigl(\pi^{\pa}_{A_{i}} . \nu\bigr)
\end{equation}
where the product is (implicitly) `site ordered', i.e.\ it matches the
ordering of the sites as specified by the set $S$. We shall also use
site ordering for cylinder or product sets. We call a mapping of type
$R^{\pa}_{\!\cA}$ a \emph{recombinator}. Note that recombinators are
nonlinear whenever $\cA \ne \pmax$.

\begin{prop}\label{prop:gen-props}
  Let\/ $S=\{ 1,2,\dots ,n \}$ and\/ $X=X_{1} \times \dots \times
  X_{n}$ as above. Now, let\/ $\cA \in
  \PP(S)$ be arbitrary, and consider the corresponding recombinator as
  defined by Eq.~\eqref{eq:def-recomb}. Then, the following assertions
  are true.
\begin{enumerate}\itemsep=1pt
\item $R^{\pa}_{\! \cA}$ is positive homogeneous of degree\/ $1$, which
  means that\/ $R^{\pa}_{\! \cA} (a \nu) = a R^{\pa}_{\! \cA} (\nu)$ holds
  for all\/ $\nu\in\cM(X)$ and\/ $a\geqslant 0$.
\item $R^{\pa}_{\! \cA}\vph$ is globally Lipschitz on\/ $\cM(X)$, with
  Lipschitz constant\/ $L\leqslant 2 \lvert \cA \rvert + 1$.
\item $\| R^{\pa}_{\! \cA} (\nu)\| \leqslant \| \nu \|\vph$ holds for
  all\/ $\nu\in\cM(X)$.
\item $R^{\pa}_{\! \cA}\vph$ maps\/ $\cM_{+} (X)$ into itself.
\item $R^{\pa}_{\! \cA}\vph$ preserves the norm of positive measures,
  and hence also maps\/ $\cP (X)$ into itself.
\end{enumerate}
\end{prop}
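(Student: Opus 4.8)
The plan is to obtain the algebraic properties (1), (3), (4) and (5) by direct computation and to reserve the real work for the Lipschitz estimate (2). Throughout, write $r=\lvert\cA\rvert$ and, for $\nu\neq 0$, abbreviate the marginals by $\nu_{i}:=\pi^{\pa}_{A_{i}}.\nu$, so that $R^{\pa}_{\! \cA}(\nu)=\|\nu\|^{-(r-1)}\bigotimes_{i=1}^{r}\nu_{i}$. For (1), I would substitute $a\nu$ with $a>0$ and use that both the total variation norm and each marginalisation scale linearly: the tensor product contributes a factor $a^{r}$ against the factor $a^{-(r-1)}$ from the prefactor, leaving a single $a$, while $a=0$ is immediate from $R^{\pa}_{\! \cA}(0)=0$. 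For (3), I would invoke two standard facts about the total variation norm, namely that it is multiplicative under tensor products, $\|\mu\otimes\lambda\|=\|\mu\|\,\|\lambda\|$, and non-increasing under marginalisation, $\|\pi^{\pa}_{A_{i}}.\nu\|\leqslant\|\nu\|$; together they give $\|R^{\pa}_{\! \cA}(\nu)\|=\|\nu\|^{-(r-1)}\prod_{i}\|\nu_{i}\|\leqslant\|\nu\|$. Part (4) follows because marginals of a positive measure are positive, tensor products of positive measures are positive, and the prefactor is a positive scalar (with the zero measure handled by convention). For (5), the key point is that marginalisation of a \emph{positive} measure preserves total mass, so $\|\nu_{i}\|=\|\nu\|$ for every $i$ and the computation in (3) becomes an equality, $\|R^{\pa}_{\! \cA}(\nu)\|=\|\nu\|$; restricting to norm one then shows $R^{\pa}_{\! \cA}$ maps $\cP(X)$ into itself.

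The step I expect to be the main obstacle is (2), where the difficulty lies in the interaction between the multilinear numerator and the nonlinear normalisation $\|\nu\|^{-(r-1)}$, which is singular at the origin. The key idea is to use the degree-one homogeneity from (1) to reduce the estimate to the unit sphere, where the normalisation disappears. If $\mu=0$ or $\nu=0$, part (3) already yields the bound with constant $1$, so I may assume $a:=\|\mu\|>0$ and $b:=\|\nu\|>0$ and write $\mu=a\hat\mu$, $\nu=b\hat\nu$ with $\|\hat\mu\|=\|\hat\nu\|=1$. Homogeneity then gives the decomposition
\begin{equation*}
  R^{\pa}_{\! \cA}(\mu)-R^{\pa}_{\! \cA}(\nu)
  \,=\, a\bigl(R^{\pa}_{\! \cA}(\hat\mu)-R^{\pa}_{\! \cA}(\hat\nu)\bigr)
  +(a-b)\ts R^{\pa}_{\! \cA}(\hat\nu) ,
\end{equation*}
whose second term is controlled by $\lvert a-b\rvert=\bigl\lvert\|\mu\|-\|\nu\|\bigr\rvert\leqslant\|\mu-\nu\|$ together with $\|R^{\pa}_{\! \cA}(\hat\nu)\|\leqslant 1$ from (3).

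For the first term, the normalisation is trivial on the sphere, so $R^{\pa}_{\! \cA}(\hat\mu)-R^{\pa}_{\! \cA}(\hat\nu)=\bigotimes_{i}\hat\mu_{i}-\bigotimes_{i}\hat\nu_{i}$, and a telescoping identity in the tensor factors, together with $\|\hat\mu_{i}\|,\|\hat\nu_{i}\|\leqslant 1$ and $\|\hat\mu_{i}-\hat\nu_{i}\|\leqslant\|\hat\mu-\hat\nu\|$, yields the bound $r\ts\|\hat\mu-\hat\nu\|$. It then remains to compare the spherical distance with the ambient one: writing $\hat\mu-\hat\nu=\tfrac{1}{a}(\mu-\nu)+\tfrac{b-a}{ab}\,\nu$ and estimating each summand gives $\|\hat\mu-\hat\nu\|\leqslant\tfrac{2}{a}\|\mu-\nu\|$, so the awkward factor $a$ cancels and the first term is bounded by $2r\ts\|\mu-\nu\|$. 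Adding the two contributions produces $L\leqslant 2r+1=2\lvert\cA\rvert+1$, as claimed. The only subtlety to keep in mind is the bookkeeping near the origin, which is exactly why the homogeneous rescaling, rather than a naive difference quotient, is the correct device here.
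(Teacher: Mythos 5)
Your proposal is correct and follows essentially the same route as the paper: for part (2) you use the identical decomposition $R^{\pa}_{\!\cA}(\mu)-R^{\pa}_{\!\cA}(\nu)=\|\mu\|\bigl(R^{\pa}_{\!\cA}(\hat\mu)-R^{\pa}_{\!\cA}(\hat\nu)\bigr)+\bigl(\|\mu\|-\|\nu\|\bigr)R^{\pa}_{\!\cA}(\hat\nu)$, the same telescoping (inductive) tensor-product estimate on unit-norm measures, and the same comparison $\|\hat\mu-\hat\nu\|\leqslant \tfrac{2}{\|\mu\|}\|\mu-\nu\|$, arriving at $L\leqslant 2\lvert\cA\rvert+1$ exactly as in the paper (your exact cancellation of the factor $a$ replaces the paper's $\min$/$\max$ bookkeeping, a cosmetic difference only). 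The only other deviation is that you prove parts (3) and (5) from the standard total-variation facts directly, where the paper cites its reference, which is perfectly fine.
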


\begin{proof}
  Claims (1) and (4) are elementary consequences of the definition.
  Claim (3) follows easily from the arguments used in
  \cite[Sec.~3.1]{BB}, while (5) is clear from the observation that
  $\| \nu \| = \nu (X)$ for $\nu \ge 0$, which then implies $\|
  R^{\pa}_{\! \cA} (\nu)\| = \| \nu \|$ by standard arguments; compare
  \cite[Fact~2]{BB}.

  It remains to prove (2).  Let $\cA = \{ A_{1}, \dots , A_{r}\}$ be a
  partition with $r$ parts, where $1 \leqslant r \leqslant n$. Then,
  one can show inductively in $r$ that
\begin{equation}\label{eq:Lip-estimate}
  \Bigl\| \bigotimes_{i=1}^{r} (\pi^{\pa}_{\! A_{i}}\nts .\mu) -
  \bigotimes_{j=1}^{r} (\pi^{\pa}_{\! A_{j}}\nts .\nu) 
      \Bigr\| \, \leqslant \,
  \| \mu - \nu \|\, \sum_{k=0}^{r-1} \| \mu \|^{k} \| \nu \|^{r-1-k},
\end{equation}
for arbitrary $\mu,\nu \in \cM(X)$. If one of them is the zero
measure, the Lipschitz estimate is a consequence of claim (3).

Let now $\mu,\nu\in\cM(X)$ both be non-zero, and assume that
$\| \mu \| \leqslant \| \nu \|$. Using Eq.~\eqref{eq:Lip-estimate} 
together with positive homogeneity of $R^{\pa}_{\nts \cA}$, we have
\begin{align} 
  \nonumber
  \bigl\| R^{\pa}_{\!\cA} (\mu) - R^{\pa}_{\!\cA} (\nu) \bigr\| \,& = \,
  \Bigl\| \| \mu \| \Bigl(\bigotimes_{i=1}^{r} \pi^{\pa}_{\! A_{i}} . 
   \frac{\mu}{\|\mu\|} - \bigotimes_{j=1}^{r} \pi^{\pa}_{\! A_{j}} . 
   \frac{\nu}{\|\nu\|}\Bigr)
  + \bigl(\|\mu\| - \|\nu\| \bigr) \bigotimes_{k=1}^{r}
   \pi^{\pa}_{\! A_{k}} . \frac{\nu}{\|\nu\|} \Bigr\| \\[1mm]
   & \leqslant \, 
    r\, \min \bigl( \|\mu\| ,\| \nu \| \bigr) \ts \Bigl\| 
    \frac{\mu}{\|\mu\|} - \frac{\nu}{\|\nu\|}\Bigr\| 
    \, + \, \| \mu - \nu \|  \ts ,  \label{align:min}
\end{align}
where we used that $\mu/\|\mu\|$ and $\nu/\|\nu\|$ are measures of
norm $1$.  Next, one has
\[
   \Bigl\| \frac{\mu}{\|\mu\|} - 
    \frac{\nu}{\|\nu\|}\Bigr\| \, \leqslant \,
    \frac{2\, \| \mu - \nu \|}{\max \bigl( \|\mu\| , \| \nu \| \bigr)} 
\]
which, on inserting into Eq.~\eqref{align:min}, gives the inequality
\[
   \bigl\| R^{\pa}_{\!\cA} (\mu) - R^{\pa}_{\!\cA} (\nu) \bigr\|
   \, \leqslant \, (2r+1)\, \| \mu - \nu \|
\]
from which our claim follows, with $L \leqslant 2 r + 1$.
\end{proof}

\begin{remark}
  Let us mention that there is an alternative way to see the Lipschitz
  property, at least for finite $X$.  This is because the dynamics may
  then be reformulated in terms of a chemical reaction system. This is
  a large class of models, for which a substantial body of theory is
  available; see \cite{Son} for a review. In particular, the Lipschitz
  property applies under very general conditions, which are satisfied
  in our case.  \exend
\end{remark}

The estimate used above is rather coarse, but suffices for our needs
here. When using the invariance of $\cP (X)$, one simply employs
Eq.~\eqref{eq:Lip-estimate} from the last proof to establish the
following consequence.

\begin{coro}
  On\/ $\cP (X)$, with\/ $\cA\in\PP(S)$, the 
  recombinator\/ $R^{\pa}_{\! \cA}$ is 
  Lipschitz with $L\leqslant \lvert \cA \rvert$.  \qed
\end{coro}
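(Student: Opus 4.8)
The plan is to exploit the fact that on $\cP(X)$ the troublesome normalisation factor $\|\nu\|^{r-1}$ appearing in Eq.~\eqref{eq:def-recomb} becomes trivial. Since every $\mu,\nu\in\cP(X)$ satisfies $\|\mu\|=\|\nu\|=1$, the recombinator reduces on probability measures to the plain tensor product of marginals, namely $R^{\pa}_{\!\cA}(\nu)=\bigotimes_{i=1}^{r}(\pi^{\pa}_{A_i}.\nu)$ with $r=|\cA|$. Consequently the difference $R^{\pa}_{\!\cA}(\mu)-R^{\pa}_{\!\cA}(\nu)$ is exactly the expression whose norm is bounded by the telescoping estimate \eqref{eq:Lip-estimate} established in the previous proof, so no separate argument is needed.

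First I would invoke \eqref{eq:Lip-estimate} directly for these two probability measures. Its right-hand side is $\|\mu-\nu\|\sum_{k=0}^{r-1}\|\mu\|^{k}\|\nu\|^{r-1-k}$, and since $\|\mu\|=\|\nu\|=1$ every summand equals $1$, so the sum collapses to $r=|\cA|$. This yields immediately
\[
  \bigl\| R^{\pa}_{\!\cA}(\mu) - R^{\pa}_{\!\cA}(\nu) \bigr\|
  \,\leqslant\, |\cA|\, \| \mu - \nu \| ,
\]
which is precisely the claimed bound.

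There is essentially no obstacle here: the entire content of the corollary is that restricting to $\cP(X)$ removes the normalisation, and with it the extra factor of $2$ that entered the proof of Proposition~\ref{prop:gen-props}(2) through differentiating the quotients $\mu/\|\mu\|$ and $\nu/\|\nu\|$. The only point worth recording is that $\cP(X)$ is invariant under $R^{\pa}_{\!\cA}$, so that the statement makes sense as an assertion about a self-map; this is exactly Proposition~\ref{prop:gen-props}(5), which I would cite. Beyond what \eqref{eq:Lip-estimate} already delivers, no induction or further estimation is required.
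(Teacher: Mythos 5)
Your proof is correct and is exactly the paper's intended argument: the text preceding the corollary states that "one simply employs Eq.~\eqref{eq:Lip-estimate} from the last proof," which for probability measures (where the normalisation factor in Eq.~\eqref{eq:def-recomb} is trivial and each summand $\|\mu\|^{k}\|\nu\|^{r-1-k}$ equals $1$) gives the bound $|\cA|\,\|\mu-\nu\|$ at once. Your additional remark citing Proposition~\ref{prop:gen-props}(5) for the invariance of $\cP(X)$ is a sensible bit of bookkeeping that the paper leaves implicit.
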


Before we embark on the recombination equation and its solution, we
need to establish one technical result on the relation between
recombinators and projectors to subsystems, as defined by non-empty
sets $U\nts\subset S$. Here, the system on which
a recombinator acts is marked by an upper index $S$ (for the full
system) or $U$ (for the subsystem). Later, we will drop this index 
whenever the meaning is unambiguous.

\begin{lemma}\label{lem:technical}
  Let\/ $S$ be a finite set as above, and\/ $\cA = \{ A_{1}, \dots ,
  A_{r} \} \in \PP(S)$ an arbitrary partition. If\/ $U\nts\subseteq S$
  is non-empty and $\omega\in\cM_{+} (X)$, one has
\[
     \pi^{\pa}_{U} . \bigl( R^{\ts S}_{\nts \cA}  (\omega) \bigr) \, = \, 
     R^{\ts U}_{\nts \cA|^{\pa}_{U}} (\pi^{\pa}_{U}  . \ts \omega) \ts ,
\]   
where\/ $\cA|^{\pa}_{U} \in \PP(U)$ and the upper index of a
recombinator indicates on which measure space it acts.
\end{lemma}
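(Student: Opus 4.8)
The plan is to prove the identity by unravelling both sides in terms of the tensor-product structure and comparing them block by block. The essential observation is that projection $\pi^{\pa}_U$ commutes with the relevant tensor structure in a way dictated by how $U$ intersects the blocks of $\cA$. First I would handle the trivial case $\omega = 0$ (both sides vanish by definition of the recombinator on the zero measure and by linearity of $\pi^{\pa}_U$), and then assume $\omega \neq 0$ so that the normalisation factors make sense.

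For $\omega \neq 0$, I would write $\cA = \{A_1, \dots, A_r\}$ and note that the parts of $\cA|^{\pa}_U$ are exactly the non-empty sets $A_i \cap U$; reindexing, say those are $A_{i} \cap U$ for $i$ in some index set, with the remaining $A_i \cap U$ empty. The key technical fact I would isolate and use is that the marginal of a tensor product factorises coordinate-wise: for the measure $R^{\ts S}_{\nts\cA}(\omega) = \frac{1}{\|\omega\|^{r-1}} \bigotimes_{i=1}^{r} (\pi^{\pa}_{A_i} . \omega)$, projecting to $X^{\pa}_U$ simply projects each factor $\pi^{\pa}_{A_i} . \omega$ further to its $U$-part, which is $\pi^{\pa}_{A_i \cap U} . \omega$, and discards the factors where $A_i \cap U = \varnothing$ after contracting them to the total mass. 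Concretely, marginalising a product measure to a subset of coordinates yields the product of the corresponding marginals, each empty-intersection factor contributing a scalar $\|\pi^{\pa}_{A_i} . \omega\| = \|\omega\|$ (using that $\omega$ is positive, so marginals preserve the norm, by Proposition~\ref{prop:gen-props}(5) applied to the relevant projection, or directly since $\|\pi^{\pa}_{A_i}.\omega\| = \omega(X) = \|\omega\|$).

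With this factorisation in hand, I would assemble the count of surviving factors: if $\cA|^{\pa}_U$ has $s$ parts, then exactly $s$ of the intersections $A_i \cap U$ are non-empty, so $\pi^{\pa}_U . R^{\ts S}_{\nts\cA}(\omega)$ equals $\frac{1}{\|\omega\|^{r-1}} \bigotimes_{A_i \cap U \neq \varnothing} (\pi^{\pa}_{A_i\cap U} . \omega) \cdot \|\omega\|^{r-s}$, where the $\|\omega\|^{r-s}$ collects the scalar contributions of the $r-s$ empty-intersection factors. This simplifies to $\frac{1}{\|\omega\|^{s-1}} \bigotimes_{A_i \cap U \neq \varnothing} (\pi^{\pa}_{A_i \cap U} . \omega)$. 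On the other side, since $\pi^{\pa}_U . \omega \in \cM_{+}(X^{\pa}_U)$ has the same norm $\|\omega\|$ (again as a marginal of a positive measure), and its further projection to $A_i \cap U$ is exactly $\pi^{\pa}_{A_i \cap U} . \omega$ by functoriality of restriction ($\pi^{\pa}_{A_i \cap U} . (\pi^{\pa}_U . \omega) = \pi^{\pa}_{A_i \cap U} . \omega$ for $A_i \cap U \subseteq U$), the definition of $R^{\ts U}_{\nts\cA|^{\pa}_U}$ applied to $\pi^{\pa}_U . \omega$ gives precisely the same expression. Matching the two yields the claim.

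The main obstacle, and the step deserving the most care, is the bookkeeping of the site-ordering and the scalar factors: I must verify that the tensor factors on both sides are ordered consistently (both inherit the site ordering of $S$, hence agree) and that the exponent arithmetic is exact — in particular that the $r-s$ empty-block factors each contribute exactly one power of $\|\omega\|$, turning the prefactor $\|\omega\|^{-(r-1)}$ into $\|\omega\|^{-(s-1)}$. The positivity hypothesis $\omega \in \cM_{+}(X)$ is what makes the norm-preservation of marginals hold (so each empty factor is genuinely $\|\omega\|$ rather than an uncontrolled total variation), and I would flag this as the place where positivity is essential rather than merely convenient.
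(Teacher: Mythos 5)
Your proposal is correct and follows essentially the same route as the paper: both compute the marginal of the tensor product block by block, observing that each block $A_i$ with $A_i \cap U = \varnothing$ contributes a scalar factor $\| \omega \|$ (this is where positivity enters), so that the prefactor $\| \omega \|^{-(r-1)}$ turns into $\| \omega \|^{-(s-1)}$ and the surviving factors $\pi^{\pa}_{A_i \cap U} . \ts \omega$ match the definition of $R^{\ts U}_{\nts \cA|^{\pa}_{U}} (\pi^{\pa}_{U} . \ts \omega)$. The only difference is one of rigour at the foundation: the ``key technical fact'' you isolate (that marginalisation factorises over the tensor product) is exactly what the paper proves, by evaluating both measures on rectangular sets $E_{1} \times \dots \times E_{s}$ and then invoking the unique extension of Baire measures to regular Borel measures --- a step worth keeping in mind on general locally compact spaces, where equality on product sets does not automatically give equality of Borel measures.
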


\begin{proof}
  We shall show the claimed identity by verifying it for certain
  `rectangular' measurable sets that suffice for the equality of the
  two measures as Baire measures, and then rely on the unique
  extension of Baire measures to regular Borel measures; compare the
  discussion around \cite[Fact~1]{BB}. To this end, let
  $\cA\in\ts\PP(S)$ be given as $\cA = \{ A_{1}, \dots , A_{r} \}$,
  and write $\cA|^{\pa}_{U} \in \PP(U)$ as $\cA|^{\pa}_{U} = \{ V_{1},
  \dots , V_{s} \}$, where $s\leqslant r$. Each $V_i$ is contained in
  precisely one part of $\cA$.  Without loss of generality, we may
  thus assume that $V_{i} = A_{i} \cap U$ for $1\leqslant i \leqslant
  s$, while $A_{j} \cap U = \varnothing$ for all $j > s$ (when $r=s$,
  this case does not occur).

  Now, let $E_{i}$ be a Borel set in $X^{\pa}_{\nts V_{i}} =
  \pi^{\pa}_{V_i} (X)$, and consider the corresponding `rectangular'
  set $E = E_{1} \times \dots \times E_{s} \subseteq
  X_{U}$, where we again assume that the product observes proper site
  ordering.  When evaluated on $E$, the right-hand side of our claim
  gives
\[
\begin{split}
   \bigl(R^{\ts U}_{\cA|^{\pa}_{U}} (\pi^{\pa}_{U} . \ts  \omega) \bigr)
   ( E ) \, & = \, \frac{1}{\| \pi^{\pa}_{U}  .\ts \omega \|^{s-1}} 
   \bigotimes_{i=1}^{s} \bigl(\pi^{(U)}_{V_i} \! . \ts 
    (\pi^{\pa}_{U}  . \ts \omega) \bigr)
   ( E_{1} \times \dots \times E_{s} ) \\
   & = \, \frac{1}{\| \omega \|^{s-1}} 
   \bigotimes_{i=1}^{s} (\pi^{\pa}_{V_i} \nts . \ts \omega) 
   ( E_{1} \times \dots \times E_{s} ) \ts ,
\end{split}   
\]
where we have used that $\| \pi^{\pa}_{\nts U} . \ts \omega \| = \|
\omega \|$ and where $\pi^{(U)}$ denotes a projector that is only
defined on the subspace $X^{\pa}_{U}$.  This is now to be compared
with the evaluation of the left hand side of the claim, which gives
(with $U^{\mathsf{c}} = S \setminus U$)
\[
\begin{split}
     \bigl( \pi^{\pa}_{\nts U} .\ts R^{S}_{\nts\cA} 
     (\omega)\bigr) (E) \, & = \, 
     R^{S}_{\nts\cA} (\omega) \bigl( \pi^{-1}_{U} (E) \bigr) 
     \, = \,  \frac{1}{\| \omega \|^{r-1}}
        \bigotimes_{i=1}^{r} \, 
       ( \pi^{\pa}_{\! A_i}\nts . \ts \omega ) 
     ( E \times X^{\pa}_{U_{\phantom{I}}^{\mathsf{c}}} ) \\[1mm]
     & = \, \frac{1}{\| \omega \|^{r-1}} \,
      \Bigl( \bigotimes_{i=1}^{s}
      (\pi^{\pa}_{\! A_i} \nts . \ts \omega )\nts \Bigr) 
       \otimes \Bigl( \bigotimes_{j=s+1}^{r}
      (\pi^{\pa}_{\! A_j} \nts . \ts \omega)\nts \Bigr) 
         ( E_{1} \times \dots \times E_{s} \times 
         X^{\pa}_{U_{\phantom{I}}^{\mathsf{c}}} ) \\[1mm]
     & = \, \frac{1}{\| \omega \|^{r-1}}  \biggl( \, \prod_{i=1}^{s}
         \omega \bigl( \pi^{-1}_{\! A_i} 
        ( E_{i} \times \nts 
        X^{\pa}_{\! A_{i} \setminus V_{i}})
         \bigr)\! \biggr) \biggl( \, \prod_{j=s+1}^{r} \omega
         \bigl( \pi^{-1}_{\! A_{j}} (X^{\pa}_{\! A_j})\bigr)\! 
         \biggr) \\[1mm]
     & =    \, \frac{1}{\| \omega \|^{r-1}}  \biggl( \, \prod_{i=1}^{s}
         \omega \bigl( \pi^{-1}_{V_i} (E_{i}) \bigr)\! \biggr) 
         \biggl( \, \prod_{j=s+1}^{r} \omega(X)\! \biggr) \\[1mm]
      & = \,    \frac{\|\omega \|^{r-s}}{\| \omega \|^{r-1}} \;
         \prod_{i=1}^{s} \bigl( \pi^{\pa}_{V_i} 
         \nts . \ts \omega \bigr) (E_{i})
         \, = \, \frac{1}{\| \omega \|^{s-1}}  \bigotimes_{i=1}^{s}
          \bigl( \pi^{\pa}_{V_{i}} \nts . \ts \omega\bigr)
            ( E_{1} \times \dots \times E_{s}) \ts , 
\end{split}
\]
which agrees with our previous expression and thus proves the claim.
\end{proof}

Lemma ~\ref{lem:technical} has some interesting consequences for the
structure of recombinators. We only state the result and omit the
proof, as it is analogous to the corresponding result in \cite{BB}.

\begin{prop}
    Let\/ $\cA,\cB\in\PP(S)$. On\/ $\cM_{+} (X)$, the corresponding
    recombinators satisfy
\[
    R^{\pa}_{\nts \cA} R^{\pa}_{\cB}  \, = \, R^{\pa}_{\nts \cA \wedge \cB} \ts .
\]
  In particular, each recombinator is an idempotent and any two
  recombinators commute.    \qed
\end{prop}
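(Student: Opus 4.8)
The plan is to evaluate $R^{\pa}_{\nts\cA}\bigl(R^{\pa}_{\cB}(\nu)\bigr)$ directly on an arbitrary nonzero $\nu\in\cM_{+}(X)$ by computing the marginals of $R^{\pa}_{\cB}(\nu)$ on the blocks of $\cA$, for which Lemma~\ref{lem:technical} is the decisive tool. (The case $\nu=0$ is immediate, since $R^{\pa}_{\cB}(0)=0$ and then $R^{\pa}_{\nts\cA}(0)=0$ by definition, matching $R^{\pa}_{\nts\cA\wedge\cB}(0)=0$.) Writing $\cA=\{A_{1},\dots,A_{r}\}$, the definition \eqref{eq:def-recomb} gives
\[
   R^{\pa}_{\nts\cA}\bigl(R^{\pa}_{\cB}(\nu)\bigr) \, = \,
   \frac{1}{\|R^{\pa}_{\cB}(\nu)\|^{r-1}} \bigotimes_{i=1}^{r}
   \pi^{\pa}_{A_{i}} . \bigl(R^{\pa}_{\cB}(\nu)\bigr) \ts ,
\]
and since $R^{\pa}_{\cB}$ preserves the norm of positive measures by Proposition~\ref{prop:gen-props}(5), the prefactor simplifies to $\|\nu\|^{-(r-1)}$.

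Next I would apply Lemma~\ref{lem:technical} with $U=A_{i}$ to each factor, turning $\pi^{\pa}_{A_{i}} . R^{\pa}_{\cB}(\nu)$ into the subsystem recombinator $R^{A_{i}}_{\cB|^{\pa}_{A_{i}}}(\pi^{\pa}_{A_{i}} . \nu)$. Expanding this by \eqref{eq:def-recomb} once more, its blocks are exactly the nonempty sets $A_{i}\cap B_{j}$, and one uses the composition identity $\pi^{(A_{i})}_{A_{i}\cap B_{j}} . (\pi^{\pa}_{A_{i}} . \nu)=\pi^{\pa}_{A_{i}\cap B_{j}} . \nu$ together with $\|\pi^{\pa}_{A_{i}} . \nu\|=\|\nu\|$. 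The crucial observation is that the nonempty intersections $A_{i}\cap B_{j}$, ranging over all $i$ and $j$, are precisely the blocks of $\cA\wedge\cB$; writing $s_{i}=\lvert\cB|^{\pa}_{A_{i}}\rvert$ and $m=\sum_{i}s_{i}$ for their total number, the overall normalising power of $\|\nu\|$ collected from all expansions is $(r-1)+\sum_{i=1}^{r}(s_{i}-1)=m-1$.

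Assembling the factors then yields
\[
   R^{\pa}_{\nts\cA}\bigl(R^{\pa}_{\cB}(\nu)\bigr) \, = \,
   \frac{1}{\|\nu\|^{m-1}} \bigotimes_{C\in\cA\wedge\cB}
   \bigl(\pi^{\pa}_{C} . \nu\bigr) \, = \, R^{\pa}_{\nts\cA\wedge\cB}(\nu) \ts ,
\]
which is the asserted identity. The two remaining claims are then purely formal: idempotence follows from $\cA\wedge\cA=\cA$, and the commutativity of any two recombinators from the symmetry $\cA\wedge\cB=\cB\wedge\cA$ of the joint refinement.

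I expect the only point requiring genuine care to be the bookkeeping of the site ordering of the tensor factors. Grouping the marginals first by $\cA$-blocks and then by their $\cB$-refinement produces the factors $\pi^{\pa}_{A_{i}\cap B_{j}} . \nu$ in a nested order that need not match the site order used to display $R^{\pa}_{\nts\cA\wedge\cB}(\nu)$. Since these factors live on pairwise disjoint coordinate subspaces, however, the product measure is independent of the order in which they are combined, so one only has to identify the collection $\{A_{i}\cap B_{j}\ne\varnothing\}$ with the block set of $\cA\wedge\cB$ in order to rewrite the expression in proper site order. This structural role of Lemma~\ref{lem:technical} is exactly why the argument runs parallel to the corresponding one in \cite{BB}.
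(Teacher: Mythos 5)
Your proof is correct and follows precisely the route the paper intends: the paper omits its own proof (referring to the analogous argument in \cite{BB}), but it introduces the proposition as a consequence of Lemma~\ref{lem:technical}, and your computation --- marginalising $R^{\pa}_{\cB}(\nu)$ over the blocks of $\cA$ via that lemma, using norm preservation of positive measures from Proposition~\ref{prop:gen-props}, and identifying the nonempty intersections $A_{i}\cap B_{j}$ with the blocks of $\cA\wedge\cB$ --- is exactly that argument. The bookkeeping of the normalising powers of $\|\nu\|$ and the remark on site ordering are also handled correctly, so nothing is missing.
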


Below, we shall also need probability vectors on $\PP = \PP(S)$, which
form the compact space $\cP(\PP)$. An interesting subclass of them can
be constructed as follows.  Let $f \! : \, \PP \longrightarrow [0,1]$
be a function on the lattice $\PP$, and consider
\begin{equation}\label{eq:c-def}
    c (\cA) \, :=  \sum_{\substack{\GG\subseteq \PP 
   \\ \bigwedge\! \GG = \cA}}\, \prod_{\cB\in\GG}
      \bigl(1-f(\cB\ts )\bigr)
      \prod_{\cC\in \GGc} \! f(\cC)
\end{equation}
for $\cA\in\ts\PP$, which clearly satisfies $c (\cA)\geqslant 0$.
Moreover, one has
\begin{equation}\label{eq:c-normed}
   \sum_{\cA\in\ts\PP} c (\cA) \, =  \sum_{\GG\subseteq\PP}\,
   \prod_{\cB\in\GG} \bigl(1-f(\cB\ts )\bigr) \prod_{\cC\in \GGc} f(\cC)
   \, =  \prod_{\cA\in\ts\PP} \bigl( (1-f(\cA) ) + f(\cA)\bigr)
   \, = \, 1 \ts ,
\end{equation}
wherefore $c\in\cP(\PP(S))$, which means that $c$ is a probability
vector on the lattice $\PP(S)$.

At this point, we have gathered the core material to embark on the
discussion of the recombination equation and its properties.

\section{The general recombination equation}
\label{sec:gen-reco}

With the tools at hand, it is now rather obvious how to generalise the
`mass balance' equation \eqref{eq:ode-intro} of the Introduction to
our measure-theoretic setting. Within the Banach space $(\cM(X),
\|.\|)$, we thus consider the nonlinear ODE
\begin{equation}\label{eq:reco-eq}
   \dot{\omega}^{\pa}_{t} \, = \!
   \sum_{\cA\in \PP (S)} \! \! \varrho(\cA) \ts
   \bigl( R^{\pa}_{\nts\cA} - \pmax \bigr) (\omega^{\pa}_{t})
\end{equation}
with non-negative numbers $\varrho(\cA)$ that have the meaning of
\emph{recombination rates} in our context. They are written in this
way because we consider $\varrho \! : \, \PP (S) \longrightarrow \RR$
as an element of the \emph{M\"{o}bius algebra} over $\RR$; see
\cite{Aigner,SD} for background.  We will usually assume that an
initial condition $\omega^{\pa}_{0} \in \cM(X)$ for $t=0$ is given for
the ODE \eqref{eq:reco-eq}, and then speak of the corresponding
\emph{Cauchy problem} (or initial value problem).  Biologically,
Eq.~\eqref{eq:reco-eq} describes the change in composition of a
population in which offspring is produced by piecing together
sequences from various parents according to the collection of the
$\varrho(\cA)$ with $\cA \in \PP (S)$. More precisely, with rate
$\varrho(\cA)$ for every $\cA = \{A_1, \ldots, A_r\}$, the sites in
$A_1$ inherited from parent $1$ are reassociated with the sites in
$A_2$ inherited from parent $2$, and with the sites in $A_i$ inherited
from parent $i$ for $2<i\le r$.

\begin{remark}\label{rem:balance}
  Since $R^{\pa}_{\pmax} = \pmax$, the value of $\varrho (\pmax)$ is
  immaterial, and the corresponding term could clearly be omitted on
  the right-hand side of Eq.~\eqref{eq:reco-eq}. We nevertheless keep
  it here, as it will become useful in connection with the reduction
  to subsystems.
   
   Another way to write the ODE is
\[
    \dot{\omega}^{\pa}_{t} \, = \, -\rtot{} \, \omega^{\pa}_{t}
    \, + \! \! \sum_{\cA\in\PP (S)} \! \! \varrho(\cA) \, R^{\pa}_{\nts \cA}
    (\omega^{\pa}_{t} ) \, = \, - \bigl(\rtot{} - \varrho(\pmax)\bigr)
    \, \omega^{\pa}_{t}  \, +  \sum_{\cA \prec \pmax} 
    \varrho(\cA) \, R^{\pa}_{\nts \cA} (\omega^{\pa}_{t} )
\]   
   with $\rtot{} := \sum_{\cA\in\PP (S)} \varrho (\cA)$ being the total
   recombination rate. 
\exend
\end{remark}

With $\varPhi := \sum_{\cA\in \PP (S)} \varrho(\cA) \ts
   \bigl( R^{\pa}_{\nts\cA} - \pmax \bigr)$, we can now simply write
\begin{equation}\label{eq:short}
     \dot{\omega}^{\pa}_{t} \, = \, \varPhi (\omega^{\pa}_{t}) \ts ,
\end{equation}
but we must keep in mind that $\varPhi$ is a nonlinear operator.
Nevertheless, one has the following basic result; compare \cite{Amann}
for background on ODEs on Banach spaces.

\begin{prop}\label{prop:gensol}
  Let\/ $S$ be a finite set and\/ $X$ the corresponding locally
  compact product space as introduced above. Then, the Cauchy problem
  of Eq.~\eqref{eq:reco-eq} with initial condition\/ $\omega^{\pa}_{0}
  \in \cM (X)$ has a unique solution.  Moreover, the cone\/ $\cM_{+}
  (X)$ is forward invariant, and the flow is norm-preserving on\/
  $\cM_{+} (X)$.  In particular, $ \cP (X)$ is forward invariant under
  the flow.
\end{prop}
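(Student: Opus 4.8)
The plan is to establish the four assertions in turn, relying throughout on the properties of recombinators collected in Proposition~\ref{prop:gen-props}.

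For existence and uniqueness, the key point is that $\varPhi$ is globally Lipschitz on the Banach space $\bigl(\cM(X),\|.\|\bigr)$. Indeed, $\varPhi = \sum_{\cA\in\PP(S)} \varrho(\cA)\bigl(R^{\pa}_{\nts\cA}-\pmax\bigr)$ is a finite linear combination, with non-negative coefficients, of the globally Lipschitz maps $R^{\pa}_{\nts\cA}$ (Proposition~\ref{prop:gen-props}(2)) and of the identity $\pmax = R^{\pa}_{\pmax}$, which is Lipschitz with constant $1$. Hence $\varPhi$ is globally Lipschitz, with constant bounded by $\sum_{\cA} \varrho(\cA)\bigl(2\lvert\cA\rvert+2\bigr)$. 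The Picard--Lindel\"of theorem for ODEs on Banach spaces (see \cite{Amann}) then yields a unique solution of the Cauchy problem; since the Lipschitz bound is global, this solution exists for all $t\geqslant 0$ (indeed for all $t\in\RR$).

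For the forward invariance of $\cM_{+}(X)$, I would recast the ODE in the integrated form suggested by Remark~\ref{rem:balance}. Writing $G := \sum_{\cA} \varrho(\cA)\ts R^{\pa}_{\nts\cA}$, so that $\varPhi(\omega) = -\rtot{}\,\omega + G(\omega)$, the solution satisfies the variation-of-constants identity
\begin{equation*}
   \omega^{\pa}_{t} \, = \, \ee^{-\rtot{} t}\,\omega^{\pa}_{0}
   \, + \int_{0}^{t} \ee^{-\rtot{}(t-s)}\, G(\omega^{\pa}_{s}) \dd s \ts .
\end{equation*}
Here $G$ maps $\cM_{+}(X)$ into itself, because each $R^{\pa}_{\nts\cA}$ does (Proposition~\ref{prop:gen-props}(4)) and the $\varrho(\cA)$ are non-negative. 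Running the associated Picard iteration from the starting function $t\mapsto \ee^{-\rtot{}t}\omega^{\pa}_{0}$, which lies in $\cM_{+}(X)$ whenever $\omega^{\pa}_{0}\in\cM_{+}(X)$, one checks inductively that every iterate stays in the closed cone: the scalar factors $\ee^{-\rtot{}t}$ and $\ee^{-\rtot{}(t-s)}$ are positive, $G$ preserves positivity, and the (Riemann) integral of a map with values in the closed convex cone $\cM_{+}(X)$ again lies in $\cM_{+}(X)$. Passing to the limit and using that $\cM_{+}(X)$ is closed, the solution itself lies in $\cM_{+}(X)$ for all $t\geqslant 0$. (Equivalently, one could verify the subtangential Nagumo condition through the forward Euler step $\omega\mapsto(1-\tau\rtot{})\,\omega+\tau\,G(\omega)$, which is in $\cM_{+}(X)$ for small $\tau>0$.) I expect this to be the main obstacle, since it is where one must argue carefully that the destabilising linear term $-\rtot{}\,\omega^{\pa}_{t}$ cannot drive the measure out of the cone; the exponential weight in the variation-of-constants formula is precisely what resolves this.

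For norm-preservation, the plan is to use that $\|\omega\| = \omega(X)$ for $\omega\in\cM_{+}(X)$, which is now available since the trajectory stays in the cone. Evaluation on the fixed Borel set $X$ is a bounded linear functional on $\cM(X)$, so $t\mapsto \omega^{\pa}_{t}(X)$ is differentiable with derivative $\dot{\omega}^{\pa}_{t}(X)$. Applying Eq.~\eqref{eq:reco-eq} together with Proposition~\ref{prop:gen-props}(4) and~(5), each summand contributes $R^{\pa}_{\nts\cA}(\omega^{\pa}_{t})(X) - \omega^{\pa}_{t}(X) = \|\omega^{\pa}_{t}\| - \|\omega^{\pa}_{t}\| = 0$, whence $\frac{\mathrm{d}}{\mathrm{d} t}\,\omega^{\pa}_{t}(X) = 0$ and $\|\omega^{\pa}_{t}\| = \|\omega^{\pa}_{0}\|$ for all $t\geqslant 0$. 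Forward invariance of $\cP(X)$ is then immediate: if $\omega^{\pa}_{0}\in\cP(X)$, then $\omega^{\pa}_{0}\in\cM_{+}(X)$ with norm $1$, so by the two preceding steps $\omega^{\pa}_{t}\in\cM_{+}(X)$ with $\|\omega^{\pa}_{t}\| = 1$, i.e.\ $\omega^{\pa}_{t}\in\cP(X)$.
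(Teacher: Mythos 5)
Your proof is correct, and at the only step where real work is needed -- the forward invariance of $\cM_{+}(X)$ -- it takes a genuinely different route from the paper. Both arguments start from the same splitting $\varPhi = -\rtot{}\,\mathrm{id} + G$ with $G := \sum_{\cA} \varrho(\cA)\, R^{\pa}_{\nts\cA}$ cone-preserving (this is the content of Remark~\ref{rem:balance}), but the paper then argues \emph{locally at the boundary} of the cone: for $\nu \in \cM_{+}(X)$ and any Borel set $E$ with $\nu(E) = 0$ one has $\bigl(\varPhi(\nu)\bigr)(E) \geqslant 0$, and positive invariance follows from the abstract invariance theorem of \cite[Thm.~16.5 and Rem.~16.6]{Amann}; this is exactly the subtangential condition you relegate to your parenthetical Euler-step remark. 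Your main argument is instead constructive: recast the Cauchy problem as the variation-of-constants integral equation and observe that the Picard iteration preserves the closed cone, since the exponential weights are positive, $G$ maps the cone to itself, and Riemann integrals of cone-valued continuous curves stay in the closed convex cone, so the solution is a norm-limit of cone-valued curves. Both are valid. Your version is self-contained (no appeal to the Nagumo/Amann invariance machinery) and yields global existence in the same breath; its only cost is the routine bookkeeping you suppress, namely that the contraction argument runs on a short interval $[0,\tau]$ and must be restarted at $\omega^{\pa}_{\tau} \in \cM_{+}(X)$ to cover all $t \geqslant 0$, which is standard. The remaining parts of your proposal -- uniqueness from the global Lipschitz property of $\varPhi$, norm preservation by differentiating $t \mapsto \omega^{\pa}_{t}(X)$ and using that recombinators preserve the norm of positive measures, and the resulting invariance of $\cP(X)$ -- coincide with the paper's proof.
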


\begin{proof}
  By part (2) of Proposition~\ref{prop:gen-props}, all $R^{\pa}_{\nts
    \cA}$ are globally Lipschitz on $\cM(X)$, which is then also true
  of $\varPhi$. Therefore, the uniqueness statement for the Cauchy
  problem is clear.

  Next, let $\nu\in\cM_{+} (X)$ and consider an arbitrary Borel set
  $E\subset X$ with $\nu (E)=0$. Then, with $\rtot{} =
  \sum_{\cA\in\ts\PP(S)}\varrho(\cA)$ as above in
  Remark~\ref{rem:balance}, we have
\[
     \bigl( \varPhi (\nu) \bigr) (E) \, = \, - \rtot{} \, 
      \nu(E) \, + \! \! \sum_{\cA\in\ts\PP(S)} \!\! \varrho(\cA) \ts 
      \bigl( R^{\pa}_{\nts \cA} (\nu)\bigr)
      (E) \, \geqslant \, 0
\]
because $\nu (E)=0$ by assumption and all other terms are non-negative
as a result of part (4) of Proposition~\ref{prop:gen-props} together
with $\varrho (\cA)\geqslant 0$ for all $\cA\in\ts\PP(S)$.  Positive
invariance of the closed cone $\cM_{+} (X)$ now follows from a classic
continuity argument; see \cite[Thm.~16.5 and Rem.~16.6]{Amann} for
details and \cite[Thm.~1]{BB} for the analogous argument in the
single-crossover model.

When $\nu$ is a positive measure, one has $\nu (X) = \| \nu \| = \|
R^{\pa}_{\nts \cA} (\nu) \| = \bigl( R^{\pa}_{\nts \cA} (\nu)\bigr)
(X)$ as a consequence of Proposition~\ref{prop:gen-props}, and hence
\[
    \bigl( \varPhi (\nu)\bigr) (X) \, = \, - \rtot{} \, \nu(X)
     \,  +  \! \! \sum_{\cA\in\ts\PP(S)}\! \! \varrho(\cA)\, 
     \bigl( R^{\pa}_{\nts \cA} (\nu) \bigr) (X) \, = \, 0 \ts ,
\]    
which implies the preservation of the norm of a positive measure under
the forward flow. The last claim is then obvious.
\end{proof}

In view of our underlying biological problem, we restrict our
attention to the investigation of the recombination equation on the
cone $\cM_{+} (X)$, and on $\cP (X)$ in particular. To proceed, we
will first bring the ODE \eqref{eq:reco-eq} into a simpler form.  To
this end, observe the structure of $\varPhi$, which suggests that, as
the flow proceeds forward in time from an initial measure $\nu\in\cP
(X)$, the solution picks up fractions of components of the form
$R^{\pa}_{\cB} (\nu)$ for various or even all $\cB\in\ts\PP(S)$,
depending on which recombination rates are positive and which
partitions can thus be reached in the course of time.

Let $|S|=n$ and fix some $\nu\in\cM_{+}(X)$.  Now, define the 
(finite-dimensional) set
\[
   \varDelta_{\ts \nu} \, := \, 
    \bigl\{\ts \textstyle{\sum_{\cC\in\ts\PP(S)}}
   \, q(\cC)\ts R^{\pa}_{\cC} (\nu) \mid q \in \cP (\PP (S)) \bigr\} ,
\]
which is the closed convex set that consists of all convex linear
combinations of the measures $R^{\pa}_{\cC} (\nu)$ with $\cC\in\PP
(S)$.  In fact, when the rates $\varrho (\cA)$ run through all
non-negative values, $\varDelta_{\ts \nu}$ is the smallest closed
convex set that contains all measures which can be reached in the
course of recombination from the initial measure $\nu$. The dimension
of $\varDelta_{\ts \nu}$ depends on the nature of $\nu$, and can even
be $0$. When the dimension is maximal (meaning $B(n) - 1$, which is
the generic case), $\varDelta_{\ts \nu}$ is a simplex and the $B(n)$
measures $R^{\pa}_{\cC} (\nu)$ are the \emph{extremal} measures of the
simplex in the sense of convex analysis. Observe that
\[
    R^{\pa}_{\cB} (\varDelta_{\ts \nu}) \, \subseteq \, \varDelta_{\ts \nu}
\]
holds for any $\cB \in \PP(S)$, which follows from a straightforward
though slightly technical calculation (details of which are given
below). This now suggests the ansatz
\begin{equation}\label{eq:sol-ansatz}
   \omega^{\pa}_{t} \, =  \! \sum_{\cC\in\ts\PP(S)}\! \! a^{\pa}_{t} 
   \nts\nts (\cC) \, R^{\pa}_{\cC} (\omega^{\pa}_{0})
\end{equation}
for the solution of our above Cauchy problem, with (generic) initial
condition $\omega^{\pa}_{0}$ and coefficient functions $a^{\pa}_{t}$,
the latter thus with $a^{\pa}_{0} (\cC) = \delta^{\pa}_{\cC,\pmax}$.
For each $t$, we view $a^{\pa}_{t} \! : \, \PP (S) \longrightarrow
\RR$ as an element of the M\"{o}bius algebra.  We shall prove a
posteriori that the strategy of Eq.~\eqref{eq:sol-ansatz} works, and
that the original Banach space ODE is thus reduced to a
finite-dimensional system of ODEs, whose solution consists of a
one-parameter family of probability vectors on $\PP(S)$. The case of
non-generic initial conditions will be discussed afterwards.\medskip

To proceed, we now have to calculate the action of $\varPhi$ on a
general measure $\omega \in \varDelta_{\ts\nu}$. It is convenient to
first consider probability measures, as the extension to general
positive measures is then immediate via the positive homogeneity of
the recombinators. So, let $\nu\in\cP(X)$, which means
$\varDelta_{\ts\nu} \subseteq \cP(X)$. Consider a single partition $\cB
= \{B_{1}, \dots , B_{r} \}$ and observe that
\[
\begin{split}
  R^{\pa}_{\cB} (\omega) \, & = \,
  \bigotimes_{i=1}^{r}\ts \pi^{\pa}_{\! B_i}\nts . \ts \omega  \, = \,
  \bigotimes_{i=1}^{r} \ts \pi^{\pa}_{\! B_i} .
    \biggl(\, \sum_{\cC\in\ts\PP(S)} \! q(\cC) \ts 
    R^{\pa}_{\cC} (\nu)\! \biggr)  \\[1mm]
  & = \,  \bigotimes_{i=1}^{r}\, \sum_{\cC\in\ts\PP(S)} \! q(\cC) \,
   \bigl( \pi^{\pa}_{\! B_i} . R^{\pa}_{\cC} (\nu) \bigr) \, =
   \sum_{\substack{\cC_{1}\nts , \dots , \cC_{r} \\
   \in\ts \PP(S)}} \biggl(\, \prod_{j=1}^{r}\, q(\cC_{j})\! \biggr)\,
   \bigotimes_{i=1}^{r} R^{\ts B_i}_{\nts \cC_{i}|^{\pa}_{\nts B_i}} \! 
   (\pi^{\pa}_{\! B_i} .\ts  \nu) \ts ,
\end{split}
\]
where we have used Lemma~\ref{lem:technical} in the last step.
Observing that the product measure in the last expression is a measure
of the form $R^{\pa}_{\nts\cA} (\nu)$ for some $\cA\in\ts\PP(S)$ with
$\cA\preccurlyeq \cB$, and that each such $\cA$ must occur here, we
see that
\begin{equation}\label{eq:product-structure}
    R^{\pa}_{\cB} (\omega) \, = \,  R^{\pa}_{\cB} \biggl(\,
    \sum_{\cC\in\ts\PP(S)} \! q(\cC) \, R^{\pa}_{\cC} (\nu)\! \biggr)
    \, = \sum_{\udo{\cA\ts}\nts 
    \preccurlyeq \cB} \biggl( \, \prod_{i=1}^{|\cB|} 
    \sum_{\substack{\cC\in\ts\PP(S) \\ \cC|^{\pa}_{\nts B_i} 
    \nts = \ts \cA|^{\pa}_{\nts B_i}}} 
    \! \! \! q(\cC)\! \biggr)\ts  R^{\pa}_{\nts \cA} (\nu) \ts ,
\end{equation}
where we use a dot under a symbol to mark it as the summation
variable, thus following the notation of \cite{Aigner}.  Next, if
$\alpha \geqslant 0$, Proposition~\ref{prop:gen-props} implies that
\[
     \alpha \ts \omega \, = \! 
     \sum_{\cC\in\ts\PP(S)} \!\! \alpha \, q(\cC) \, 
     R^{\pa}_{\cC} (\nu) \, = \sum_{\cC\in\ts\PP(S)} \! q(\cC) \,
     R^{\pa}_{\cC} (\alpha \ts \nu)
\]
are two equivalent ways to write $\omega' = \alpha\ts \omega$.
Consequently, we extend our previous formula as
\begin{equation}\label{eq:extend-gamma}
   R^{\pa}_{\cB} (\omega) \, =  \sum_{\udo{\cA\ts}\nts \preccurlyeq\cB}
   \gamma(q;\cA,\cB\ts ) \, R^{\pa}_{\nts \cA} (\nu) \ts ,
\end{equation}
where we define the function $\gamma$  as
\begin{equation}\label{eq:def-gamma}
     \gamma(q; \cA,\cB\ts ) \, := \, \frac{1}{\| q \|_{1}^{|\cB|-1}} 
     \, \prod_{i=1}^{|\cB|} \sum_{\substack{\cC\in\ts\PP(S) \\ 
      \cC|^{\pa}_{\nts B_i} \nts = \cA|^{\pa}_{\nts B_i}}} \! \! \! q (\cC)
\end{equation}
for any $\cA,\cB\in \PP(S)$ with $\cA\preccurlyeq\cB$, and as
$\gamma(q;\cA,\cB\ts )=0$ otherwise. In Eq.~\eqref{eq:def-gamma}, $q$
may be any real vector of dimension $B(n)$, even though we have only
considered positive ones above. Also, one has $\gamma (0; \cA,\cB) =
0$ for consistency.

\begin{remark}\label{rem:prob-vec}
  When $q$ is a probability vector on $\PP (S)$, one has $\| q
  \|^{\pa}_{1} = q (\PP (S)) = 1$. The right-hand side of
  Eq.~\eqref{eq:extend-gamma} can then be read as a \emph{product of
    marginalised probabilities} for the subsystems defined by the
  parts $B_i$ of the partition $\cB$. This structure will be made
  explicit in Section~\ref{sec:marginal} (see
  Lemma~\ref{lem:technical} in particular) and will later pave the way
  to a recursive solution of the recombination equation.  \exend
\end{remark}

For fixed $q$, the function $\gamma(q;\cdot,\cdot)$ is an element of
the \emph{incidence algebra} of our lattice $\PP(S)$ over the field
$\RR$.  Following \cite{Aigner}, we denote this algebra by $\AAA
(\PP(S))$, with convolution as multiplication. The latter is defined
as
\[
    \bigl(\alpha * \beta \bigr) (\cA,\cB\ts ) \, = \!
    \sum_{\cC\in [\cA,\cB ]} \! \alpha (\cA,\cC) 
    \, \beta (\cC,\cB\ts ) \ts .
\]
Note that this definition automatically gives $ (\alpha * \beta)
(\cA,\cB\ts ) =0$ if $\cA\not\preccurlyeq\cB$, as it must.  Important
elements of $\AAA (\PP(S))$ include the \emph{unit} $\delta$, defined
by $\delta (\cA,\cB\ts ) = \delta^{}_{\! \cA,\cB}$, the \emph{zeta
  function} $\zeta$, defined by $\zeta(\cA,\cB\ts ) = 1$ for
$\cA\preccurlyeq\cB$ and $\zeta(\cA,\cB\ts ) = 0$ otherwise, and the
\emph{M\"{o}bius function} $\mu$, which is the multiplicative (left
and right) inverse of $\zeta$. One then has $\zeta\nts\nts * \mu = \mu
* \zeta = \delta$.  We refer to \cite{SD} for details and more
advanced aspects of incidence algebras.

With $\rtot{} = \sum_{\cA\in\ts\PP(S)} \varrho (\cA)$ and $0 \leqslant
\omega \in \varDelta_{\ts \nu}$, Eq.~\eqref{eq:extend-gamma} allows us
to continue as follows,
\begin{align}
  \nonumber
  \varPhi (\omega) \, & = \,
  \varPhi \biggl(\, \sum_{\cC\in\ts\PP(S)} \! q(\cC) \ts 
    R^{\pa}_{\cC} (\nu)\! \biggr) \,  = 
   \sum_{\cB\in\ts\PP(S)} \! \varrho(\cB\ts )\ts 
   \bigl( R^{\pa}_{\cB} - \pmax \bigr)
   \sum_{\cC\in\ts\PP(S)} \! q(\cC) \ts 
   R^{\pa}_{\cC} (\nu) \\[2mm]
  & = - \rtot{}\, \omega \; + \sum_{\cB\in\ts\PP(S)} 
    \! \varrho(\cB\ts )
   \sum_{\udo{\cA\ts}\nts \preccurlyeq \cB} \gamma(q;\cA,\cB\ts ) \,  
    R^{\pa}_{\cA} (\nu) 
    \label{eq:two-star} \\[1mm]
  & = \sum_{\cA\in\ts\PP(S)} \! \! R^{\pa}_{\nts\cA} (\nu)
   \Bigl( - \rtot{}\, q(\cA)   \, +  
   \sum_{\udo{\cB}\succcurlyeq\cA} \gamma(q;\cA,\cB\ts )\, 
     \varrho (\cB\ts ) \Bigr) \ts ,
     \nonumber
\end{align}
which is the desired action of $\varPhi$ on elements of $\cM_{+}(X)$.

Inserting Eq.~\eqref{eq:sol-ansatz} into the recombination equation
\eqref{eq:reco-eq} now gives the following result.

\begin{lemma}\label{lem:reduction}
  The ansatz \eqref{eq:sol-ansatz} for the solution\/
  $\omega^{\pa}_{t}$ of the recombination equation \eqref{eq:reco-eq}
  on\/ $\cM_{+} (X)$ leads to a system of induced ODEs for the
  coefficient functions\/ $a^{\pa}_{t}\nts$, namely
\begin{equation}\label{eq:gen-reco-coeff-DGL}
   \dot{a}^{\pa}_{t} (\cA) \, = \, - \rtot{}\,
   a^{\pa}_{t} (\cA)\, + \sum_{\udo{\cB} \succcurlyeq \cA}
  \gamma ( a^{\pa}_{t} \nts ; \cA,\cB\ts ) \ts \varrho (\cB\ts )
\end{equation}
for $\cA\in\ts\PP(S)$, where $\gamma$ is defined as
in Eq.~\eqref{eq:def-gamma}.    
\end{lemma}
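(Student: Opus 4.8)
The plan is to substitute the ansatz \eqref{eq:sol-ansatz} directly into the recombination equation \eqref{eq:reco-eq} and to compare the two sides, each of which is a time-dependent linear combination of the fixed family of measures $\{ R^{\pa}_{\cC} (\omega^{\pa}_{0}) : \cC \in \PP(S) \}$. The analytic heart of the matter, namely the action of $\varPhi$ on an element of $\varDelta_{\nu}$, has already been worked out in the computation culminating in \eqref{eq:two-star}, so what remains is little more than a careful comparison of coefficients.

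First I would differentiate the left-hand side. Since every $R^{\pa}_{\cC} (\omega^{\pa}_{0})$ is a fixed measure, term-by-term differentiation of \eqref{eq:sol-ansatz} yields $\dot{\omega}^{\pa}_{t} = \sum_{\cC \in \PP(S)} \dot{a}^{\pa}_{t} (\cC) \, R^{\pa}_{\cC} (\omega^{\pa}_{0})$, with no contribution from the recombinators themselves. For the right-hand side, I would invoke \eqref{eq:two-star} with the choices $\nu = \omega^{\pa}_{0}$ and $q = a^{\pa}_{t}$; this is legitimate because the ansatz keeps $\omega^{\pa}_{t}$ inside $\varDelta_{\omega^{\pa}_{0}}$, the invariant convex set on which that identity was derived. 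In its final form, this gives $\varPhi (\omega^{\pa}_{t}) = \sum_{\cA \in \PP(S)} R^{\pa}_{\cA} (\omega^{\pa}_{0}) \bigl( - \rtot{} \, a^{\pa}_{t} (\cA) + \sum_{\cB \succcurlyeq \cA} \gamma (a^{\pa}_{t}; \cA, \cB) \, \varrho(\cB) \bigr)$.

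Equating the two expressions, both sides are expansions in the same measures $R^{\pa}_{\cA} (\omega^{\pa}_{0})$, and reading off the coefficient of each $R^{\pa}_{\cA} (\omega^{\pa}_{0})$ produces precisely \eqref{eq:gen-reco-coeff-DGL}. The step that requires the most care is the justification of this last comparison. In the generic situation, where $\varDelta_{\omega^{\pa}_{0}}$ is a full-dimensional simplex, the $B(n)$ measures $R^{\pa}_{\cA} (\omega^{\pa}_{0})$ are its extremal points and thus linearly independent, so the matching is an exact equivalence and the infinite-dimensional ODE is faithfully reduced to the finite system \eqref{eq:gen-reco-coeff-DGL}. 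Even without appealing to independence, the construction still succeeds in the direction we ultimately need: if $a^{\pa}_{t}$ solves \eqref{eq:gen-reco-coeff-DGL}, the two displayed sides coincide as measures, so the ansatz furnishes a genuine solution of \eqref{eq:reco-eq}, which by the uniqueness part of Proposition~\ref{prop:gensol} must be \emph{the} solution. A minor bookkeeping point, easily checked, is that using the time-dependent vector $a^{\pa}_{t}$ in place of a fixed probability vector in $\gamma$ causes no trouble: $\gamma(q; \cdot, \cdot)$ is defined for arbitrary real $q$ with the $\| q \|_{1}$ normalisation built in, and norm preservation on $\cP(X)$ maintains $\| a^{\pa}_{t} \|_{1} = 1$ throughout the flow.
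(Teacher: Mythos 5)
Your proposal is correct, and its core coincides with the paper's own proof: both differentiate the ansatz term by term, invoke the computation culminating in \eqref{eq:two-star} with $\nu = \omega^{\pa}_{0}$ and $q = a^{\pa}_{t}$, and justify the comparison of coefficients in the generic case by extremality of the measures $R^{\pa}_{\nts\cA}(\omega^{\pa}_{0})$ in the simplex $\varDelta_{\ts\omega_{0}}$ (your upgrade from affine to linear independence is legitimate, since these extremal measures are probability measures and hence lie in a hyperplane avoiding the origin). Where you genuinely diverge is the non-generic case. The paper extends the correspondence to all $\omega^{\pa}_{0}\in\cM_{+}(X)$ by a density-plus-continuity argument: generic initial measures are dense, the solution of the Cauchy problem depends continuously on the initial condition, and the coefficient functions $a^{\pa}_{t}$ do not depend on $\omega^{\pa}_{0}$, so the identity passes to the limit. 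You instead note that the implication actually needed downstream holds unconditionally: if $a^{\pa}_{t}$ solves \eqref{eq:gen-reco-coeff-DGL}, the ansatz measure solves \eqref{eq:reco-eq}, and uniqueness (Proposition~\ref{prop:gensol}) forces it to be \emph{the} solution. This avoids any approximation argument and in fact yields both directions of Theorem~\ref{thm:equivalence} directly, so it is a clean alternative. One point to tighten: applying \eqref{eq:two-star} with $q = a^{\pa}_{t}$ requires $a^{\pa}_{t}$ to be a non-negative (indeed probability) vector, because that computation rests on Lemma~\ref{lem:technical} for \emph{positive} measures. This is guaranteed by the forward invariance of $\cP(\PP(S))$ under the finite-dimensional flow (Proposition~\ref{prop:reduction-props}), whose proof is independent of the present lemma, and not by norm preservation of the measure-valued flow on $\cP(X)$ as you state; the latter presupposes the very correspondence being established and would be circular at this point.
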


\begin{proof}
  Clearly, Eq.~\eqref{eq:gen-reco-coeff-DGL} is the result of a
  comparison of coefficients, based on the original equation
  \eqref{eq:reco-eq}, its short form \eqref{eq:short}, and
  Eq.~\eqref{eq:two-star}. For a generic $\nu\in\cM_{+} (X)$, this is
  justified by the fact that the measures $R^{\pa}_{\nts \cA} (\nu)$
  with $\cA\in\PP(S)$ then are the extremal measures of the
  forward-invariant simplex $\varDelta_{\ts\nu}$. Since the set of
  generic $\nu$ is dense in $\cM_{+}(X)$, and the solution of the
  Cauchy problem depends continuously on the initial condition, the
  extension to all $\nu\in\cM_{+} (X)$ is consistent.
\end{proof}

Note that our above calculation was based upon the action of
recombinators on \emph{positive} measures, because we have used
Lemma~\ref{lem:technical}. Consequently, we do not know how
Eqs.~\eqref{eq:reco-eq} and \eqref{eq:gen-reco-coeff-DGL} are related
beyond this case. Fortunately, this is not required, as the next
result shows, where we use $\RR_{+} = \{ x\in\RR \mid x\geqslant 0\}$.

\begin{prop}\label{prop:reduction-props}
  The ODE system defined by Eq.~\eqref{eq:gen-reco-coeff-DGL}, with\/
  $\gamma$ as in Eq.~\eqref{eq:def-gamma}, is of dimension\/
  $B(n)=|\PP(S)|$, and has a unique solution for its Cauchy problem.
  
  The closed cone\/ $\cM_{+} (\PP(S)) \simeq \RR_{+}^{\nts B(n)}$ is
  invariant in forward time, and the\/ $\| .  \|^{\pa}_{1}$-norm of
  non-negative initial conditions is preserved. In particular, the
  simplex\/ $\cP(\PP(S))$ of probability vectors on\/ $\PP(S)$ is
  invariant in forward time.
\end{prop}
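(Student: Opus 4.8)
The plan is to treat the reduced system \eqref{eq:gen-reco-coeff-DGL} as an ODE on $\RR^{B(n)}$ in its own right and to establish the three assertions in turn. The dimension count is immediate: there is exactly one scalar equation, indexed by $\cA\in\PP(S)$, so the system is $|\PP(S)|=B(n)$-dimensional. For well-posedness, I would first observe that the right-hand side is a rational vector field in the components $a^{\pa}_{t}(\cA)$, whose only possible singularity comes from the factor $\| q \|_{1}^{-(|\cB|-1)}$ hidden in $\gamma$; hence it is smooth, and in particular locally Lipschitz, on the open region where $\sum_{\cC} a^{\pa}_{t}(\cC)\neq 0$. This yields local existence and uniqueness by Picard--Lindel\"of. (Alternatively, one may transfer well-posedness directly from Proposition~\ref{prop:gensol}: for a generic $\nu$, the ansatz \eqref{eq:sol-ansatz} identifies the coefficient equation with the Banach-space equation \eqref{eq:reco-eq} restricted to the simplex $\varDelta_{\ts\nu}$, on which the flow is already known to be well-posed.)

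For forward invariance of the cone $\cM_{+}(\PP(S))\simeq\RR_{+}^{\nts B(n)}$, I would verify the subtangentiality (quasipositivity) condition on the boundary, exactly as in the proof of Proposition~\ref{prop:gensol}. For $q\geqslant 0$, the quantity $\gamma(q;\cA,\cB\ts)$ is non-negative, being a product of non-negative partial sums of the $q(\cC)$ divided by the positive power $\| q \|_{1}^{|\cB|-1}$. Consequently, whenever a coordinate vanishes, $a^{\pa}_{t}(\cA)=0$, the first term of \eqref{eq:gen-reco-coeff-DGL} drops out and
\[
   \dot{a}^{\pa}_{t}(\cA) \, = \sum_{\udo{\cB}\succcurlyeq\cA}
   \gamma(a^{\pa}_{t};\cA,\cB\ts)\,\varrho(\cB\ts) \, \geqslant \, 0 \ts ,
\]
so the field points into (or along) the cone. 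Forward invariance of the closed cone then follows from the same classic continuity argument invoked before; see \cite[Thm.~16.5 and Rem.~16.6]{Amann}.

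The heart of the matter is norm preservation, and the crux is the combinatorial identity
\[
   \sum_{\udo{\cA\ts}\nts\preccurlyeq\cB} \gamma(q;\cA,\cB\ts)
   \, = \, \| q \|_{1} \ts ,
\]
valid for every $\cB\in\PP(S)$ and every $q\geqslant 0$. To prove it, I would use that the assignment $\cA\mapsto(\cA|^{\pa}_{B_{1}},\dots,\cA|^{\pa}_{B_{|\cB|}})$ is a bijection from $\{\cA\preccurlyeq\cB\}$ onto $\prod_{i}\PP(B_{i})$, so that the sum over $\cA\preccurlyeq\cB$ factorises over the blocks of $\cB$. Each block factor then collapses to $\sum_{\cC\in\ts\PP(S)}q(\cC)=\| q \|_{1}$, because every $\cC$ restricts to a unique partition of $B_{i}$ and is thus counted exactly once; the prefactor $\| q \|_{1}^{-(|\cB|-1)}$ absorbs all but one of the resulting $|\cB|$ powers. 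Summing \eqref{eq:gen-reco-coeff-DGL} over $\cA$ and interchanging the order of summation then gives, on the cone,
\[
   \sum_{\cA\in\ts\PP(S)} \dot{a}^{\pa}_{t}(\cA)
   \, = \, -\rtot{}\,\| a^{\pa}_{t} \|_{1}
   \, + \sum_{\cB\in\ts\PP(S)}\varrho(\cB\ts)\,\| a^{\pa}_{t} \|_{1}
   \, = \, 0 \ts ,
\]
since $\sum_{\cB}\varrho(\cB)=\rtot{}$. Hence $\| a^{\pa}_{t} \|_{1}$ is conserved, and restricting to the level $\| a^{\pa}_{t} \|_{1}=1$ yields forward invariance of $\cP(\PP(S))$.

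Finally, the conserved norm feeds back into well-posedness: any orbit starting in the cone with positive norm remains on the compact slice $\{a\geqslant 0 : \| a \|_{1}=\text{const}\}$, bounded away from the singular set $\| q \|_{1}=0$, so the local solution extends to all $t\geqslant 0$. I expect the main obstacle to be precisely this interplay between the singularity of $\gamma$ at the origin and the need for global forward existence: the argument only closes because the invariance and norm-preservation estimates confine the flow to a region where the vector field is smooth. The combinatorial identity above is the other delicate point, but it becomes routine once the block-factorisation of $\{\cA\preccurlyeq\cB\}$ is observed.
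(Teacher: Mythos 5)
Your proposal is correct, and its skeleton coincides with the paper's: coordinate-wise quasipositivity plus \cite[Thm.~16.5]{Amann} for forward invariance of the cone, then the identity $\sum_{\cA\preccurlyeq\cB}\gamma(q;\cA,\cB\ts)=\|q\|^{\pa}_{1}$ of Eq.~\eqref{eq:one-box} to get norm conservation and hence invariance of $\cP(\PP(S))$. The differences are in two subroutines, and both are worth noting. First, you prove the key identity by the direct combinatorial route: the bijection $\cA\mapsto(\cA|^{\pa}_{B_{1}},\dots,\cA|^{\pa}_{B_{|\cB|}})$ between $\{\cA \mid \cA\preccurlyeq\cB\}$ and $\prod_{i}\PP(B_{i})$, which factorises the sum over the blocks of $\cB$, each factor collapsing to $\|q\|^{\pa}_{1}$ since every $\cC\in\PP(S)$ has exactly one restriction to $B_{i}$. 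The paper instead obtains Eq.~\eqref{eq:one-box} by taking total-variation norms on both sides of Eq.~\eqref{eq:extend-gamma}, using $\|R^{\pa}_{\nts\cA}(\nu)\|=\|\nu\|=1$; it explicitly mentions your direct calculation as an alternative but does not carry it out, so you have supplied the self-contained, purely finite-dimensional version of this step. Second, your treatment of well-posedness is more careful than the paper's: the paper asserts that Lipschitz continuity of the right-hand side ``is obvious here'', passing silently over the factor $\|q\|_{1}^{-(|\cB|-1)}$ in the definition \eqref{eq:def-gamma} of $\gamma$, whereas you localise away from the singular point $q=0$ and then use precisely the invariance and norm-conservation statements to confine forward orbits to a compact slice of the cone, which closes the global-existence argument. (For completeness, the paper's shortcut can also be repaired directly: $\gamma(\,\cdot\,;\cA,\cB\ts)$ is positively homogeneous of degree one and smooth on the part of the unit sphere inside the cone, hence Lipschitz on the whole cone including the origin, where $\gamma(0;\cA,\cB\ts)=0$.) Both routes deliver the same result; yours stays entirely at the level of the finite-dimensional system, while the paper leans on the measure-theoretic identities already established in Section~\ref{sec:gen-reco}.
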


\begin{proof}
  The ODE system of Eq.~\eqref{eq:gen-reco-coeff-DGL} can be written
  as $\dot{a}^{\pa}_{t} = \varPsi (a^{\pa}_{t})$, where $\varPsi$ is
  nonlinear.  The first claim is clear, while solution uniqueness
  once again follows from Lipschitz continuity of $\varPsi$, which is
  obvious here.

Let $q\in\cM_{+} (\PP(S)) = \RR_{+}^{\PP (S)}\nts \simeq \RR_{+}^{B(n)}$ 
be arbitrary and consider any subset $\GG\subseteq\PP(S)$
such that $q(\GG) := \sum_{\cA\in\GG} q(\cA) = 0$. Then, one has
\[
    \bigl(\varPsi (q)\bigr) (\GG) \, = \sum_{\cA\in\GG}
     \bigl(\varPsi (q)\bigr) (\cA) \, = \, -\rtot{} \,
     q(\GG) \, + \sum_{\cA\in\GG}\, \sum_{\udo{\cB} \succcurlyeq \cA}
     \gamma (q; \cA,\cB\ts ) \ts \varrho(\cB\ts ) \, 
      \geqslant \, 0 \ts ,
\]
since $q(\GG)=0$ by assumption and all $\gamma (q; \cA,\cB\ts )$ as
well as all $\varrho (\cB\ts )$ are non-negative.  This gives the
invariance of $\cM_{+} (\PP(S))$ under the forward flow by standard
arguments \cite[Thm.~16.5]{Amann}.

Repeating the calculation for $\GG = \PP(S)$ yields
\[
    \bigl(\varPsi (q)\bigr) (\PP(S)) \, = \, -\rtot{} \,
    q(\PP(S)) \, + \sum_{\cB\in\ts\PP(S)} \varrho(\cB\ts )
    \sum_{\udo{\cA\ts}\nts \preccurlyeq \cB} \gamma(q;\cA,\cB\ts ) \ts .
\]
Now, the last sum (for any fixed $\cB$) is
\begin{equation}\label{eq:one-box}
    \sum_{\udo{\cA\ts}\nts \preccurlyeq \cB} 
     \gamma(q;\cA,\cB\ts ) \, = \, 
    \| q \|^{\pa}_{1} \, = \, q(\PP(S)) \ts ,
\end{equation}
as follows from Eq.~\eqref{eq:extend-gamma} by taking norms on both
sides and observing that $ \| R^{\pa}_{\nts \cA} (\nu) \| = \| \nu \|
= 1$ for all $\cA\preccurlyeq\cB$ as well as $ \| R^{\pa}_{\cB}
(\omega) \| = 1$, while one has $\gamma(q;\cA,\cB\ts )\geqslant 0$ for
any $q\in\cM_{+}(\PP(S))$; alternatively, one can also verify this
with a direct calculation on the basis of the definition of $\gamma$.
Consequently, $ \bigl(\varPsi (q)\bigr) (\PP(S)) =0 $, which implies
the claimed norm preservation. The positive invariance of
$\cP(\PP(S))$ is then clear.
\end{proof}

\begin{remark}\label{rem:reduction}
  Observing that $\gamma (q;\cA,\pmax) = q(\cA)$ holds for any
  $q\in\RR^{\PP (S)}$, one can rewrite the ODE of
  Eq.~\eqref{eq:reco-eq} as
\[
    \dot{a}^{\pa}_{t} (\cA) \, = \, - \bigl( 
        \rtot{} - \varrho(\pmax)\bigr)
    \ts a^{\pa}_{t}  (\cA) \, + \! \! 
         \sum_{\cA\preccurlyeq \udo{\cB} \prec \pmax}
    \!\! \gamma( a^{\pa}_{t} ; \cA,\cB ) \, \varrho (\cB) \ts ,
\]   
which corresponds to the observation made in Remark~\ref{rem:balance}.
\exend
\end{remark}

With Proposition~\ref{prop:reduction-props}, we may now conclude that
our ansatz \eqref{eq:gen-reco-coeff-DGL} is consistent and suitable
for the reduction of the original problem to a finite-dimensional one.

\begin{theorem}\label{thm:equivalence}
  The one-parameter family of measures\/ $\{ \omega^{\pa}_{t}\nts\nts
  \mid t\geqslant 0\}$ is a solution of the Cauchy problem of
  Eq.~\eqref{eq:reco-eq} with initial condition\/ $\omega^{\pa}_{0}
  \in \cM_{+} (X)$ if and only if it is of the form
\[
    \omega^{\pa}_{t} \, = \! \sum_{\cB\in\ts\PP(S)}\! \! 
    a^{\pa}_{t} (\cB\ts ) \, R^{\pa}_{\cB} (\omega^{\pa}_{0})
\]
  where the coefficient functions satisfy the Cauchy problem of
  Eq.~\eqref{eq:gen-reco-coeff-DGL} with initial condition\/
  $a^{\pa}_{0} (\cB\ts ) = \delta (\cB,\pmax)$.
\end{theorem}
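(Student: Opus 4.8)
The plan is to prove the two implications separately, concentrating the computational weight on the direction that passes from a solution of the reduced system \eqref{eq:gen-reco-coeff-DGL} to a solution of the recombination equation \eqref{eq:reco-eq}, and then deducing the converse purely from the uniqueness statements already at our disposal. This second move also neatly sidesteps the difficulty that, for non-generic $\omega^{\pa}_{0}$, the representation \eqref{eq:sol-ansatz} need not determine the coefficients $a^{\pa}_{t}$ uniquely.

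For the forward direction, I would start from a solution $\{a^{\pa}_{t}\}$ of the Cauchy problem of Eq.~\eqref{eq:gen-reco-coeff-DGL} with $a^{\pa}_{0}(\cB) = \delta(\cB,\pmax)$ and define $\omega^{\pa}_{t}$ by the ansatz \eqref{eq:sol-ansatz}. The initial condition is immediate: since the recombinator $R^{\pa}_{\pmax}$ is the identity map (Remark~\ref{rem:balance}), the ansatz gives $\omega^{\pa}_{0} = R^{\pa}_{\pmax}(\omega^{\pa}_{0}) = \omega^{\pa}_{0}$. By Proposition~\ref{prop:reduction-props}, the simplex $\cP(\PP(S))$ is forward invariant, so $a^{\pa}_{t}$ remains a probability vector and $\omega^{\pa}_{t}$ stays in $\varDelta_{\omega^{\pa}_{0}} \subseteq \cM_{+}(X)$; this is exactly what is needed to apply the identity \eqref{eq:two-star} with $\nu = \omega^{\pa}_{0}$ and $q = a^{\pa}_{t}$.

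The computation itself is then immediate. Since the measures $R^{\pa}_{\cB}(\omega^{\pa}_{0})$ are independent of $t$, differentiating \eqref{eq:sol-ansatz} in $\cM(X)$ yields $\dot{\omega}^{\pa}_{t} = \sum_{\cB} \dot{a}^{\pa}_{t}(\cB)\, R^{\pa}_{\cB}(\omega^{\pa}_{0})$. On the other hand, Eq.~\eqref{eq:two-star} writes $\varPhi(\omega^{\pa}_{t})$ as the sum over $\cA$ of $R^{\pa}_{\cA}(\omega^{\pa}_{0})$ times the bracket $-\rtot{}\, a^{\pa}_{t}(\cA) + \sum_{\udo{\cB}\succcurlyeq\cA} \gamma(a^{\pa}_{t};\cA,\cB)\,\varrho(\cB)$, and this bracket is precisely $\dot{a}^{\pa}_{t}(\cA)$ by \eqref{eq:gen-reco-coeff-DGL}. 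Hence both $\dot{\omega}^{\pa}_{t}$ and $\varPhi(\omega^{\pa}_{t})$ equal the same expression $\sum_{\cA} \dot{a}^{\pa}_{t}(\cA)\, R^{\pa}_{\cA}(\omega^{\pa}_{0})$; no linear independence of the $R^{\pa}_{\cA}(\omega^{\pa}_{0})$ is needed, so $\dot{\omega}^{\pa}_{t} = \varPhi(\omega^{\pa}_{t})$, which is \eqref{eq:reco-eq} in the short form \eqref{eq:short}. Thus $\omega^{\pa}_{t}$ solves the original Cauchy problem; this is essentially Lemma~\ref{lem:reduction} read in reverse.

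For the converse, I would argue by uniqueness. Let $\tilde{a}^{\pa}_{t}$ be the unique solution of the reduced Cauchy problem guaranteed by Proposition~\ref{prop:reduction-props}, and set $\tilde{\omega}^{\pa}_{t} := \sum_{\cB} \tilde{a}^{\pa}_{t}(\cB)\, R^{\pa}_{\cB}(\omega^{\pa}_{0})$. By the forward direction, $\tilde{\omega}^{\pa}_{t}$ solves the Cauchy problem of \eqref{eq:reco-eq} with the prescribed initial condition. Since that problem has a unique solution by Proposition~\ref{prop:gensol}, any solution $\omega^{\pa}_{t}$ must equal $\tilde{\omega}^{\pa}_{t}$, and hence is of the asserted form with coefficients $\tilde{a}^{\pa}_{t}$ that solve \eqref{eq:gen-reco-coeff-DGL}. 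The step I expect to require the most care is the one flagged at the outset: one must verify that the algebraic identity \eqref{eq:two-star}, derived via Lemma~\ref{lem:technical} only for non-negative $\omega \in \varDelta_{\nu}$, is genuinely available along the entire trajectory rather than merely formally. This hinges on the forward invariance of $\cM_{+}(\PP(S))$ from Proposition~\ref{prop:reduction-props}; with that in hand, the remainder is bookkeeping over which invariance and which uniqueness result licenses each step.
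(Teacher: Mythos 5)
Your proposal is correct, and while it shares its skeleton with the paper's proof, it is organised in a way that removes one piece of machinery the paper relies on. The paper reaches the theorem through the calculation \eqref{eq:two-star}, Lemma~\ref{lem:reduction}, and the uniqueness statements of Propositions~\ref{prop:gensol} and~\ref{prop:reduction-props}; but Lemma~\ref{lem:reduction} and the paper's treatment of the initial conditions both rest on a comparison of coefficients in the expansion over the measures $R^{\pa}_{\nts\cA}(\omega^{\pa}_{0})$, which is only legitimate when these are extremal in $\varDelta_{\ts\omega^{\pa}_{0}}$, i.e.\ for generic $\omega^{\pa}_{0}$, and is then extended to arbitrary initial data via density of the generic set and continuous dependence on initial conditions. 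Your argument keeps the same two pillars --- the identity \eqref{eq:two-star} and the double uniqueness --- but never compares coefficients: in the direction from the reduced system to the measure-valued equation you exhibit $\dot{\omega}^{\pa}_{t}$ and $\varPhi(\omega^{\pa}_{t})$ as literally the same finite sum $\sum_{\cA}\dot{a}^{\pa}_{t}(\cA)\, R^{\pa}_{\nts\cA}(\omega^{\pa}_{0})$, so no linear independence of these measures is needed, and the converse direction is then pure uniqueness, applied to the family built from the unique solution of the reduced Cauchy problem. What this buys is a uniform treatment of generic and degenerate $\omega^{\pa}_{0}$, with no appeal to density or to continuous dependence of solutions on initial data; you also isolate correctly the one point where positivity genuinely enters, namely that \eqref{eq:two-star} descends from Lemma~\ref{lem:technical} and is therefore only available on $\cM_{+}(X)$, which is exactly what the forward invariance of $\cP(\PP(S))$ in Proposition~\ref{prop:reduction-props} guarantees along the trajectory. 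Finally, your reading of the statement as an existence claim --- there exist representing coefficients that solve the reduced problem, with no assertion about other representing families in the non-generic case --- is the correct one, and it is precisely what lets the uniqueness argument close the equivalence.
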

 
\begin{proof}
  The solution property is clear from our above calculations, while
  the correspondence of the initial conditions is obvious for the
  generic case, and extends to the general case by a standard
  continuity argument.  The claim now follows from the uniqueness
  statements for the two Cauchy problems.
\end{proof}

\begin{remark}
  Let us mention that an alternative path to
  Theorem~\ref{thm:equivalence} is possible via \cite[Thm.~16.5 and
  Rem.~16.6]{Amann}, by showing that the convex set $\varDelta_{\nu}$
  is forward invariant for any $\nu\in\cM_{+} (X)$ as initial
  condition. This requires the verification of the `inside reflection
  property' for any piece of the boundary of $\varDelta_{\ts \nu}$,
  which is somewhat tedious in view of the possible degeneracies. This
  is the reason why we opted for our approach above.  \exend
\end{remark}

Let us briefly discuss the structure of $\varDelta_{\ts \nu}$ for a
general $\nu\in\cM_{+} (X)$, including the non-generic cases. Each
$\nu$ gives rise to a set of partitions
\[
      \HH_{\nu} \, := \, \{ \cA \in \PP (S) \mid 
      R^{\pa}_{\nts \cA} (\nu) = \nu \}\ts ,
\]
which is non-empty (since $\pmax \in \HH_{\nu}$ for all $\nu$) and
defines a unique partition
\[
    \cU_{\nu} \, := \, \bigwedge \! \HH_{\nu} \, = \,
    \{ U^{\pa}_{1}, \ldots , U^{\pa}_{r} \} \ts ,
\]
so that $\HH_{\nu} = \{ \cA\in\PP (S) \mid \cA\succcurlyeq \cU_{\nu}
\}$.  The partition $\cU_{\nu}$ also defines the sublattice $\PP_{\nu}
:= \PP (U^{\pa}_{1}) \times \ldots \times \PP (U^{\pa}_{r})$ of
product form, with $B(\lvert U^{\pa}_{1}\rvert ) \cdot \ldots \cdot
B(\lvert U^{\pa}_{r}\rvert )$ elements. It is precisely this
sublattice of $\PP (S)$ that determines the structure of the convex
set $\varDelta_{\ts \nu}$, which is now the Cartesian product of $r$
simplices, and of total dimension $\bigl(B(\lvert U^{\pa}_{1}\rvert
)-1\bigr) + \ldots + \bigl(B(\lvert U^{\pa}_{r}\rvert ) - 1\bigr)$.

It is clear that the time evolution of a measure $\nu\in\cM_{+} (X)$
under the flow of the recombination equation \eqref{eq:reco-eq} can
thus be reduced to a smaller ODE system, which is fully consistent
with our above treatment as a consequence of
Eq.~\eqref{eq:product-structure}, as $\varPhi (\nu) = \varPhi \bigl(
R^{\pa}_{\cU_{\nu}} \nts (\nu) \bigr)$ then means that effectively
only partitions from the set $\PP_{\nu}$ are involved. We leave it to
the reader to spell out the details for the modified correspondence
and the appropriate initial conditions for the finite-dimensional ODE
system.

We are now in the position to approach a solution of the recombination
equation.

\section{Solution under a linearity 
assumption}\label{sec:linear}

The relatively simple solution structure in the case of
single-crossover dynamics (see references \cite{BB,MB}) was due to the
fact that the nonlinear recombinators acted \emph{linearly} along
solutions. Since this is a special case of our general model, via
setting $\varrho (\cA) = 0$ for any $\cA$ that fails to be an ordered
partition with (at most) two parts, it is reasonable to consider this
point of view also more generally. It will turn out that the linearity
assumption is false in general, but we can still learn some
interesting things along the way.

Thus, let us assume that also the more general
recombinators act linearly along the solution. A simple calculation
shows that our coefficient functions then have to satisfy the ODEs
\begin{equation}\label{eq:def-lineq}
  \frac{\dd}{\dd t}\,  \alin (\cA) \, = \, - \rtot{}\,
  \alin (\cA)\, + \sum_{\udo{\cB} \succcurlyeq \cA}
  \beta (\alin ; \cA,\cB\ts ) \ts \varrho (\cB\ts )
\end{equation}
for all $\cA\in\ts\PP(S)$, where, for any fixed $q\in\cM(\PP(S))$,
\begin{equation}\label{eq:two-box}
   \beta (q ; \cA,\cB\ts ) \, := \begin{cases}
   \sum_{ \udo{\cC} \wedge\ts \cB = \cA}\, q(\cC) ,
     & \text{if $\cA \preccurlyeq \cB$}, \\
   0 , & \text{otherwise}, \end{cases}
\end{equation}
is another element of the incidence algebra $\AAA(\PP(S))$.  The
difference of Eq.~\eqref{eq:def-lineq} to the general equation
\eqref{eq:gen-reco-coeff-DGL} thus lies in the replacement of the
function $\gamma$ by the significantly simpler linear function
$\beta$.

\begin{prop}\label{prop:lin-solve}
  The Cauchy problem defined by Eq.~\eqref{eq:def-lineq} together with
  the initial condition\/ $a^{\mathrm{lin}}_{0} (\cA) = \delta
  (\cA,\pmax) $ has the unique solution given by
\[
   \alin (\cA) \, = 
   \sum_{\substack{\GG \subseteq \PP(S) \\ \bigwedge\! \GG = \cA}} \,
   \prod_{\sigma\in\GG} \bigl(1- \ee^{-\varrho(\sigma) t} \bigr)
   \prod_{\tau\in\GG_{\phantom{I}}^{\mathsf{c}}}\! \ee^{-\varrho(\tau) t} ,
\]
  which, for $t\geqslant 0$,  constitutes a one-parameter family of
  probability vectors on\/ $\PP(S)$.
\end{prop}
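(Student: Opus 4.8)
The plan is to exploit the fact that the right-hand side of Eq.~\eqref{eq:def-lineq} is \emph{linear} in $\alin$: both the term $-\rtot{}\,\alin(\cA)$ and the map $q \mapsto \beta(q;\cA,\cB)$ are linear, so the system is a linear ODE with constant coefficients and its Cauchy problem has a unique solution by standard theory. It therefore suffices to verify that the proposed formula satisfies the stated initial condition and the differential equation, and, separately, that it is a probability vector for $t\geqslant 0$. The crucial preliminary observation is that the proposed $\alin(\cA)$ is precisely the quantity $c(\cA)$ of Eq.~\eqref{eq:c-def}, evaluated with the choice $f(\cB) = \ee^{-\varrho(\cB)\ts t}$.

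With this identification the probability-vector claim is immediate: since $0 \leqslant \ee^{-\varrho(\cB) t} \leqslant 1$ for $t \geqslant 0$ (as $\varrho(\cB) \geqslant 0$), every summand is non-negative, and the normalisation $\sum_{\cA} \alin(\cA) = 1$ is exactly the telescoping computation of Eq.~\eqref{eq:c-normed} with this $f$. The initial condition is equally direct: at $t=0$ one has $f(\cB) = 1$, so each factor $1 - f(\cB)$ vanishes and only $\GG = \varnothing$ contributes to the outer sum; since $\bigwedge\!\varnothing = \pmax$, this gives $a^{\mathrm{lin}}_{0}(\cA) = \delta(\cA,\pmax)$, as required.

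The substantive step is the verification of the ODE. Writing $\alin(\cA) = \sum_{\bigwedge\! \GG = \cA} w_{\GG}(t)$ with $w_{\GG}(t) := \prod_{\sigma\in\GG}(1-\ee^{-\varrho(\sigma)t}) \prod_{\tau\in\GG^{\mathsf{c}}} \ee^{-\varrho(\tau)t}$, I would differentiate $w_{\GG}$ by the product rule, using $\frac{\dd}{\dd t}(1-\ee^{-\varrho t}) = \varrho\,\ee^{-\varrho t}$ and $\frac{\dd}{\dd t}\ee^{-\varrho t} = -\varrho\,\ee^{-\varrho t}$. The factor produced by a partition $\cD\in\GG$ recombines with the surviving factors to rebuild $w_{\GG\setminus\{\cD\}}$, while the factor produced by a partition $\cD\in\GG^{\mathsf{c}}$ reproduces $w_{\GG}$ with coefficient $-\varrho(\cD)$; this yields $\dot{w}_{\GG} = \sum_{\cD\in\GG} \varrho(\cD)\, w_{\GG\setminus\{\cD\}} - \bigl(\sum_{\cD\in\GG^{\mathsf{c}}}\varrho(\cD)\bigr) w_{\GG}$. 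Summing over all $\GG$ with $\bigwedge\!\GG = \cA$ and reindexing the gain sum via $\GG' = \GG\setminus\{\cD\}$ expresses $\frac{\dd}{\dd t}\alin(\cA)$ as a sum over pairs $(\cD,\GG')$ with $\cD\notin\GG'$ and $(\bigwedge\!\GG')\wedge\cD = \cA$, minus the loss sum $\sum_{\bigwedge\!\GG=\cA} w_{\GG}\sum_{\cD\in\GG^{\mathsf{c}}}\varrho(\cD)$.

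The main obstacle, and the one delicate piece of bookkeeping, is to match this against the target right-hand side. Inserting the same representation into $\sum_{\cB\succcurlyeq\cA}\beta(\alin;\cA,\cB)\,\varrho(\cB)$ and using $\beta(\alin;\cA,\cB) = \sum_{\cC\wedge\cB=\cA}\alin(\cC)$ produces $\sum_{\cB\succcurlyeq\cA}\varrho(\cB)\sum_{(\bigwedge\!\GG)\wedge\cB=\cA} w_{\GG}$, but now \emph{without} the restriction $\cB\notin\GG$ present in the gain term. I would resolve this by splitting the inner sum according to whether $\cB\in\GG$ or $\cB\in\GG^{\mathsf{c}}$: the part with $\cB\notin\GG$ reproduces the gain sum exactly (the constraint $(\bigwedge\!\GG)\wedge\cB=\cA$ forcing $\cB\succcurlyeq\cA$ automatically), while the part with $\cB\in\GG$ satisfies $(\bigwedge\!\GG)\wedge\cB = \bigwedge\!\GG = \cA$ and hence contributes $\sum_{\bigwedge\!\GG=\cA} w_{\GG}\sum_{\cB\in\GG}\varrho(\cB)$. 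Combining this last term with $-\rtot{}\,\alin(\cA)$ and using $\rtot{} = \sum_{\cB\in\GG}\varrho(\cB) + \sum_{\cB\in\GG^{\mathsf{c}}}\varrho(\cB)$ converts it precisely into the loss sum found above. Thus the target right-hand side equals $\frac{\dd}{\dd t}\alin(\cA)$ for every $\cA$, and uniqueness completes the proof.
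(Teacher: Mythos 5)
Your proof is correct, but it takes a genuinely different route from the paper's. You verify the candidate formula directly: writing $\alin(\cA)=\sum_{\bigwedge\!\GG=\cA}w_{\GG}(t)$, differentiating each $w_{\GG}$ by the product rule, and then matching the result against the right-hand side of Eq.~\eqref{eq:def-lineq} via the split of the $\beta$-sum according to $\cB\in\GG$ versus $\cB\notin\GG$. That split is the one delicate step, and it works: the condition $(\bigwedge\!\GG)\wedge\cB=\cA$ forces $\cB\succcurlyeq\cA$ in the gain part, while $\cB\in\GG$ forces $(\bigwedge\!\GG)\wedge\cB=\bigwedge\!\GG=\cA$, so that term combines with $-\rtot{}\,\alin(\cA)$ into exactly your loss sum. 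The paper instead transforms the whole system by the upper summatory operator $\bigl(F\alin\bigr)(\cA)=\sum_{\cB\succcurlyeq\cA}\alin(\cB)$, shows that $F$ of the candidate formula collapses to the single product $\prod_{\cB\notin[\cA,\pmax]}\ee^{-\varrho(\cB)t}$ satisfying the decoupled diagonal ODE \eqref{eq:summatory-DGL}, checks that $F$ applied to the right-hand side of Eq.~\eqref{eq:def-lineq} yields the same diagonal structure, and concludes by M\"{o}bius inversion. Your argument is more elementary and self-contained (no incidence-algebra machinery beyond the definition \eqref{eq:two-box}), at the cost of heavier combinatorial bookkeeping; the paper's argument is shorter once the lattice tools are in place, and, more importantly, it exposes the decay rates $\chi(\cB)=\sum_{\cC\notin[\cB,\pmax]}\varrho(\cC)$ and the decoupled exponential structure (see Eq.~\eqref{eq:lin-decay} and Remark~\ref{rem:lin-decay-inverse}), which is precisely what motivates the general ansatz \eqref{eq:gen-ansatz} later in the paper. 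Both proofs use the identity \eqref{eq:c-def}/\eqref{eq:c-normed} with $f(\cB)=\ee^{-\varrho(\cB)t}$ for the probability-vector claim, and your appeal to linearity of $q\mapsto\beta(q;\cA,\cB)$ for uniqueness is sound.
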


\begin{proof}
  Consider the (upper) summatory function $\bigl(F\alin\bigr) (\cA) :=
  \sum_{\cB \succcurlyeq \cA} \alin (\cB\ts )$, which is
\[
\begin{split}
   \bigl(F\alin\bigr) (\cA) \, & = 
   \sum_{\substack{\GG \subseteq \PP(S) \\ \bigwedge\! \GG \succcurlyeq \cA}} \,
   \prod_{\sigma\in\GG} \bigl(1- \ee^{-\varrho(\sigma) t} \bigr)
   \prod_{\tau\in\GG_{\phantom{I}}^{\mathsf{c}}}\! \ee^{-\varrho(\tau) t} \\[1mm]
   & = \prod_{\cB \notin [\cA,\pmax]} \ee^{-\varrho (\cB\ts ) t}
   \sum_{\GG \subseteq [\cA,\pmax]} \,
   \prod_{\sigma\in\GG} \bigl(1- \ee^{-\varrho(\sigma) t} \bigr)
   \prod_{\tau\in\GG_{\phantom{I}}^{\mathsf{c}}\cap [\cA,\pmax]} 
   \! \! \ee^{-\varrho(\tau) t} \\[2mm]
   & =  \prod_{\cB \notin [\cA,\pmax]} \ee^{-\varrho (\cB\ts ) t}
   \prod_{\cC \in [\cA,\pmax]} 
   \bigl(1- \ee^{-\varrho(\cC) t} + \ee^{-\varrho (\cC) t} \bigr)
   \, =  \prod_{\cB \notin [\cA,\pmax]} \ee^{-\varrho (\cB\ts ) t}
\end{split}
\] 
and hence satisfies the simple ODE
\begin{equation}\label{eq:summatory-DGL}
   \frac{\dd}{\dd t}\ts  \bigl(F\alin\bigr) (\cA) \, = \,
   - \biggl(\,\sum_{\cB \notin [\cA,\pmax]} \! \varrho (\cB\ts )\! \biggr)
   \, \bigl(F\alin\bigr) (\cA) \ts ,
\end{equation}
together with the initial condition $ \bigl(F a^{\mathrm{lin}}_{0}
\bigr) (\cA) = 1$ for all $\cA\in\ts\PP(S)$.

On the other hand, for fixed $\cA,\cB\in\ts\PP (S)$ with
$\cA \preccurlyeq \cB$, we have
\[
   \sum_{\cC \in [\cA,\cB]} \beta (\alin; \cC, \cB\ts ) \, = \!
   \sum_{\cC \in [\cA,\cB]} \, \sum_{\udo{\cD} \wedge \cB = \cC}
   \! \alin (\cD) \, = \! \sum_{ \udo{\cD} \wedge \cB \succcurlyeq \cA} 
    \! \alin (\cD) \, = \sum_{\udo{\cD}\succcurlyeq \cA}
   \alin (\cD) \, = \, \bigl( F \alin\bigr) (\cA) \ts ,
\]
wherefore the summatory function of the right-hand side of
Eq.~\eqref{eq:def-lineq}, evaluated at $\cA$, becomes
\[
   - \rtot{} \bigl( F \alin \bigr) (\cA) + \biggl(\,
     \sum_{\udo{\cB}\succcurlyeq \cA} \varrho (\cB\ts )\! \biggr) 
    \bigl(F \alin\bigr) (\cA)
  \, = \, - \biggl(\,\sum_{\cB \notin [\cA,\pmax]} 
    \!  \varrho (\cB\ts ) \! \biggr)
     \, \bigl(F\alin\bigr) (\cA) \ts ,
\]
which agrees with the right-hand side of Eq.~\eqref{eq:summatory-DGL}.
The claim now follows from (upper) M\"{o}bius inversion, because, for
all $t\geqslant 0$ and for all $\cA\in\ts\PP(S)$, one has
\begin{equation}\label{eq:Moebius-sum}
    \alin (\cA) \, =  \sum_{\udo{\cB}\succcurlyeq\cA} 
    \mu(\cA,\cB\ts )\ts  \bigl(F\alin\bigr) (\cB\ts )\ts ,
\end{equation}
where $\mu\in\AAA(\PP(S))$ denotes the M\"{o}bius function for the
lattice $\PP(S)$.

The fact that $a^{\pa}_{t}$, for each $t\geqslant 0$, is a probability
vector on $\PP(S)$ follows from our earlier calculation in
Eq.~\eqref{eq:c-def}, with $f(x) = \ee^{x}$.
\end{proof}

This approach clearly has a lattice-theoretic basis, which lends
itself to a number of interesting further aspects and insights
\cite{BS}.

\begin{remark}\label{rem:lin-decay-inverse}
  Let us note that Eqs.~\eqref{eq:summatory-DGL} and
  \eqref{eq:Moebius-sum} allow a re-interpretation of the coefficient
  formula from Proposition~\ref{prop:lin-solve} as
\begin{equation}\label{eq:lin-decay}
   \alin (\cA) \, =  \sum_{\udo{\cB}\succcurlyeq \cA}
   \mu(\cA,\cB\ts ) \, \ee^{-\chi (\cB\ts ) t}
\end{equation}
with $\chi (\cB\ts ) = \sum_{\cC\notin [\cB,\pmax]} \varrho (\cC)$
being the \emph{decay rate} of the corresponding (exponential)
term. This relation means $\chi(\cB) = \rtot{} - \sum_{\udo{\cC}
  \succcurlyeq\cB} \varrho(\cB)$, so that the recombination rates are
obtained from the decay rates by means of (upper) M\"{o}bius inversion
as
\[
    \varrho (\cB) \, = \, \delta(\cB,\pmax) \ts \rtot{} \, - 
    \sum_{\udo{\cC} \succcurlyeq \cB} \mu(\cB,\cC)
    \, \chi(\cC) \ts ,    
\]
where it is assumed that the total recombination rate $\rtot{}$ is
known. This detail corresponds to the fact that $\varrho (\pmax)$
does not contribute to any of the $\chi (\cA)$.

Either version of $\alin$ permits the determination of the asymptotic
properties of the coefficients $\alin$ as $t\to\infty$, and hence that
of the measure $\omega^{\pa}_{t}$. In particular, when the partition
subset $\GG := \{ \cA \in \PP(S) \mid \varrho (\cA) > 0 \}$ satisfies
$\bigwedge\!\GG = \pmin$, one has
\[
    \lim_{t\to\infty} \omega^{\pa}_{t} \, = \, 
    R^{\pa}_{\pmin} (\omega^{\pa}_{0}) \ts ,
\]
with convergence in the $\|.\|$-topology. This is the obvious
generalisation of the known asymptotic properties in the special
cases treated in \cite{Buerger,BB}.
\exend
\end{remark}

\begin{remark}
  Let us briefly mention that the linear system of ODEs defined by
  Eq.~\eqref{eq:def-lineq} can still be solved when the recombination
  rates become time-dependent, as was previously observed in
  \cite[Addendum]{MB} for single-crossover recombination.  Indeed, if
  all $\varrho^{\pa}_{t} (\cA)$ are non-negative functions of time
  (which is needed to preserve all claims of
  Proposition~\ref{prop:reduction-props} also for
  Eq.~\eqref{eq:def-lineq} with time-dependent rates), the solution
  formula from Proposition~\ref{prop:lin-solve} becomes
\[
    \alin (\cA) \, = 
   \sum_{\substack{\GG \subseteq \PP(S) \\ \bigwedge\! \GG = \cA}} \,
   \prod_{\sigma\in\GG} \, \Bigl(1- \exp 
    \bigl(- {\textstyle \int_{0}^{t} }
    \, \varrho^{\pa}_{s}(\sigma) \dd s \bigr) \Bigr)
   \prod_{\tau\in\GG_{\phantom{I}}^{\mathsf{c}}}\! \exp \bigl(
    - {\textstyle \int_{0}^{t}}\, \varrho^{\pa}_{s} (\tau) 
    \dd s \bigr) \ts ,
\]
for the same initial conditions.  The proof is completely analogous to
that of Proposition~\ref{prop:lin-solve}, now with
\[
   \bigl( F \alin (\cA)\bigr) \, = \! \prod_{\cB\notin [\cA,\pmax]}
    \! \! \exp\bigl( - {\textstyle \int_{0}^{t}}\, 
   \varrho^{\pa}_{s} (\cB) \dd s \bigr) .
\]
This summatory function then satisfies the ODE
\[
    \frac{\dd}{\dd t}\ts  \bigl(F\alin\bigr) (\cA) \, = \,
   - \biggl(\,\sum_{\cB \notin [\cA,\pmax]} \! \! \varrho^{\pa}_{t} 
   (\cB\ts )\! \biggr) \, \bigl(F\alin\bigr) (\cA) \ts ,
\]
which replaces Eq.~\eqref{eq:summatory-DGL} in this generalisation.
\exend
\end{remark}

\begin{theorem}\label{thm:better-than-nothing}
  Let\/ $\cA\in\ts\PP(S)$ be\/ $\cA = \pmax$ or a partition into
  two parts, one of which is a singleton set. Then, the coefficient
  formula from Proposition~$\ref{prop:lin-solve}$ gives the correct
  solution also for the Cauchy problem of the general recombination
  equation from Eq.~\eqref{eq:gen-reco-coeff-DGL}.
\end{theorem}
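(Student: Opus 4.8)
The plan is to exploit that the general coefficient ODE~\eqref{eq:gen-reco-coeff-DGL} and the linear one~\eqref{eq:def-lineq} differ \emph{only} in the replacement of the nonlinear function $\gamma$ of Eq.~\eqref{eq:def-gamma} by the linear function $\beta$ of Eq.~\eqref{eq:two-box}. It therefore suffices to show that, for the two special types of $\cA$ in the statement, the equation for the single component $a^{\pa}_{t}(\cA)$ closes off from the rest of the system, and that on the arguments which actually occur one has $\gamma = \beta$. The correct $\cA$-component of the general problem then coincides with the linear one, so that the formula of Proposition~\ref{prop:lin-solve} applies verbatim.

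First I would record the lattice input: if $\cA$ has at most two blocks, then $[\cA,\pmax] = \{\cA,\pmax\}$, since a two-block partition can only be coarsened by merging its two blocks, so the sum $\sum_{\udo{\cB}\succcurlyeq\cA}$ in~\eqref{eq:gen-reco-coeff-DGL} reduces to the terms $\cB=\cA$ and $\cB=\pmax$ alone (and, for $\cA=\pmax$, to the single term $\cB=\pmax$). For $\cA=\pmax$ this settles matters at once: from $|\pmax|=1$ the definition~\eqref{eq:def-gamma} gives $\gamma(q;\pmax,\pmax)=q(\pmax)$, while $\cC\wedge\pmax=\cC$ gives $\beta(q;\pmax,\pmax)=q(\pmax)$, so both systems furnish the identical closed scalar equation $\dot{a}^{\pa}_{t}(\pmax)=-(\rtot{}-\varrho(\pmax))\,a^{\pa}_{t}(\pmax)$ with initial value $1$, and uniqueness (Proposition~\ref{prop:reduction-props}) forces agreement.

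Next I would treat the main case $\cA=\{\{k\},S\setminus\{k\}\}$ with blocks $B_1=\{k\}$ and $B_2=S\setminus\{k\}$. For $\cB=\pmax$, Remark~\ref{rem:reduction} already gives $\gamma(q;\cA,\pmax)=q(\cA)=\beta(q;\cA,\pmax)$. The crux is $\gamma(q;\cA,\cA)$: as $|\cA|=2$, the prefactor is $1/\|q\|^{\pa}_{1}$. Because $B_1$ is a singleton, the restriction constraint $\cC|^{\pa}_{B_1}=\cA|^{\pa}_{B_1}$ is vacuous (every partition restricts to the trivial partition of a point), so the $B_1$-factor equals $\sum_{\cC}q(\cC)=\|q\|^{\pa}_{1}$ and cancels the denominator. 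The $B_2$-factor sums $q(\cC)$ over those $\cC$ keeping all of $B_2$ in one block, i.e.\ over $\cC\in\{\cA,\pmax\}$, yielding $q(\cA)+q(\pmax)$. Hence $\gamma(q;\cA,\cA)=q(\cA)+q(\pmax)$, which is exactly $\beta(q;\cA,\cA)=\sum_{\udo{\cC}\succcurlyeq\cA}q(\cC)$.

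Finally I would conclude: for such $\cA$, the pair $\bigl(a^{\pa}_{t}(\cA),a^{\pa}_{t}(\pmax)\bigr)$ obeys a decoupled two-dimensional subsystem whose vector field is literally the same in~\eqref{eq:gen-reco-coeff-DGL} and~\eqref{eq:def-lineq}, with identical initial data $a^{\pa}_{0}(\cA)=0$ and $a^{\pa}_{0}(\pmax)=1$. By the uniqueness statement of Proposition~\ref{prop:reduction-props}, the general solution satisfies $a^{\pa}_{t}(\cA)=\alin(\cA)$, so the formula of Proposition~\ref{prop:lin-solve} is correct here too. The decisive (and delicate) point is the cancellation in $\gamma(q;\cA,\cA)$, which works precisely because one block is a singleton and fails for a general two-block partition; this is exactly why the statement is restricted to this case.
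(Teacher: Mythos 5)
Your proof is correct and follows essentially the same route as the paper's: you verify $\gamma(q;\cA,\pmax)=q(\cA)=\beta(q;\cA,\pmax)$ and, for $\cA=\bigl\{\{k\},S\setminus\{k\}\bigr\}$, that $\gamma(q;\cA,\cA)=q(\cA)+q(\pmax)=\beta(q;\cA,\cA)$, so the general system \eqref{eq:gen-reco-coeff-DGL} and the linear system \eqref{eq:def-lineq} coincide on these components and uniqueness of the Cauchy problem gives the claim. Your extra observations --- that $[\cA,\pmax]=\{\cA,\pmax\}$, that the pair $\bigl(a^{\pa}_{t}(\cA),a^{\pa}_{t}(\pmax)\bigr)$ forms a closed two-dimensional subsystem, and that the cancellation hinges on the singleton block making the restriction constraint vacuous --- merely make explicit what the paper's terser proof leaves implicit.
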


\begin{proof}
  Let a probability vector $q$ on $\PP(S)$ be given.  The definitions
  of $\beta$ and $\gamma$ from Eqs.~\eqref{eq:one-box} and 
  \eqref{eq:two-box} imply, via a simple calculation, that
\[
    \gamma(q;\cA,\pmax) \, = \, q (\cA) \, = \,
    \beta(q;\cA,\pmax)
\]
holds for any $\cA\in\ts\PP(S)$. This gives the claim for $\cA =
\pmax$, because Eqs.~\eqref{eq:gen-reco-coeff-DGL} and
\eqref{eq:def-lineq} are equal in this case, hence 
$a^{\pa}_{t} (\pmax) = \alin (\pmax)$.

More generally, when $\cA = \bigl\{ \{i\}, S\nts\setminus\nts\nts
\{i\}\bigr\}$ for some $i\in S$, one finds that
\[
   \gamma(q;\cA,\cA) \, = \, q(\pmax) + q(\cA) \, = \,
   \beta(q;\cA,\cA) \ts ,
\]
so that the ODEs from Eqs.~\eqref{eq:gen-reco-coeff-DGL} and
\eqref{eq:def-lineq} coincide also for such partitions $\cA$, which
proves the second claim.
\end{proof}

\begin{coro}\label{coro:add-on}
  The coefficient formula from Proposition~$\ref{prop:lin-solve}$
  gives the correct solution of Eq.~\eqref{eq:gen-reco-coeff-DGL} for
  all\/ $\cA\in\ts\PP (S)$ whenever\/ $S$ is a finite set
  with\/ $1 \leqslant |S| \leqslant 3$.
\end{coro}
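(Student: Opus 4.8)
The plan is to show that, for $|S| \leqslant 3$, the general coefficient system \eqref{eq:gen-reco-coeff-DGL} is \emph{identical} to the linear system \eqref{eq:def-lineq}; the explicit formula of Proposition~\ref{prop:lin-solve} then solves the former, and the uniqueness statement of Proposition~\ref{prop:reduction-props} completes the argument. Since the solution stays in the simplex $\cP(\PP(S))$, it suffices to verify the identity $\gamma(q;\cA,\cB) = \beta(q;\cA,\cB)$ for every comparable pair $\cA\preccurlyeq\cB$ and every probability vector $q$ on $\PP(S)$, with $\gamma$ as in \eqref{eq:def-gamma} and $\beta$ as in \eqref{eq:two-box}.

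First I would dispose of the pairs already handled. Theorem~\ref{thm:better-than-nothing} establishes $\gamma=\beta$ on the relevant pairs whenever $\cA=\pmax$ or $\cA = \{\{i\}, S\setminus\{i\}\}$ is a two-part partition with a singleton block. Enumerating partitions, for $|S|\leqslant 2$ every partition is of one of these two types, so nothing remains. For $|S|=3$ the five partitions are $\pmax$, the three two-part partitions (each automatically carrying a singleton block, since a three-element set cannot split into two blocks of size $\geqslant 2$), and $\pmin$. Thus the only case left to treat is $\cA=\pmin$.

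The decisive observation is that \emph{every} partition $\cB$ of a set of size at most three has at most one non-singleton block. When $\cB$ has a unique non-singleton block $B_{1}$, the remaining blocks $B_{2},\dots,B_{k}$ being singletons, the restriction $\pmin|_{B_{i}}$ is the finest partition of $B_{i}$; hence the constraint $\cC|_{B_{i}} = \pmin|_{B_{i}}$ is vacuous for each singleton $B_{i}$, and the corresponding factor in \eqref{eq:def-gamma} equals $\|q\|^{}_{1}=1$. The product therefore collapses, the normalisation $\|q\|_{1}^{|\cB|-1}$ cancelling against the $k-1$ singleton factors, leaving the single sum $\sum_{\cC:\,\cC|_{B_{1}}=\pmin|_{B_{1}}} q(\cC)$. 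On the other side, $\cC\wedge\cB = \pmin$ holds precisely when $\cC$ separates every pair of elements not already separated by $\cB$, and these are exactly the pairs inside $B_{1}$; so $\beta(q;\pmin,\cB)$ sums $q(\cC)$ over the same partitions $\cC$, and the two expressions agree. The boundary cases $\cB=\pmax$ (one block) and $\cB=\pmin$ (all singletons) fit this pattern as well, yielding $q(\pmin)$ and $1$ on both sides.

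With $\gamma=\beta$ on all comparable pairs, the right-hand sides of \eqref{eq:gen-reco-coeff-DGL} and \eqref{eq:def-lineq} coincide as functions of the coefficient vector, so the formula of Proposition~\ref{prop:lin-solve} is the unique solution of \eqref{eq:gen-reco-coeff-DGL}, as claimed. I expect the only (modest) obstacle to be isolating the right combinatorial invariant: recognising that it is the bound ``at most one non-singleton block'' that forces the product of marginals defining $\gamma$ to degenerate into the single joint sum defining $\beta$. This also explains why the statement is sharp at $|S|=3$: for $|S|\geqslant 4$ a partition such as $\{\{1,2\},\{3,4\}\}$ has two non-singleton blocks, and $\gamma(q;\pmin,\cB)$ becomes a genuine product of two marginal sums, which differs from the joint count $\beta(q;\pmin,\cB)$ for generic $q$.
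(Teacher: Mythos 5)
Your proof is correct, and it reaches the goal by a route that differs from the paper's in the one step that actually matters. Both arguments dispose of every partition other than $\pmin$ by appealing to Theorem~\ref{thm:better-than-nothing}, since for $1 \leqslant |S| \leqslant 3$ any such partition is either $\pmax$ or a two-block partition containing a singleton. The difference lies in the treatment of $\pmin$. The paper does no computation there at all: because both $a^{\pa}_{t}$ and $\alin$ are probability vectors on $\PP(S)$ (Propositions~\ref{prop:reduction-props} and~\ref{prop:lin-solve}), the normalisation $a^{\pa}_{t}(\pmin) = 1 - \sum_{\cB \ne \pmin} a^{\pa}_{t}(\cB)$ forces agreement at $\pmin$ once all other coefficients agree. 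You instead verify the identity $\gamma(q;\pmin,\cB) = \beta(q;\pmin,\cB)$ directly for every $\cB$ and every probability vector $q$, using the combinatorial fact that a partition of at most three elements has at most one non-singleton block, so that the product in Eq.~\eqref{eq:def-gamma} collapses (the singleton factors cancel the normalisation) to the single sum $\sum_{\cC:\,\cC|_{B_1}=\pmin|_{B_1}} q(\cC)$, which is exactly $\beta(q;\pmin,\cB)$ because $\cC \wedge \cB = \pmin$ is equivalent to $\cC|_{B_1} = \pmin|_{B_1}$ when $B_1$ is the only non-singleton block; I checked these reductions and they are sound. What each approach buys: the paper's conservation trick is shorter and avoids any combinatorics at $\pmin$, but it only identifies the two solutions for the given Cauchy problem. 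Your argument establishes the stronger statement that the vector fields of Eqs.~\eqref{eq:gen-reco-coeff-DGL} and~\eqref{eq:def-lineq} coincide \emph{identically} on the simplex $\cP(\PP(S))$ whenever $|S| \leqslant 3$ --- that is, the linearity assumption of Section~\ref{sec:linear} holds exactly in these cases, for \emph{any} initial condition in the simplex, not just $\delta(\cdot\ts,\pmax)$ --- and it isolates the precise reason the result is sharp: for $|S|=4$ a partition such as $\{\{1,2\},\{3,4\}\}$ has two non-singleton blocks, so $\gamma$ becomes a genuine product of marginal sums and differs from $\beta$, in agreement with the remark following the corollary in the paper.
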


\begin{proof}
  There is nothing to prove for $|S|=1$.  The claim for $|S|=2$ is
  obvious, and also follows from \cite[Prop.~3]{MB}. When $|S|=3$, all
  partitions except $\cA=\pmin$ satisfy the conditions of
  Theorem~\ref{thm:better-than-nothing}.  Since $a^{\pa}_{t} (\pmin) =
  1 - \sum_{\cB \ne \pmin} a^{\pa}_{t} (\cB\ts )$, the claim follows.
\end{proof}

\begin{remark}\label{rem:old-case}
  In the special situation of single-crossover recombination, where
  $\varrho (\cA) > 0$ only for \emph{ordered} partitions $\cA$ into
  two parts, the solution formula of Proposition~\ref{prop:lin-solve}
  reduces to the known solution for this case from \cite{BB,MB}.  In
  particular, the linearity assumption is satisfied, and the solution
  holds for all system sizes and all values of the single-crossover
  rates.  \exend
\end{remark}

Note, however, that already for $|S|=4$, when we are beyond the
single-crossover case, the coefficients $a^{\pa}_{t}$ and $\alin$ can
differ, for instance for $\cA = \{ \{1,2\},\{3,4\}\}$, which is a
biologically relevant partition. We thus need to proceed without the
linearisation assumption.

\section{General case: Marginalisation consistency}
\label{sec:marginal}

It is clear that our general recombination equation can only be
considered a reasonable model if it is \emph{marginalisation
  consistent}. By this we mean that the restriction to a subsystem,
via appropriate marginalisation, gives a solution of the recombination
equation for the subsystem.  We now discuss this in more detail, and
then establish this consistency property for our model, both in the
measure-theoretic and in the finite-dimensional version. The latter
case will depend on an interesting interplay between elements of the
M\"{o}bius and the incidence algebras at hand.

Let $S$ be as above, or any other finite set with $n$ elements, and
consider a subsystem as specified by $\varnothing \ne U\nts \subseteq
S$. When $\omega^{S}_{t}$ is the solution of the general recombination
equation \eqref{eq:reco-eq} with initial condition $\omega^{S}_{0} \in
\cM_{+} (X)$ according to Proposition~\ref{prop:gensol}, it is natural
to define
\begin{equation}\label{eq:red-measure}
    \omega^{U}_{t} = \, \pi^{\pa}_{U}\nts . \ts \omega^{S}_{t}
\end{equation}
as the corresponding (marginalised) measure for the subsystem defined
by $U$. Then, recalling that the projector $\pi^{\pa}_{U}$ is linear,
we get
\[
\begin{split}
   \frac{\dd}{\dd t}\, \omega^{U}_{t} & = \,
   \pi^{\pa}_{U}\nts . \! \left(\! \frac{\dd}{\dd t}\, 
   \omega^{S}_{t} \right)\, = \, \pi^{\pa}_{U} . 
    \biggl( \sum_{\,\cB\in\PP (S)}\! \varrho^{\ts S} (\cB)\,
    \bigl( R^{\ts S}_{\cB} - \pmax \bigr) 
     (\omega^{S}_{t}\ts ) \biggr) \\[1mm]
    & = \!  \sum_{\cB\in\PP (S)} \!  \varrho^{\ts S} (\cB) \,
    \Bigl(\nts \pi^{\pa}_{U} . \bigl( R^{\ts S}_{\cB}
    (\omega^{S}_{t}\ts )\bigr) -  \omega^{U}_{t} \Bigr) 
    \, = \! \sum_{\cB\in\PP (S)} \!  \varrho^{\ts S} (\cB) \,
    \Bigl(\nts\nts  R^{\ts U}_{\cB |^{\pa}_{U}}\nts
    ( \omega^{U}_{t}) -  \omega^{U}_{t} \Bigr) \\[2mm]
    & = \! \sum_{\cA\in\PP (U)} \,
    \sum_{\substack{\cB\in\PP (S) \\ \cB |^{\pa}_{U} = \cA}}
    \! \varrho^{S} (\cB) \; \Bigl(\nts  R^{\ts U}_{\cA}
    ( \omega^{U}_{t}) -  \omega^{U}_{t} \Bigr)  \, = 
    \! \sum_{\cA\in\PP (U)} \! \varrho^{U} \! (\cA) \,
    \bigl( R^{\ts U}_{\nts \cA} - \pmax \bigr) 
    (\omega^{U}_{t}) \ts ,
\end{split}
\]
where the fourth step is an application of Lemma~\ref{lem:technical},
while the last step anticipates the definition of the \emph{induced}
(or \emph{marginal}\ts)  
\emph{recombination rates} for the subsystem as
\begin{equation}\label{eq:def-induced-rates}
  \varrho^{\ts U}\! (\cA) \, := \!
  \sum_{\substack{\cD \in \PP(S) \\ \cD |^{\pa}_{U} = \cA}} 
  \! \varrho^{S} (\cD) 
\end{equation}
for any $\cA\in\PP (U)$. It is obvious that non-negativity of the
rates $\varrho^{S}$ implies that of the marginal rates $\varrho^{U}\!$.
Our little calculation proves the following result.

\begin{prop}\label{prop:measure-subsystem}
  Let\/ $S$ be a finite set and\/ $U\nts\subset S$ a non-empty subset.
  If\/ $\omega^{S}_{t}$ is a solution of the recombination equation
  \eqref{eq:reco-eq}, with recombination rates\/ $\varrho^{S} (\cB)
  \geqslant 0$ for\/ $\cB\in \PP (S)$ and with initial condition\/
  $\omega^{S}_{0} \in \cM_{+} (X)$, the marginalised measure\/
  $\omega^{U}_{t}$ from Eq.~\eqref{eq:red-measure} solves the
  recombination equation for the subsystem defined by\/ $U\! $,
  provided the recombination rates\/ $\varrho^{U}\! (\cA)$ for\/
  $\cA\in \PP (U)$ are defined according to
  Eq.~\eqref{eq:def-induced-rates}.  \qed
\end{prop}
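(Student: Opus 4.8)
The plan is to \emph{define} the subsystem's candidate solution to be the marginalised measure $\omega^{U}_{t} := \pi^{\pa}_{U} . \, \omega^{S}_{t}$ of Eq.~\eqref{eq:red-measure}, and then to verify by direct differentiation that it satisfies the recombination equation on $U$ with the induced rates. Because the projector $\pi^{\pa}_{U}$ is a bounded linear operator, it commutes with the time derivative, so $\frac{\dd}{\dd t}\,\omega^{U}_{t} = \pi^{\pa}_{U} . \, \dot{\omega}^{S}_{t}$. First I would substitute the recombination equation \eqref{eq:reco-eq} for $\dot{\omega}^{S}_{t}$ and push the projector through the finite sum, applying it to each summand $\varrho^{S}(\cB) \, (R^{\ts S}_{\cB} - \pmax)(\omega^{S}_{t})$ term by term.

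The \emph{key step} is the commutation relation of Lemma~\ref{lem:technical}, which yields $\pi^{\pa}_{U} . \, R^{\ts S}_{\cB}(\omega^{S}_{t}) = R^{\ts U}_{\cB|^{\pa}_{U}}(\pi^{\pa}_{U} . \, \omega^{S}_{t}) = R^{\ts U}_{\cB|^{\pa}_{U}}(\omega^{U}_{t})$; this is exactly the point where marginalisation of a recombinator on $S$ collapses into the recombinator on $U$ associated with the restricted partition $\cB|^{\pa}_{U}$. The $\pmax$-terms are handled by noting that $R^{\ts S}_{\pmax}$ is the identity, so $\pi^{\pa}_{U} . \, \pmax(\omega^{S}_{t}) = \omega^{U}_{t}$, which is precisely $\pmax$ acting on the subsystem. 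After this substitution the evolution equation reads $\frac{\dd}{\dd t}\,\omega^{U}_{t} = \sum_{\cB\in\PP(S)} \varrho^{S}(\cB) \, (R^{\ts U}_{\cB|^{\pa}_{U}} - \pmax)(\omega^{U}_{t})$.

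The remaining step is purely combinatorial: I would reorganise the sum over $\cB\in\PP(S)$ by grouping partitions according to their restriction to $U$, i.e.\ $\sum_{\cB\in\PP(S)} = \sum_{\cA\in\PP(U)} \sum_{\cB \,:\, \cB|^{\pa}_{U} = \cA}$. Since $R^{\ts U}_{\cB|^{\pa}_{U}}$ depends on $\cB$ only through $\cA = \cB|^{\pa}_{U}$, the inner sum collects the weights into $\sum_{\cB \,:\, \cB|^{\pa}_{U} = \cA} \varrho^{S}(\cB)$, which is exactly the induced rate $\varrho^{U}(\cA)$ of Eq.~\eqref{eq:def-induced-rates}. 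This produces $\frac{\dd}{\dd t}\,\omega^{U}_{t} = \sum_{\cA\in\PP(U)} \varrho^{U}(\cA) \, (R^{\ts U}_{\cA} - \pmax)(\omega^{U}_{t})$, the recombination equation on the subsystem, while $\varrho^{U}(\cA) \geqslant 0$ is immediate from $\varrho^{S} \geqslant 0$. I do not expect a genuine obstacle here; the only subtlety worth flagging is that Lemma~\ref{lem:technical} is stated on $\cM_{+}(X)$, so the argument lives on positive measures, which is consistent since the hypothesis $\omega^{S}_{0}\in\cM_{+}(X)$ together with the forward-invariance from Proposition~\ref{prop:gensol} keeps $\omega^{S}_{t}\in\cM_{+}(X)$ for all $t\geqslant 0$.
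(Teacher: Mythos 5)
Your proposal is correct and follows essentially the same route as the paper: differentiate the marginalised measure, push the (linear) projector through the sum, apply Lemma~\ref{lem:technical} to collapse $\pi^{\pa}_{U} . \, R^{\ts S}_{\cB}(\omega^{S}_{t})$ into $R^{\ts U}_{\cB|^{\pa}_{U}}(\omega^{U}_{t})$, and regroup the sum over $\cB\in\PP(S)$ by restriction to $U$ to recover the induced rates of Eq.~\eqref{eq:def-induced-rates}. Your closing remark on positivity (needed since Lemma~\ref{lem:technical} lives on $\cM_{+}(X)$, and guaranteed by the forward invariance of Proposition~\ref{prop:gensol}) is a point the paper uses implicitly, so it is a welcome addition.
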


\begin{remark}
  The problem of marginalisation consistency has been observed early
  on in mathematical population genetics. Ewens and Thomson \cite{ET}
  have tackled it in 1977 in models that describe the combined action
  of recombination \emph{and selection} for the two-parent case in
  discrete time; see also the review in
  \cite[pp.~69--72]{Buerger}. The corresponding dynamics is, in
  general, \emph{not} marginalisation consistent. However, it is
  obvious from the calculations in \cite{ET} that consistency does
  apply in the case without selection, and the dynamics is then
  governed by the marginal recombination probabilities, which are the
  two-parent, discrete-time analogues of our marginal recombination
  rates.  \exend
\end{remark}

Let us see how this result translates to the finite-dimensional ODE
systems at the level of the coefficient functions $a^{U}_{t}\! (\cA)$.
Given a general probability vector $q=q^{S}$ on $\PP(S)$, we define
the corresponding \emph{marginal probabilities} via
\begin{equation}\label{eq:def-marginal}
   q^{\ts U} \! (\cA) \, := 
   \sum_{\substack{\cD \in \PP (S)\\ \cD|^{\pa}_{\nts U} = \cA }}
   \! q^{S} (\cD) \ts ,
\end{equation}
for any partition $\cA \in \PP (U)$, in complete analogy to
Eq.~\eqref{eq:def-induced-rates}.  
Clearly, one has $q^{\ts U} \!
(\cA)\geqslant 0$, while proper normalisation follows from
\begin{equation}\label{eq:marg-norm}
   \sum_{\cA \in \PP (U)} \! q^{\ts U} \! (\cA) \,  = 
   \sum_{\cA \in \PP (U)}  \,
   \sum_{\substack{\cD \in \PP (S)\\ \cD|^{\pa}_{\nts U} = \cA }} 
   \! q^{S} (\cD) \, =  \sum_{\cB \in \PP (S)}  \,
   \sum_{\substack{\cC\in \PP (U) \\ \cC = \cB |^{\pa}_{\nts U} }}
    \! q^{S} (\cB\ts )  \, =   \sum_{\cB \in \PP (S)} \!
     q^{S} (\cB\ts ) \, = \, 1 \ts ,
\end{equation} 
where the penultimate step follows from the observation that the inner
sum over $\cC$ consists of precisely one term because the restriction
of a partition to a subset $U\nts\subseteq S$ is unique.

Let us note for later use that, for $\varnothing \ne U \subseteq V
\subseteq S$ and $\cA \in \PP (U)$, we also have
\begin{equation}\label{eq:marginal-gen}
   q^{\ts U} \! (\cA) \, = 
   \sum_{\substack{\cE \in \PP (V)\\ \cE|^{\pa}_{\nts U} = \cA }}
   \! q^{V} \! (\cE) \ts ,
\end{equation}
because 
\[
  \sum_{\substack{\cE \in \PP (V)\\ \cE|^{\pa}_{\nts U} = \cA }}
   \! q^{V}\! (\cE) \, = 
  \sum_{\substack{\cE \in \PP (V)\\ \cE|^{\pa}_{\nts U} = \cA }}\,
  \sum_{\substack{\cD \in \PP (S)\\ \cD|^{\pa}_{\nts V} = \cE }}
   \! q^{S} (\cD) \; = 
  \sum_{\substack{\cD \in \PP (S)\\ \cD|^{\pa}_{\nts U} = \cA }}
   \! q^{S} (\cD) \, = \, q^{U} \! (\cA) \,.
\]
As special cases of Eqs.~\eqref{eq:def-marginal} and
\eqref{eq:marginal-gen}, let us also note that, for $\cB \in \PP(S)$
and $\varnothing \ne U \subseteq V \subseteq S$, one has
\begin{equation}\label{eq:marginal-restricted}
   q^{\ts U} \! (\cB|^{\pa}_{\nts U}) \; = \!
   \sum_{\substack{\cD \in \PP (S)\\ \cD|^{\pa}_{\nts U} = \cB|^{\pa}_{\nts U} }}
   \! \! q^{S} (\cD) \; = \!
   \sum_{\substack{\cE \in \PP (V)\\ \cE\nts|^{\pa}_{\nts U} = \cB|^{\pa}_{\nts U} }}
   \! q^{V} \! (\cE) \ts .
\end{equation}

\begin{remark}\label{rem:marg-sum}
   The (upper) summatory function defined by $(Fq^{U}) (\cA) =
   \sum_{\udo{\cB}\succcurlyeq\cA} q^{U} \!(\cB)$ can also be calculated
   by marginalisation, namely as
\[
 \begin{split}
   \bigl(Fq^{U}\bigr) (\cA) \, & = 
    \sum_{\udo{\cB}\succcurlyeq\cA} q^{U}\! (\cB) \, =
   \sum_{\udo{\cB}\succcurlyeq\cA} \,
   \sum_{\substack{\cD \in \PP (S)\\ \cD|^{\pa}_{\nts U} = \cB }} 
   q^{S}\nts (\cD) \, = \!
   \sum_{\substack{\cD \in \PP (S)\\ \cD|^{\pa}_{\nts U} \succcurlyeq \cA }}
   \! q^{S}\nts (\cD) \\ & = \!
   \sum_{\substack{\cC \in \PP (S)\\ \cC|^{\pa}_{\nts U}= \cA }}
   \, \sum_{\udo{\cD}\succcurlyeq \cC}  q^{S}\nts (\cD) \, =
   \sum_{\substack{\cC \in \PP (S)\\ \cC|^{\pa}_{\nts U}= \cA }}
   \! \bigl(F q^{S} \bigr) (\cC) \ts ,
 \end{split}
\]
   wherefore consistency at this level is obvious.
\exend
\end{remark}

Let us return to the recombination rates $\varrho^{S} (\cB\ts )$ for
$\cB\in\ts\PP(S)$ together with
\[
    \rtot{S} \, = \!
    \sum_{\cB \in \PP (S)} \!\!  \varrho^{S} (\cB\ts ) \ts ,
\]
compare Remark~\ref{rem:balance}, and consider the marginal rates
$\varrho^{U}\! (\cA)$ with $\cA\in\PP (U)$ and $\varnothing \ne U \nts
\subseteq S$.  Repeating the above calculation of the normalisation
condition, one finds
\begin{equation}\label{eq:rtot}
     \rtot{U} \, = \, \rtot{S}  \, = \, \rtot{} \ts ,
\end{equation}
as it should be. The total recombination rate is thus the same on all
levels, and independent of $U$, as it must.  Note that, in this
process, $\varrho^{\ts U}\! (\{ U \} ) = \varrho^{U} \! (\pmax)$ need
not vanish. This does not matter because the corresponding
recombinator (on the subsystem) is the identity, and hence does not
affect the solution; compare Remark~\ref{rem:balance}. \medskip

Let us now see how the result of
Proposition~\ref{prop:measure-subsystem} translates to properties of
the coefficients for the ODE system and its subsystems. Here, the
desired marginalisation consistency will depend on the following
slightly technical, but somewhat surprising identity.
\begin{lemma}\label{lem:marginal}
  Let\/ $U\nts\subseteq S$ be as before, and let\/ $\cA,\cB \in \PP(U)$
  with\/ $\cA \preccurlyeq \cB$ be arbitrary, but fixed. Then, for
  any\/ $\cD\in\ts\PP(S)$ with\/ $\cD |^{\pa}_{U} = \cB$, one has the
  product representation and reduction relation
\[
   \sum_{\substack{\PP(S) \owns\ts \udo{\cC}
          \preccurlyeq \cD \\ \cC |^{\pa}_{\nts U} = \cA}}
   \gamma^{S} (q^{S}; \cC,\cD) \, = \!
   \sum_{\substack{\PP(S) \owns\ts \udo{\cC}
          \preccurlyeq \cD \\ \cC |^{\pa}_{\nts U} = \cA}}\,
   \prod_{i=1}^{|\cD|} q^{\ts D_i}  (\cC|^{\pa}_{\nts D_i})  \, = \,
    \prod_{i=1}^{|\cB|} q^{\ts B_i}  (\cA|^{\pa}_{\nts B_i})  \, = \,
   \gamma^{U}\nts\nts (q^{\ts U}\! ; \cA,\cB\ts ) \ts ,
\]
where\/ $q^{S}$ is any probability vector on\/ $\PP(S)$, $q^{\ts U}\!$
its marginalisation according to Eq.~\eqref{eq:def-marginal}, and\/
$\gamma^{U}\!$ is defined as in Eq.~\eqref{eq:def-gamma}, but with\/
$S$ replaced by\/ $U$.
\end{lemma}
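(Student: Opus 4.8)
The plan is to read the displayed chain as three separate equalities and to handle them in turn, since the two outer ones are essentially bookkeeping (unfolding the definition of $\gamma$ and recognising marginal probabilities) while the middle one carries the genuine combinatorial content.

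For the first and last equalities I would establish the \emph{product representation} of $\gamma$. Unfolding the definition \eqref{eq:def-gamma} and using that $q^S$ is a probability vector, so that $\| q^S\|^{\pa}_1 = 1$ and the prefactor is trivial, each inner sum over the constraint $\cE|^{\pa}_{D_i} = \cC|^{\pa}_{D_i}$ is by \eqref{eq:def-marginal} exactly the marginal $q^{\ts D_i}(\cC|^{\pa}_{D_i})$; hence $\gamma^S(q^S;\cC,\cD) = \prod_{i=1}^{|\cD|} q^{\ts D_i}(\cC|^{\pa}_{D_i})$, and summing over the admissible $\cC$ gives the first equality. The last equality is the same unfolding for $\gamma^U$, where $q^U$ is a probability vector by \eqref{eq:marg-norm} and where the double marginalisation $(q^U)^{B_i}$ collapses to $q^{\ts B_i}$ by the transitivity relation \eqref{eq:marginal-gen} (with $B_i\subseteq U\subseteq S$).

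The heart of the matter is the middle equality. The key idea is that partitions $\cC$ with $\cC\preccurlyeq\cD$ are in bijection with tuples of partitions of the blocks of $\cD$, via $\cC = \bigsqcup_{i}\cC|^{\pa}_{D_i}$. Setting $B_i := D_i\cap U$ and using $U = \bigsqcup_i B_i$, the restriction decomposes block-by-block as $\cC|^{\pa}_{U} = \bigsqcup_{i}(\cC|^{\pa}_{D_i})|^{\pa}_{B_i}$; and since $\cA\preccurlyeq\cB = \cD|^{\pa}_{U}$ we likewise have $\cA = \bigsqcup_i\cA|^{\pa}_{B_i}$, so the single constraint $\cC|^{\pa}_{U} = \cA$ splits into the independent per-block conditions $(\cC|^{\pa}_{D_i})|^{\pa}_{B_i} = \cA|^{\pa}_{B_i}$. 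The sum therefore factorises,
\[
  \sum_{\substack{\udo{\cC}\preccurlyeq\cD\\ \cC|^{\pa}_{U} = \cA}}
  \prod_{i=1}^{|\cD|} q^{\ts D_i}\bigl(\cC|^{\pa}_{D_i}\bigr) \, = \,
  \prod_{i=1}^{|\cD|}
  \Biggl(\,\sum_{\substack{\cC_i\in\PP(D_i)\\ \cC_i|^{\pa}_{B_i}
  = \cA|^{\pa}_{B_i}}} \! q^{\ts D_i}(\cC_i)\Biggr) \ts ,
\]
and I would then evaluate the factors separately. For the $|\cB|$ indices with $B_i\ne\varnothing$, the $i$th factor is the marginal $(q^{D_i})^{B_i}(\cA|^{\pa}_{B_i}) = q^{\ts B_i}(\cA|^{\pa}_{B_i})$ by \eqref{eq:def-marginal} and \eqref{eq:marginal-gen}; for the remaining blocks, with $D_i\subseteq S\setminus U$, the constraint is vacuous and the factor equals $\sum_{\cC_i\in\PP(D_i)} q^{\ts D_i}(\cC_i) = 1$ by normalisation \eqref{eq:marg-norm}. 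After the trivial factors drop out, what remains is $\prod_{i=1}^{|\cB|} q^{\ts B_i}(\cA|^{\pa}_{B_i})$, which is the claim.

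The main obstacle I anticipate is purely the clean separation of the constraint $\cC|^{\pa}_U = \cA$ into independent per-block conditions — this is exactly where the hypothesis $\cA\preccurlyeq\cB$ enters — together with the correct disposal of the blocks of $\cD$ that miss $U$, which are precisely the factors that must sum away to $1$ and whose very presence is the reason that a sum over $\cC$, rather than a single term, appears on the left in the first place.
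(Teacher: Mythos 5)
Your proposal is correct and follows essentially the same route as the paper's own proof: unfolding $\gamma$ into products of marginals via Eq.~\eqref{eq:def-marginal}, decomposing $\cC\preccurlyeq\cD$ as a joining of partitions of the blocks of $\cD$, splitting the constraint $\cC|^{\pa}_{U}=\cA$ into independent per-block conditions (using $\cA\preccurlyeq\cB$), and letting the blocks disjoint from $U$ sum to $1$ by normalisation. The only difference is cosmetic — you keep a unified indexing $B_i = D_i\cap U$ (possibly empty) where the paper relabels the blocks of $\cD$ into those meeting $U$ and those missing it.
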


\begin{proof}
  Let $\cA,\cB \in \PP(U)$ with $\cA \preccurlyeq \cB$ be given, and
  assume $\cB = \{ B_{1}, \dots , B_{r}\}$.  With our definition of
  the $\gamma$-function in Eq.~\eqref{eq:def-gamma} together with
  Eqs.~\eqref{eq:marginal-restricted} and \eqref{eq:def-marginal},
  the right-hand side evaluates as
\begin{equation}\label{eq:gamma-U}
   \gamma^{U}\! (q^{\ts U}\! ; \cA,\cB\ts ) \, = \, \prod_{i=1}^{r}
   \sum_{\substack{\cC\in\ts\PP(U) \\
    \cC|^{\pa}_{\nts B_i}\! = \cA|^{\pa}_{\nts B_i}}}
   \!\!\! q^{\ts U}\! (\cC) \, = \, 
   \prod_{i=1}^{r}  q^{\ts B_i}  (\cA|^{\pa}_{\nts B_i}) \ts .
\end{equation}
For an arbitrary $\cD\in\ts\PP(S)$ with $\cD = \{D_1, \ldots,
D_{|\cD|}\}$ and $\cD|^{\pa}_{U} = \cB$, this is now to be compared
with
\begin{equation}\label{eq:intermediate}
   \sum_{\substack{\PP(S) \owns\ts  \udo{\cC}
           \preccurlyeq \cD \\ \cC |^{\pa}_{\nts U} = \cA}}
   \! \! \gamma^{S} (q^{S}; \cC,\cD) \; = 
   \sum_{\substack{\PP(S) \owns\ts  \udo{\cC}
           \preccurlyeq \cD \\ \cC |^{\pa}_{\nts U} = \cA}}\,
   \prod_{i=1}^{|\cD|}  
   \sum_{\substack{\cE\in\ts\PP(S) \\ \cE|^{\pa}_{\nts D_i}\! 
        = \cC|^{\pa}_{\nts D_i}}}
   \!\!\! q^{\ts S}\! (\cE) \; = 
   \sum_{\substack{\PP(S) \owns\ts \udo{\cC}
           \preccurlyeq \cD \\ \cC |^{\pa}_{\nts U} = \cA}}
   \; \prod_{i=1}^{|\cD|} q^{\ts D_i}  (\cC|^{\pa}_{\nts D_i}) \ts ,
\end{equation}
where the last step is once again a consequence of
Eq.~\eqref{eq:def-marginal}.  To proceed, we now rewrite the partition
$\cD$ as $\cD = \{ D^{\prime}_{1},\dots, D^{\prime}_{r},
D^{\prime\prime}_{1},\dots ,D^{\prime\prime}_{s}\}$ with $r=|\cB|$ and
$D^\prime_{i} \cap U = B_{i}$ for $1\leqslant i\leqslant r$ (which is
without loss of generality) and $D^{\prime\prime}_{j} \cap U =
\varnothing$ for $1\leqslant j\leqslant s$. Note that $s=0$ is
possible, in which case no $D^{\prime\prime}_{j}$ is present and $r =
\lvert \cD \rvert$.

Now, any partition $\cC\in\ts\PP(S)$ in the summation in
Eq.~\eqref{eq:intermediate} must be a joining of the form
\[
   \cC \, = \, \cC^\prime_{1} \sqcup \cC^\prime_{2} \sqcup \dots
   \sqcup \cC^\prime_{r} \sqcup \cC^{\prime\prime}_{1} \sqcup 
   \dots \sqcup \cC^{\prime\prime}_{s}
\]
with $\cC^\prime_{i} \in \PP (D^\prime_{i})$ and
$\cC^{\prime\prime}_{j} \in \PP (D^{\prime\prime}_{j})$, subject to
the additional condition that we always have $\cC |^{\pa}_{U} = \cA$,
which means $\cC^\prime|^{\pa}_{\nts B_i} = \cA|^{\pa}_{\nts B_i}$ for
all $1\leqslant i\leqslant r$. The summation on the right-hand side of
Eq.~\eqref{eq:intermediate} can now be broken into smaller sums that
can be absorbed into the factors, which amounts to refining each block
of $\cD$ individually. This turns the right-hand side of
Eq.~\eqref{eq:intermediate} into a product of two terms, namely
\[
   \biggl(\,\prod_{i=1}^{r} \sum_{\substack{\sigma\in\ts\PP(D^\prime_i) \\
   \sigma|^{\pa}_{\nts B_i}\! = \cA|^{\pa}_{\nts B_i}}} q^{D^\prime_i} (\sigma)
   \biggr) \biggl(\, \prod_{j=1}^{s}\,
   \sum_{\tau \in \PP(D^{\prime\prime}_{j})}\,
   q^{D^{\prime\prime}_{j}} (\tau)\biggr).
\]
Now, the sum in each factor of the second product is clearly $1$
because $q^{D^{\prime\prime}_{j}}$ is a probability vector on
$\PP(D^{\prime\prime}_{j})$, compare~Eq.~\eqref{eq:marg-norm}.
Consequently, the entire second term is $1$, which is also true if
$s=0$ (in which case we have the empty product here).  Likewise, the
sum in the $i$th factor of the first product equals $q^{\ts B_i}
(\cA|^{\pa}_{\nts B_i})$ by Eq.~\eqref{eq:marginal-restricted},
because $B_{i} \subseteq D^\prime_{i}$ by our assumptions. We thus get
\[
   \sum_{\substack{\PP(S) \owns\ts  \udo{\cC}
           \preccurlyeq \cD \\ \cC |^{\pa}_{\nts U} = \cA}}
   \! \! \gamma^{S} (q^{S}; \cC,\cD) \, = \,
   \prod_{i=1}^{r}
    q^{\ts B_i}  (\cA|^{\pa}_{\nts B_i}).
\]
Together with Eqs.~\eqref{eq:gamma-U} and \eqref{eq:intermediate},
this proves the lemma.
\end{proof}

The marginalisation consistency can now be stated as follows.
\begin{prop}\label{prop:marg-consistent}
  Let\/ $S$ be a finite set and\/ $U\nts \subset S$ a non-empty
  subset. If the family of probability vectors\/ $\{ a^{S}_{t} \nts
  \mid t\geqslant 0\}$ is a solution of the Cauchy problem of
  Eq.~\eqref{eq:gen-reco-coeff-DGL} with initial condition\/
  $a^{S}_{0} (\cB\ts ) = \delta^{S}\nts (\cB,\pmax)$, the marginalised
  family\/ $\{ a^{U}_{t} \nts\nts \mid t\geqslant 0\}$, with\/
  $a^{U}_{t}\!  (\cA)$ defined according to
  Eq.~\eqref{eq:def-marginal} for all\/ $\cA\in\ts\PP(U)$, solves the
  corresponding Cauchy problem for the subsystem, with initial
  condition\/ $a^{U}_{0}\! (\cA) = \delta^{U}\!  (\cA,\pmax)$ and the
  marginal recombination rates of
  Eq.~\eqref{eq:def-induced-rates}. Explicitly, it satisfies the ODE
  \begin{equation}\label{eq:a-marg}
     \dot{a}^{\ts U}_{t} \nts (\cA) \;  = \; - \bigl( 
        \rtot{} - \varrho(\pmax)\bigr)
    \ts a^{\ts U}_{t}  (\cA) \, + \! \! 
         \sum_{\cA\preccurlyeq \udo{\cB} \prec \pmax}
    \!\!  \varrho^{\ts U} (\cB) 
    \prod_{i=1}^{|\cB|} a^{B_i}_{t} (\cA|^{\pa}_{\nts B_i})\ts . 
  \end{equation}
  The analogous statement remains true for a general probability
  vector $a^{S}_{0}\!$ as initial condition, with the marginalised
  initial condition $a^{U}_{0}\!$ according to 
  Eq.~\eqref{eq:def-marginal} on the subsystem.
\end{prop}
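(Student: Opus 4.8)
The plan is to verify directly that the marginalised family $\{a^{U}_{t}\}$ satisfies the $U$-analogue of Eq.~\eqref{eq:gen-reco-coeff-DGL}, and then to invoke the uniqueness of the Cauchy problem from Proposition~\ref{prop:reduction-props} (applied on $\PP(U)$ with the non-negative marginal rates $\varrho^{U}$) to identify it as \emph{the} solution of the subsystem problem. First I would fix $\cA\in\ts\PP(U)$, differentiate the marginalisation
\[
   a^{U}_{t}(\cA) \, = \! \sum_{\substack{\cC\in\ts\PP(S)\\ \cC|^{\pa}_{U}=\cA}} \! a^{S}_{t}(\cC)
\]
from Eq.~\eqref{eq:def-marginal}, and substitute the $S$-equation \eqref{eq:gen-reco-coeff-DGL} for each $\dot a^{S}_{t}(\cC)$. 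The linear term collapses at once to $-\rtot{}\, a^{U}_{t}(\cA)$, using $\rtot{U}=\rtot{S}=\rtot{}$ from Eq.~\eqref{eq:rtot}, so all the work lies in the nonlinear term.

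The core of the argument is the reorganisation of the double sum
\[
   \sum_{\substack{\cC\in\ts\PP(S)\\ \cC|^{\pa}_{U}=\cA}}\,
   \sum_{\udo{\cD}\succcurlyeq\cC} \gamma^{S}(a^{S}_{t};\cC,\cD)\,\varrho^{S}(\cD) \ts .
\]
I would interchange the order of summation, fixing the coarser partition $\cD\in\ts\PP(S)$ first, and then group these $\cD$ according to their restriction $\cB:=\cD|^{\pa}_{U}\in\ts\PP(U)$ (note that a contributing $\cC$ forces $\cA\preccurlyeq\cB$). This reduces matters to the inner sum $\sum_{\cC\preccurlyeq\cD,\,\cC|^{\pa}_{U}=\cA}\gamma^{S}(a^{S}_{t};\cC,\cD)$, which is precisely the left-hand side of Lemma~\ref{lem:marginal} and therefore equals $\gamma^{U}(a^{U}_{t};\cA,\cB)$. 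The key point---and the main obstacle---is exactly the surprising feature of that lemma: this inner sum depends on $\cD$ only through its restriction $\cB=\cD|^{\pa}_{U}$, and not on $\cD$ itself. This $\cD$-independence is what allows the remaining outer sum over $\cD$ to act solely on the rates, collapsing $\sum_{\cD|^{\pa}_{U}=\cB}\varrho^{S}(\cD)$ to the marginal rate $\varrho^{U}(\cB)$ by the definition \eqref{eq:def-induced-rates}. The nonlinear term thus becomes $\sum_{\cB\succcurlyeq\cA}\gamma^{U}(a^{U}_{t};\cA,\cB)\,\varrho^{U}(\cB)$, so $\{a^{U}_{t}\}$ indeed satisfies the $U$-version of Eq.~\eqref{eq:gen-reco-coeff-DGL}.

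It then remains to match the initial condition and to obtain the explicit form \eqref{eq:a-marg}. Marginalising $a^{S}_{0}(\cC)=\delta^{S}(\cC,\pmax)$ yields $a^{U}_{0}(\cA)=\delta^{U}(\cA,\pmax)$, since the only contributing $\cC$ is $\cC=\pmax$, whose restriction is $\{U\}=\pmax$; uniqueness on $\PP(U)$ then completes the identification. For \eqref{eq:a-marg}, I would split off the $\cB=\pmax$ summand, using $\gamma^{U}(a^{U}_{t};\cA,\pmax)=a^{U}_{t}(\cA)$ (the $U$-analogue of Remark~\ref{rem:reduction}) to absorb it into the linear coefficient, and insert the product representation $\gamma^{U}(a^{U}_{t};\cA,\cB)=\prod_{i=1}^{|\cB|}a^{B_{i}}_{t}(\cA|^{\pa}_{B_{i}})$ supplied by Eq.~\eqref{eq:gamma-U}. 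Finally, the statement for a general probability vector $a^{S}_{0}$ as initial condition needs no new work: the ODE computation above is independent of the initial data, the two initial conditions correspond under the marginalisation \eqref{eq:def-marginal}, and uniqueness again closes the argument.
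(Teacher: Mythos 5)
Your proposal is correct and follows essentially the same route as the paper's own proof: differentiate the marginalisation, substitute the $S$-level ODE, interchange the double sum and group the coarser partitions $\cD$ by their restriction $\cB=\cD|^{\pa}_{U}$, invoke Lemma~\ref{lem:marginal} for the crucial $\cD$-independence of the inner $\gamma$-sum so that the rates collapse to $\varrho^{U}$ via Eq.~\eqref{eq:def-induced-rates}, and then extract Eq.~\eqref{eq:a-marg} from the product representation. Your explicit appeal to uniqueness from Proposition~\ref{prop:reduction-props} is a harmless (and slightly more careful) addition to what the paper leaves implicit.
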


\begin{proof}
  Let $\{ a^{S}_{t} \mid t\geqslant 0\}$ be a solution of
  Eq.~\eqref{eq:gen-reco-coeff-DGL}, and let $\cA\in\ts\PP(U)$ be
  fixed. Then, we have
\[
\begin{split}
   \dot{a}^{\ts U}_{t} \nts (\cA) \; & = 
    \sum_{\substack{\cC\in\ts\PP(S) \\ \cC|^{\pa}_{U} = \cA}}
     \! \dot{a}^{S}_{t} (\cC)\, = \!
    \sum_{\substack{\cC\in\ts\PP(S) \\ \cC|^{\pa}_{U} = \cA}}
    \Bigl(-\rtot{} \, a^{S}_{t} (\cC) +
    \sum_{\udo{\cD}\succcurlyeq\cC} \gamma^{S} (a^{S}_{t}; \cC,\cD) \,
    \varrho^{S} (\cD) \Bigr) \\
  & = \, -\rtot{} \, a^{U}_{t} \nts\nts (\cA) \; +
    \sum_{\substack{\cC\in\ts\PP(S) \\ \cC|^{\pa}_{U} = \cA}}\,
    \sum_{\udo{\cD}\succcurlyeq\cC} \gamma^{S} (a^{S}_{t}; \cC,\cD) \,
    \varrho^{S} (\cD) \\
  & = \, \, -\rtot{} \, a^{U}_{t} \nts\nts (\cA) \; +
    \sum_{\substack{\cD\in\ts\PP(S) \\ \cD|^{\pa}_{U} 
    \succcurlyeq \cA}}\, \biggl(\,\sum_{\substack{\udo{\cC}
           \preccurlyeq\cD \\ \cC|^{\pa}_{U}=\cA }} 
     \! \! \gamma^{S} (a^{S}_{t}; \cC,\cD)\biggr)\,
    \varrho^{S} (\cD) \\
  & = \, -\rtot{} \, a^{U}_{t} \nts\nts (\cA) \; +
    \sum_{\udo{\cB} \succcurlyeq \cA}\,
    \sum_{\substack{\cD\in\ts\PP(S) \\ \cD|^{\pa}_{U} = \cB}} 
       \! \varrho^{S} (\cD)\!
    \sum_{\substack{\udo{\cC}\preccurlyeq\cD \\ \cC|^{\pa}_{U} = \cA}}
    \! \!  \gamma^{S} (a^{S}_{t}; \cC,\cD) \\
  & = \,  -\rtot{} \, a^{U}_{t} \nts\nts (\cA) \; +
    \sum_{\udo{\cB} \succcurlyeq \cA}\!
     \gamma^{U} \! (a^{U}_{t}\! ; \cA,\cB\ts )\!
    \sum_{\substack{\cD\in\ts\PP(S) \\ \cD|^{\pa}_{U} = \cB}} 
      \! \! \varrho^{S} (\cD) \\
  & = \, - \big (\rtot{} - \varrho(\pmax) \big )
      \, a^{U}_{t} \nts\nts (\cA) \; +
    \sum_{\cA \preccurlyeq \udo{ \cB} \prec \pmax}
    \varrho^{\ts U} \! (\cB\ts )
    \prod_{i=1}^{|\cB|} a^{B_i}_{t} (\cA|^{\pa}_{\nts B_i})\ts ,
\end{split}
\]
where we have used Lemma~\ref{lem:marginal} in the second-last step.
This made the $\gamma$-term independent of $\cD$, which in turn
allowed the last step on the basis of Eq.~\eqref{eq:def-induced-rates}
and another application of Lemma~\ref{lem:marginal}. The second-last
step shows that the marginalised family indeed satisfies the proper
ODE for the subsystem as defined for $\varnothing \ne U \subset S$
via Eq.~\eqref{eq:gen-reco-coeff-DGL}, as it must; the last step then
leads to Eq.~\eqref{eq:a-marg}.

It is an easy exercise that $\delta^{\ts U}\! (\cA,\pmax)$ is the
initial condition for the subsystem that emerges as the
marginalisation of the original initial condition $\delta^{S}
(\cB,\pmax)$. Since $\cA\in\ts\PP(U)$ was arbitrary, the main
statement is proved.  The last claim is an obvious generalisation.
\end{proof}

Note that Proposition~\ref{prop:marg-consistent} can also be viewed as
a consequence of Proposition~\ref{prop:measure-subsystem} and
Theorem~\ref{thm:equivalence}.  We have nevertheless opted for an
explicit verification because several steps of the above proof will
reappear when we proceed to a solution of the recombination equation.

\section{The backward point of view: Partitioning process}
\label{sec:partitioning}

Now that we have understood the structure of the ODE for $a^{}_t$ in
the usual (forward) direction of time, let us consider a related
(stochastic) process that will provide an additional meaning for
$a^{}_t$.  Let $\{\varSigma^S_t\}^{\pa}_{t\geqslant 0}$ be a Markov
chain in continuous time with values in $\PP(S)$ that is constructed
as follows. Start with $\varSigma^S_0= \pmax$. If the current state is
$\varSigma^S_t=\cC$, then part $C_i$ of $\cC$ is replaced by $\sigma
\in \PP(C_i)$ at rate $\varrho^{C_i} (\sigma)$, independently of all
other parts. That is, the transition from $\cC$ to $(\cC \setminus
C_i) \sqcup \sigma$ occurs at rate $\varrho^{C_i} (\sigma)$ for for
all $\pmax \ne \sigma \in \PP(C_i) $ and $1 \leqslant i \leqslant
|\cC|$.  Obviously, $\{\varSigma^S_t\}^{\pa}_{t\geqslant 0}$ is a
process of progressive refinements, which we call the
\emph{partitioning process}.  Likewise, we define the partitioning
process $\{\varSigma^U_t\}^{\pa}_{t\geqslant 0}$ on $\PP(U)$ for
$\varnothing \ne U\subset S$ in the same way as
$\{\varSigma^S_t\}^{\pa}_{t\geqslant 0}$, but based on the marginal
recombination rates $\varrho^U$.

Now let $P_{t}^{U} \! (\cC, \cD) := \mathbf{P}\bigl(\varSigma_t^U=\cD
\mid \varSigma_0^U = \cC\bigr)$, where $\mathbf{P}$ denotes
probability. That is, $P_{t}^{U} \! (\cC, \cD)$ is the transition
probability (in standard notation) from `state' $\cC$ to `state' $\cD$
during a time interval of length $t$.  Since
$\{\varSigma^U_t\}^{\pa}_{t\geqslant 0}$ is a process of progressive
refinements, it is obvious that
\begin{equation}\label{eq:refinement}
  P_{t}^{U}\! (\cC, \cD) \, = \, 0 \quad \text{ for }
   \cD \not\preccurlyeq \cC.
\end{equation}
Furthermore, since the parts are (conditionally) independent 
once they appear, we have
\begin{equation}\label{eq:cond-independence}
  P_{t}^{U} \! (\cC, \cD) \, = \, \prod_{i=1}^{|\cC|}  
  P^{C_i}_{t} \nts (\pmax, \cD|^{\pa}_{C_i}) 
  \quad \text{for }  \cD \preccurlyeq \cC .
\end{equation}

Let us now consider the distribution of $\varSigma_{t}^{U} \! $, that
is, the collection $\big\{ P_{t}^{U} \! (\pmax, \cA)\big\}_{\cA \in
  \PP(U)}$.  Clearly, the initial value is $P_{0}^{U} \! (\pmax,
\mathcal{A}) = \delta(\cA,\pmax)$.  The time evolution is given by
\begin{equation} 
\begin{split}\label{eq:KBE}
  \frac{\dd}{\dd t} P_t^{U}\! (\pmax, \cA)
   \, & = \,  -  \bigl(\rtot{} - \varrho^{U} \! (\pmax) \bigr) 
       P_{t}^{U} \! (\pmax, \cA) \, +
     \sum_{\udo \cB \prec \pmax} \varrho^{U}\! (\cB) 
        \ts \ts P_{t}^{U} \! (\cB, \cA) \\
     & =\, -  \bigl(\rtot{} - \varrho^{U} \! (\pmax) \bigr) 
        P_{t}^{U} \! (\pmax, \cA) \, + \!
  \sum_{\cA \preccurlyeq \udo \cB \prec \pmax} \! \varrho^{U} \! (\cB)
  \prod_{i=1}^{|\cB|}  P^{B_i}_{t} \nts (\pmax, \cA|_{B_i}).
\end{split}
\end{equation}
Here, the first step is an application of the \emph{Kolmogorov
  backward equation} (for background, see \cite[Ch.~XVII.8]{Feller} or
\cite[Ch.~2.1]{Norris}), namely, the decomposition according to the
first transition away from $\pmax$, which is to state $\cB$ with rate
$\varrho^{U}\! (\cB)$ for all $\cB \prec \pmax$. The second step uses
Eqs.~\eqref{eq:refinement} and \eqref{eq:cond-independence}.  The
argument is illustrated in Figure~\ref{fig:tree}.

\begin{figure}[t]

\begin{picture}(280,150)(70,110)

\put(200, 120){\makebox (0, 0){$\bullet$}}
\put(220, 120){\makebox (0, 0){$\pmax$}}

\multiput(200, 120)(,){1}{\line(1, 1){30}}
\multiput(200, 120)(,){1}{\line(-1, 2){15}}
\multiput(200, 120)(,){1}{\line(-1, 1){30}}
\multiput(200, 120)(,){1}{\line(-1, 4){8}}
\put(215, 150){\makebox (0, 0){$\cdots$}}

\put(170, 150){\makebox (0, 0){$\bullet$}}
\put(230, 150){\makebox (0, 0){$\bullet$}}
\put(185, 150){\makebox (0, 0){$\bullet$}}
\put(192, 150){\makebox (0, 0){$\bullet$}}

\put(205, 200){\makebox (0, 0){$\vdots$}}

\multiput(170, 150)(,){1}{\line(-1, 4){8}}
\multiput(170, 150)(,){1}{\line(-1, 1){30}}
\multiput(170, 150)(,){1}{\line(-2, 1){57}}

\multiput(185, 150)(,){1}{\line(0, 1){30}}
\multiput(192, 150)(,){1}{\line(0, 1){30}}
\multiput(230, 150)(,){1}{\line(1, 1){30}}

\put(140, 180){\makebox (0, 0){$\bullet$}}
\put(162, 180){\makebox (0, 0){$\bullet$}}
\put(185, 180){\makebox (0, 0){$\bullet$}}
\put(192, 180){\makebox (0, 0){$\bullet$}}
\put(260, 180){\makebox (0, 0){$\bullet$}}
\put(111, 180){\makebox (0, 0){$\bullet$}}

\put(225,180){\makebox (0, 0){$\cdots$}}

\put(100, 220){\makebox (0, 0){$\bullet$}}
\put(120, 220){\makebox (0, 0){$\bullet$}}
\put(130, 220){\makebox (0, 0){$\bullet$}}
\put(150, 220){\makebox (0, 0){$\bullet$}}
\put(160, 220){\makebox (0, 0){$\bullet$}}
\put(180, 220){\makebox (0, 0){$\bullet$}}
\put(190, 220){\makebox (0, 0){$\bullet$}}
\put(200, 220){\makebox (0, 0){$\bullet$}}
\put(220, 220){\makebox (0, 0){$\bullet$}}
\put(230, 220){\makebox (0, 0){$\bullet$}}
\put(250, 220){\makebox (0, 0){$\bullet$}}
\put(270, 220){\makebox (0, 0){$\bullet$}}
\put(280, 220){\makebox (0, 0){$\bullet$}}
\put(320, 120){\makebox (0, 0){$-$}}

\multiput(320, 120)(,){1}{\line(0, 1){30}}

\put(320, 150){\makebox (0, 0){$-$}}
\put(250, 150){\makebox (0, 0){$\mathcal{B}$}}
\multiput(320, 150)(,){1}{\line(0, 1){30}}
\put(320, 220){\makebox (0, 0){$-$}}
\put(320, 180){\makebox (0, 0){$-$}}
\put(280, 180){\makebox (0, 0){$\mathcal{C}$}}
\multiput(320, 180)(,){1}{\vector(0, 1){60}}

\put(300, 220){\makebox (0, 0){$\mathcal{A}$}}

\end{picture}
\caption{\label{fig:tree} A sketch of the partitioning process. Each
  dot represents one part of the corresponding partition, while the
  lines indicate the partitioning.}
\label{fig}
\end{figure}
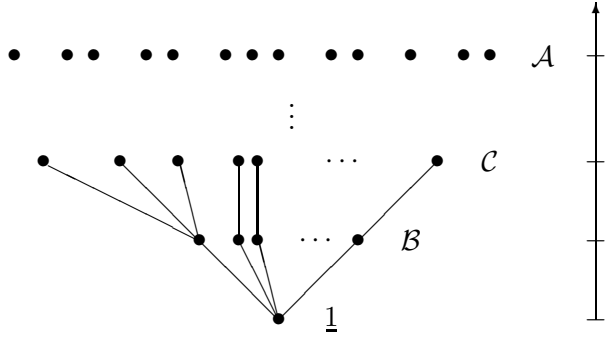

Comparing Eq.~\eqref{eq:KBE} with Eq.~\eqref{eq:a-marg}, we see that
the quantities $P^{U}_{t} \! (\pmax,\cA)$ and $a^{\ts U}_{t} \nts\nts
(\cA)$, with $U \subseteq S$ and $\cA \in \PP(U)$, satisfy the same
collection of ODEs, and the initial values also agree at $P_{0}^{U} \!
(\pmax, \cA) \nts =\delta_{\mathcal{A},\pmax}=a^{U}_{0}\!
(\mathcal{A})$.  By an obvious inductive argument, the two families
can thus be identified. We have therefore shown the following result.

\begin{theorem}\label{thm:a-part}
  The probability vector\/ $a_{t}^{U}\!$ agrees with the distribution
  of the partitioning process\/ $\varSigma^{U}_{t}\!$.  Explicitly, we
  have
\[
      a_{t}^{U} \! (\cA) \, = \, 
     P_{t}^{U} \! (\pmax, \mathcal{A}) \, = \, 
     \mathbf{P} \bigl(\varSigma_{t}^{U} = \cA \mid 
        \varSigma_{0}^{U} = \pmax \bigr)
\]
for all\/ $U \subseteq S$, $\cA \in \PP(U)$ and all\/ 
$t \geqslant 0$.   \qed
\end{theorem}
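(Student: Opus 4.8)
The plan is to identify the two one-parameter families $\{a^{U}_{t}(\cA)\}$ and $\{P^{U}_{t}(\pmax,\cA)\}$ by showing they solve the same Cauchy problem and then appealing to the uniqueness guaranteed by Proposition~\ref{prop:reduction-props}. The groundwork has essentially been laid: Eq.~\eqref{eq:a-marg} gives the ODE satisfied by the marginalised coefficients $a^{U}_{t}$, and Eq.~\eqref{eq:KBE} gives the ODE satisfied by the transition probabilities $P^{U}_{t}(\pmax,\cA)$. A direct visual comparison shows these are term-for-term identical, provided we match the factors $a^{B_i}_{t}(\cA|^{\pa}_{\nts B_i})$ with $P^{B_i}_{t}(\pmax,\cA|^{\pa}_{\nts B_i})$ in the respective nonlinear (product) terms.

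First I would note that the system for $\{P^{U}_{t}\}$ is genuinely a coupled family indexed by all non-empty subsets $U\subseteq S$: the right-hand side of Eq.~\eqref{eq:KBE} for a given $U$ involves the transition probabilities $P^{B_i}_{t}(\pmax,\cA|_{B_i})$ on the strictly smaller subsystems $B_i \subsetneq U$ (since $\cB\prec\pmax$ forces each $B_i$ to be a proper subset of $U$). The same holds for the $a$-system via Eq.~\eqref{eq:a-marg}. This nested structure is exactly what makes an inductive identification work, so I would organise the proof as an induction on $|U|$.

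For the base case $|U|=1$, there is a single partition $\cA=\pmax$, both families are constantly equal to $1$, and the identity is trivial. For the inductive step, I would assume $a^{W}_{t}(\cA)=P^{W}_{t}(\pmax,\cA)$ for all proper subsystems $W$ with $|W|<|U|$ and all $\cA\in\PP(W)$. Under this hypothesis, every factor appearing in the product term of Eq.~\eqref{eq:a-marg}, namely $a^{B_i}_{t}(\cA|^{\pa}_{\nts B_i})$ with $B_i\subsetneq U$, equals the corresponding factor $P^{B_i}_{t}(\pmax,\cA|_{B_i})$ in Eq.~\eqref{eq:KBE}. Hence the two families satisfy the \emph{same} ODE on $\PP(U)$, with the same linear decay coefficient $\rtot{}-\varrho(\pmax)=\rtot{}-\varrho^{U}(\pmax)$ (recalling Eq.~\eqref{eq:rtot} and the remark that $\varrho^{U}(\pmax)$ is immaterial). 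Since they also share the initial condition $\delta(\cA,\pmax)$, the uniqueness part of Proposition~\ref{prop:reduction-props}, applied to the subsystem $U$, forces $a^{U}_{t}(\cA)=P^{U}_{t}(\pmax,\cA)$ for all $t\geqslant 0$ and all $\cA\in\PP(U)$, completing the induction.

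The main obstacle is bookkeeping rather than conceptual: one must verify carefully that the two right-hand sides match summand by summand, in particular that the decay rate $\varrho^{U}(\pmax)$ subtracted in Eq.~\eqref{eq:KBE} is consistent with the $\varrho(\pmax)$ in Eq.~\eqref{eq:a-marg} under marginalisation, and that the sum $\sum_{\cD|^{\pa}_{U}=\cB}\varrho^{S}(\cD)=\varrho^{U}(\cB)$ from Eq.~\eqref{eq:def-induced-rates} correctly recovers the marginal rate governing the $\cB$-transition in the partitioning process. Once the nesting is made explicit and the induction variable fixed as $|U|$, the identification is immediate from the coincidence of the ODEs and initial data together with uniqueness.
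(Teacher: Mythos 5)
Your proposal is correct and follows essentially the same route as the paper: the authors also identify $a^{U}_{t}$ with $P^{U}_{t}(\pmax,\cdot)$ by comparing Eq.~\eqref{eq:a-marg} with Eq.~\eqref{eq:KBE}, noting the common initial condition $\delta(\cA,\pmax)$, and then invoking what they call ``an obvious inductive argument.'' Your write-up merely makes that induction on $\lvert U\rvert$ explicit (base case, nested structure, uniqueness of the resulting Cauchy problem on each level), which is exactly the intended argument.
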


As the reader may have noticed, the partitioning process was defined
so as to reflect the action of recombination on the ancestry of the
genetic material of an individual \emph{backward in time.}  Namely, if
a sequence is pieced together according to a partition $\cA= \{A_1,
\ldots, A_r\}$ from various parents forwards in time, this implies
that the sequence is partitioned into the parts of $\cA$ when we look
backwards in time, where each part is associated with a different
parent. In this light, Theorem~\ref{thm:a-part} means the following:
If we follow the ancestry of the genetic material of an individual
from the present population (that is, starting at time $t$) backward
in time, then $a_t^S(\cA)$ is the probability that the sites are
partitioned into different parents according to $\cA$ at time $t$
before the present. That is, the sites in $A_1$ go back to one
individual in the initial population, the sites in $A_2$ go back to a
second individual and so on, and the sites in $A_r$ go back to an
$r$th individual.  The partitioning process is the deterministic limit
of the corresponding stochastic process in finite populations, namely,
the \emph{ancestral recombination graph} (ARG); see
\cite[Ch.~3.4]{Durrett}, but note that the ARG is usually described
for single crossovers only.

At the same time, the above suggests a nice interpretation of the
solution of the recombination equation in
Theorem~\ref{thm:equivalence}.  Indeed, the type distribution at
present may be obtained in a two-step procedure. In the first step,
one decides how the sites of a present individual have been pieced
together from different parents from the initial population.  In the
second step, one assigns letters to the various parts (that is,
parents): If the corresponding partition is $\cA=\{A_1, \ldots,
A_r\}$, then the letters for the sites in $A_1$ are drawn from
$\pi_{\! A_1}^{} \nts . \ts \omega^{}_0$, and so on, until the letters
for the sites in $A_r$ are drawn from $\pi_{\! A_r}^{} \nts . \ts
\omega^{}_0$ --- and this independently, due to the infinite
population.  The resulting type distribution of our individual today
is $R_{\cA}(\omega^{}_0)$.  What we have described here in words
amounts to a \emph{duality relation} between the recombination
dynamics forward in time and the partitioning process backward in
time. We do not go into detail here; for more, see \cite{BW} and
\cite{EPB}.

\section{Generic case:\ Recursive solution}
\label{sec:gen-sol}

Motivated by the M\"obius formula in Eq.~\eqref{eq:lin-decay} and by a
reminiscent structure in the solution of the discrete recombination
equation, compare \cite{EB-ICM}, we now restrict
Eq.~\eqref{eq:gen-reco-coeff-DGL} to the forward-invariant simplex
$\cP ( \PP (S))$, compare Proposition~\ref{prop:reduction-props}, and
make the \emph{ansatz}
\begin{equation}\label{eq:gen-ansatz}
    a^{U}_{t} \! (\cA) \, = \, \sum_{\udo{\cB}\succcurlyeq \cA} 
    \theta^{\ts U}\! (\cA,\cB\ts ) \, \ee^{- \psi^{\ts U}\! (\cB\ts )\ts  t}
\end{equation}
for any non-empty $U\nts \subseteq S$, with decay rates $ \psi^{\ts U}\!
$ and coefficient functions $\theta^{\ts U}\! $.  Our aim is to
determine $\theta^{\ts U}\! $ by a comparison of coefficients once we
know a (recursive) formula for the rate function $ \psi^{\ts U}\! $
together with linear independence of the exponentials in
Eq.~\eqref{eq:gen-ansatz}. Note that this will then also solve the ODE
\eqref{eq:gen-reco-coeff-DGL} with an initial condition from $\cM_{+}
( \PP (S))$ via multiplying the right-hand side of
Eq.~\eqref{eq:gen-ansatz} by the norm of the initial condition.

From our results of Section~\ref{sec:linear}, see 
Theorem~\ref{thm:better-than-nothing} and
Remark~\ref{rem:lin-decay-inverse} in comparison 
with Eq.~\eqref{eq:gen-ansatz}, we know that
\begin{equation}\label{eq:decay-max}
     \psi^{\ts U} \! (\pmax) \, = \, \chi^{\ts U} \!(\pmax)  \, =
     \sum_{\cA\ne \pmax} \varrho^{\ts U} \! (\cA) \, = \,
     \rtot{} - \varrho^{\ts U} \! (\pmax) \ts .
\end{equation}
Note that we have used the fact that $\mu^{U}\! (\pmax,\pmax) =
\theta^{U}\! (\pmax,\pmax)=1$ for all $\varnothing\ne U\subseteq S$,
which is obvious.
In particular, this gives $\psi^{\ts U} \! (\cA) = 0$ for any $U$ with
$|U|=1$, which is the trivial limiting case with an empty sum on the
right-hand side.  All other values are now defined recursively by
\begin{equation}\label{eq:decay-rec}
   \psi^{\ts U} \! (\cA) \, := \sum_{i=1}^{|\cA|} \psi^{A_i} (\pmax) \ts ,
\end{equation}  
which is motivated by the well-known eigenvalues in the two-parent,
discrete-time analogue of our model; see \cite[Theorem~6.4.3]{Lyu} or
\cite{Nag}.  In the context of our partitioning process, $\psi^{A_i}
(\pmax)$ is the total rate of any further partitioning of part $A_i$,
and so, due to the independence of the parts, $\psi^{\ts U} \! (\cA)$
is the total rate of transitions out of state $\cA$. For any $U$ with
$|U|=2$, one has $\psi^{\ts U} \! (\cA) = \delta^{U}\! (\cA,\pmax)$,
while one finds $\psi^{\ts U} \! (\pmin) = 0$ for all non-empty
$U\nts\subseteq S$. This is consistent with the fact that no further
partitioning (or `decay') is possible when starting from the partition
$\pmin$ of $U$.

The decay rates defined this way have important summation properties
as follows.
\begin{lemma}\label{lem:decay-sum}
  Let\/ $S$ be a finite set and\/ $\varnothing \ne U\nts\subseteq S$
  be arbitrary, but fixed.  If\/ $\cA = \{ A_1, \ldots, A_r \}$ is a
  partition of\/ $U$, with\/ $r = \lvert \cA \rvert$, one has\/ $
  \psi^{U}\! (\cA) + \sum_{i=1}^{r} \varrho^{A_{i}} (\pmax) = r \ts
  \rtot{}$. If\/ $s^{\pa}_{\! \cA}$ is the number of the parts of\/
  $\cA$ that are singletons, this can be simplified as
\[
    \psi^{U}\! (\cA) \, + \! \sum_{\substack{i=1_{\vph} \\ |A_{i}| > 1}}^{r}
    \! \! \varrho^{A_{i}} (\pmax) \, = \, \bigl( r - s^{\pa}_{\! \cA}
    \bigr) \rtot{} \ts .
\]  
In particular, $\psi^{U}\! (\pmin) = 0$ holds for all non-empty\/
$U\nts\subseteq S$.
 
Moreover, for arbitrary\/ $\cB,\cC\in\PP(U)$ with\/
$\cC\preccurlyeq\cB$, one has
\[
   \psi^{U} \! (\cC) \, = \sum_{i=1}^{|\cB|} \psi^{B_i}
    (\cC|^{\pa}_{\nts B_i} ) \ts .
\]
\end{lemma}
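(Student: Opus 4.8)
The plan is to obtain all three assertions directly from the recursive definition \eqref{eq:decay-rec}, the anchoring formula \eqref{eq:decay-max}, and the level-independence of the total rate recorded in Eq.~\eqref{eq:rtot}. For the first identity, I would start from $\psi^{U}\!(\cA) = \sum_{i=1}^{r} \psi^{A_{i}}(\pmax)$ by \eqref{eq:decay-rec}, then apply \eqref{eq:decay-max} to each block, which gives $\psi^{A_{i}}(\pmax) = \rtot{A_i} - \varrho^{A_{i}}(\pmax)$. Since $\rtot{A_i} = \rtot{}$ for every subsystem by Eq.~\eqref{eq:rtot}, summing over $i$ yields $\psi^{U}\!(\cA) = r\,\rtot{} - \sum_{i=1}^{r}\varrho^{A_{i}}(\pmax)$, which is the claim after rearrangement.

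For the simplified form I would split $\sum_{i=1}^{r}\varrho^{A_{i}}(\pmax)$ into its singleton and non-singleton contributions. The key observation is that if $A_{i}$ is a singleton, then $\PP(A_{i}) = \{\pmax\}$, so $\psi^{A_{i}}(\pmax) = 0$ (the trivial $|A_{i}|=1$ case noted after \eqref{eq:decay-max}), and hence \eqref{eq:decay-max} forces $\varrho^{A_{i}}(\pmax) = \rtot{}$. The $s^{\pa}_{\!\cA}$ singleton blocks therefore contribute exactly $s^{\pa}_{\!\cA}\,\rtot{}$, and transposing this to the right-hand side turns $r\,\rtot{}$ into $(r - s^{\pa}_{\!\cA})\,\rtot{}$. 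The special case $\cA = \pmin$, where all $r = |U|$ parts are singletons so that $s^{\pa}_{\!\cA} = r$ and the remaining sum is empty, then gives $\psi^{U}\!(\pmin) = 0$ at once.

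For the additivity relation under $\cC \preccurlyeq \cB$, I would unfold the definition on both sides. Writing $\cC = \{C_{1},\dots,C_{m}\}$, Eq.~\eqref{eq:decay-rec} gives $\psi^{U}\!(\cC) = \sum_{k=1}^{m}\psi^{C_{k}}(\pmax)$. Because $\cC \preccurlyeq \cB$, each part $C_{k}$ is contained in exactly one block $B_{i}$, so the parts of the restriction $\cC|^{\pa}_{\nts B_i}$ are precisely those $C_{k}$ with $C_{k}\subseteq B_{i}$; applying \eqref{eq:decay-rec} to $\psi^{B_i}(\cC|^{\pa}_{\nts B_i})$ then gives $\sum_{k:\,C_{k}\subseteq B_{i}}\psi^{C_{k}}(\pmax)$. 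Summing over $i$ and using that each $C_{k}$ lies in a unique $B_{i}$ collapses the double sum back to $\sum_{k=1}^{m}\psi^{C_{k}}(\pmax) = \psi^{U}\!(\cC)$, as required.

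None of these steps is genuinely difficult; the argument is essentially bookkeeping on the partition lattice, driven entirely by the recursion \eqref{eq:decay-rec}. The one point that deserves care — and is the mild obstacle — is the singleton evaluation $\varrho^{A_{i}}(\pmax) = \rtot{}$, which runs counter to the intuition from Remark~\ref{rem:balance} that the value at $\pmax$ is immaterial: it holds here precisely because marginalisation to a single site collapses the entire recombination mass onto the only available partition.
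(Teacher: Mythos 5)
Your proof is correct and takes essentially the same route as the paper's: the anchoring relation $\psi^{A_i}(\pmax) = \rtot{} - \varrho^{A_i}(\pmax)$ combined with the level-independence of $\rtot{}$ from Eq.~\eqref{eq:rtot}, the singleton evaluation $\varrho^{A_i}(\pmax) = \rtot{}$ for the simplified form and the $\pmin$ case, and the joining/bijection bookkeeping over blocks for the additivity under refinement. No gaps.
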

\begin{proof}
  The first claim follows from the relation $\psi^{A_{i}} \nts (\pmax)
  = \rtot{} - \varrho^{A_{i}}\nts (\pmax)$, where $\rtot{}$ does
  \emph{not} depend on $U$ as a consequence of
  Eq.~\eqref{eq:rtot}. For any part $A_{i}$ that is a singleton, one
  has $\varrho^{A_{i}}\nts (\pmax) = \rtot{}$, which implies the
  second identity. The latter, in turn, confirms that $\psi^{U}\!
  (\pmin) = 0$ for all $U\nts\subseteq S$, since $s^{\pa}_{\!\cA}=r$ in
  this case.

  For the remaining identity, fix $\cB = \{ B_{1}, \dots, B_{r} \} \in
  \PP(U)$. If $\cC = \{ C_{1}, \dots, C_{s} \} \in \PP(U)$ is a
  refinement of $\cB$, we have $s\geqslant r$ and $\cC$ must be a
  joining of the form
\[
    \cC \, = \, \cC_{1} \sqcup \dots \sqcup \cC_{r} 
\]
with $\cC_{i} = \cC|^{\pa}_{\nts B_i} \in \PP(B_{i})$ for all $1
\leqslant i\leqslant r$. Writing $\cC_{i} = \bigl\{ C^{(i)}_{1}, \dots
, C^{(i)}_{k_i} \bigr\}$, we have $s=\sum_{i=1}^{r} k_{i}$ because
$\cC\preccurlyeq \cB$ implies the existence of a bijection between the
two families $\{ C^{\pa}_{k} \mid 1 \leqslant k \leqslant s \}$ and
$\{ C^{(i)}_{j} \mid 1 \leqslant i \leqslant r \text{ and } 1
\leqslant j \leqslant k_{i} \}$.  Eq.~\eqref{eq:decay-rec} now implies
that the right-hand side of our claim is
\[
   \sum_{i=1}^{|\cB|} \psi^{B_i} (\cC_{i} ) \, =  \sum_{i=1}^{r}
   \sum_{j=1}^{k_{i}} \psi^{C^{(i)}_{\nts j}} (\pmax) \, =
   \sum_{\ell=1}^{s} \psi^{C_{\ell}} (\pmax) \, = \, 
   \psi^{\ts U} \! (\cC) \ts ,
\]
and the argument is complete.
\end{proof}

If $U\ne \varnothing$, with $|U| \leqslant 3$, one sees with
Corollary~\ref{coro:add-on} that
\begin{equation}\label{eq:psi-versus-chi}
  \psi^{U} \! (\cA) \, = \, \chi^{U} \! (\cA) \, = \!
  \sum_{\cB \notin [\cA,\pmax]}\! \varrho^{U} \! (\cB)
\end{equation}
holds for any $\cA \in \PP (U)$. For larger $U$, this relation holds
for $\cA = \pmax$ and any $\cA\in\PP (U)$ with $|\cA|=2$ if one of the
parts is a singleton set, as follows from
Theorem~\ref{thm:better-than-nothing}.

\begin{lemma}\label{lem:distinct}
  Let the decay rates\/ $\psi^{S} \nts (\cA)$ with\/ $\cA\in\PP (S)$
  be distinct and assume that the recombination rates for subsystems
  are calculated according to Eq.~\eqref{eq:def-induced-rates}. Then,
  for any non-empty\/ $U\nts\subseteq S$, the decay rates\/ $\psi^{U}\!
  (\cB)$ with\/ $\cB\in\PP (U)$ are distinct as well. In this case,
  the exponential functions\/ $\ee^{-\psi^{U}\! (\cB)\ts t}$ with\/
  $\cB\in\PP(U)$ are linearly independent over\/ $\RR$.
\end{lemma}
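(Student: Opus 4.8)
The plan is to dispatch the two assertions separately, treating the transfer of distinctness first and the linear independence as a routine consequence. The key observation is that the recursive definition of the decay rates in Eq.~\eqref{eq:decay-rec} is additive over the blocks of a partition, which lets me embed $\PP(U)$ into $\PP(S)$ in a way that shifts $\psi^{U}$ by a constant.

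For the distinctness of the subsystem rates I would argue as follows. The case $U=S$ is exactly the hypothesis, so assume $U\subsetneq S$ and set $W:=S\setminus U\ne\varnothing$. To each $\cB=\{B_{1},\dots,B_{r}\}\in\PP(U)$ associate its extension $\widehat{\cB}:=\cB\sqcup\{W\}\in\PP(S)$, whose blocks are $B_{1},\dots,B_{r}$ together with the single block $W$. Applying Eq.~\eqref{eq:decay-rec} first to $\widehat{\cB}$ and then to $\cB$ gives
\[
   \psi^{S}(\widehat{\cB}) \, = \, \sum_{i=1}^{r}\psi^{B_i}(\pmax)
   \, + \, \psi^{W}(\pmax) \, = \, \psi^{U}(\cB) \, + \, \psi^{W}(\pmax),
\]
where $\psi^{W}(\pmax)$ is a constant that does not depend on $\cB$. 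Here one must check that $\psi^{B_i}(\pmax)$ is unambiguous, i.e.\ takes the same value whether $B_{i}$ is viewed as a subsystem of $U$ or of $S$. This follows from the transitivity of restriction, $(\cD|^{\pa}_{U})|^{\pa}_{B_i}=\cD|^{\pa}_{B_i}$, which yields the rate identity $(\varrho^{S})^{B_i}=(\varrho^{U})^{B_i}$ by summing over fibres exactly as in the derivation of Eq.~\eqref{eq:marginal-gen}; hence the same marginal rates feed into the recursion for $\psi^{B_i}$, and the displayed identity is well posed.

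With this identity in hand the transfer of distinctness is immediate. The extension map $\cB\mapsto\widehat{\cB}$ is injective, since $\widehat{\cB}|^{\pa}_{U}=\cB$ recovers $\cB$; thus $\cB\ne\cB'$ in $\PP(U)$ forces $\widehat{\cB}\ne\widehat{\cB'}$ in $\PP(S)$, whence $\psi^{S}(\widehat{\cB})\ne\psi^{S}(\widehat{\cB'})$ by the distinctness hypothesis on $\PP(S)$. Cancelling the common constant $\psi^{W}(\pmax)$ gives $\psi^{U}(\cB)\ne\psi^{U}(\cB')$, which is the first claim.

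Finally, once the exponents $\psi^{U}(\cB)$ are pairwise distinct, the linear independence of the functions $\ee^{-\psi^{U}(\cB)\ts t}$ over $\RR$ is standard: a relation $\sum_{\cB}c^{}_{\cB}\,\ee^{-\psi^{U}(\cB)t}\equiv 0$, differentiated $m$ times and evaluated at $t=0$, forces $\sum_{\cB}c^{}_{\cB}\,\bigl(-\psi^{U}(\cB)\bigr)^{m}=0$ for every $m\geqslant 0$, and the associated Vandermonde system in the distinct nodes $-\psi^{U}(\cB)$ is nonsingular, so all $c^{}_{\cB}$ vanish. The only genuine content is the additive identity above; I expect the chief subtlety to be the bookkeeping that $\psi^{B_i}(\pmax)$ is well defined independently of the ambient system, which is precisely the marginalisation consistency of the rates already established.
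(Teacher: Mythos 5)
Your proposal is correct and follows essentially the same route as the paper: the paper also transfers distinctness by completing each $\cB\in\PP(U)$ with a fixed partition of the complement $U^{\mathsf{c}}$ and invoking the block-additivity of $\psi$ (its Lemma~\ref{lem:decay-sum}), which is exactly your identity $\psi^{S}(\widehat{\cB})=\psi^{U}(\cB)+\psi^{W}(\pmax)$ in the special case where the complement carries the partition $\{W\}$. Your explicit check that $\psi^{B_i}(\pmax)$ is independent of the ambient system, via transitivity of marginalisation as in Eq.~\eqref{eq:marginal-gen}, is a point the paper uses implicitly, and the Vandermonde argument supplies the ``standard'' linear-independence step the paper leaves to the reader.
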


\begin{proof}
  Assume distinctness of the rates $\psi^{S} (\cA)$ with $\cA\in\PP
  (S)$ and fix a non-empty $U\nts\subseteq S$. If $\cB, \cB' \in \PP
  (U)$ exist with $\cB\ne \cB'$ but $\psi^{U}\! (\cB) = \psi^{U} \!
  (\cB')$, we may choose $\cA, \cA' \in \PP (S)$ with $\cA|^{\pa}_{U}
  = \cB$, $\cA'|^{\pa}_{U} = \cB'$ (hence $\cA\ne \cA'$) and
  $\cA|^{\pa}_{U_{\vph}^{\mathsf{c}}} =
  \cA'|^{\pa}_{U_{\vph}^{\mathsf{c}}}$.  Since $S = U \,\dot{\cup}\,
  U_{}^{\mathsf{c}}$, we can employ Lemma~\ref{lem:decay-sum} to
  derive that
\[
   \psi^{S} (\cA) \, = \, \psi^{U}\! \bigl(\cA|^{\pa}_{U} \bigr) 
   + \psi^{U^{\mathsf{c}}}\! \bigl(\cA|^{\pa}_{U_{\vph}^{\mathsf{c}}} 
   \bigr) \, = \, \psi^{U}\! \bigl(\cA'|^{\pa}_{U} \bigr) 
   + \psi^{U^{\mathsf{c}}} \! \bigl(\cA'|^{\pa}_{U_{\vph}^{\mathsf{c}}} 
   \bigr) \, = \, \psi^{S} (\cA') \ts ,
\]
which is a contradiction to the distinctness assumption. This proves
the first claim, while the second assertion is standard.
\end{proof}

Below, we need to understand the decay rates in various ways.  Let us
thus expand on their relation with the recombination rates.  Starting
from the definition in Eq.~\eqref{eq:decay-rec}, with $\cA = \bigl\{
A^{\pa}_{1}, \ldots , A^{\pa}_{\lvert \cA \rvert} \bigr\}$, one
obtains
\begin{align}
   \nonumber
   \psi^{U}\! (\cA) \, & = \sum_{i=1}^{\lvert \cA\rvert}
   \bigl( \rtot{} - \varrho^{A_i} \nts (\pmax) \bigr)
   \, = \, \lvert\cA\rvert \, \rtot{} \, - 
   \sum_{i=1}^{\lvert \cA\rvert}\, \sum_{\cB\in\PP (U)} \!
   \mathbf{1}^{\pa}_{\{\cB | ^{\pa}_{\nts A_i}\nts = \pmax\}} \, 
   \varrho^{U}\! (\cB) \\
   & = \, \lvert\cA\rvert \, \rtot{} \, - 
    \sum_{\cB\in\PP (U)} \biggl(\, \sum_{i=1}^{\lvert \cA\rvert}\,
     \mathbf{1}^{\pa}_{\{\cB | ^{\pa}_{\nts A_i}\nts = \pmax\}}\nts \biggr)
     \varrho^{U}\! (\cB) \, = \! \sum_{\cB\in\PP (U)}
     \biggl( \lvert \cA \rvert \, -  \sum_{i=1}^{\lvert \cA\rvert}\,
     \mathbf{1}^{\pa}_{\{\cB | ^{\pa}_{\nts A_i}\nts = \pmax\}}\nts \biggr)
     \varrho^{U}\! (\cB) \ts ,
     \label{eq:psi-rates}
\end{align}
where $\bf{1}^{\pa}_{\{ \dots\}}$ denotes the characteristic function
of the condition in its subscript.

\begin{lemma}\label{lem:decay-reco-rates}
   The decay rates defined in Eq.~\eqref{eq:decay-rec} are linear
   functions of the recombination rates of the form\/
   $\psi^{U}\! (\cA) = \sum_{\cB\in\PP (U)} \kappa^{U}\! (\cA,\cB)
   \ts \varrho^{U}\! (\cB)$, with
\[
    \kappa^{U}\! (\cA,\cB) \, = \, \lvert \cA \rvert \, -  
     \sum_{i=1}^{\lvert \cA\rvert}
     \mathbf{1}^{\pa}_{\{\cB | ^{\pa}_{\nts A_i}\nts = \pmax\}} \ts .
\]   
  In particular, the coefficients\/ $\kappa^{U}\! (\cA,\cB)$ are
  non-negative integers, and one has $\kappa^{U}\! (\cA,\cB) = 0$
  if and only if\/ $\cB \succcurlyeq\cA$.
\end{lemma}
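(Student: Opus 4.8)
The plan is to read off the linear representation directly from Eq.~\eqref{eq:psi-rates}, since that display was obtained by expanding the definition \eqref{eq:decay-rec} and rewriting each $\varrho^{A_i}(\pmax)$ through the marginal rates $\varrho^{U}$. Collecting the coefficient of $\varrho^{U}(\cB)$ there yields exactly $\kappa^{U}(\cA,\cB) = \lvert\cA\rvert - \sum_{i=1}^{\lvert\cA\rvert}\mathbf{1}_{\{\cB|^{\pa}_{A_i}=\pmax\}}$, so the first assertion of the lemma requires no computation beyond pointing back to \eqref{eq:psi-rates}.

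For integrality and non-negativity, I would note that each summand $\mathbf{1}_{\{\cB|^{\pa}_{A_i}=\pmax\}}$ is the value of a characteristic function and hence lies in $\{0,1\}$. As there are precisely $\lvert\cA\rvert$ such terms, the subtracted sum is an integer in $\{0,1,\ldots,\lvert\cA\rvert\}$, and therefore $\kappa^{U}(\cA,\cB)$ is itself an integer in that same range; in particular it is a non-negative integer.

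The only step carrying any content is the vanishing criterion, which I would handle by reading the indicators combinatorially. The condition $\cB|^{\pa}_{A_i}=\pmax$ says that the restriction of $\cB$ to $A_i$ is the one-block partition $\{A_i\}$, i.e.\ that $A_i$ lies entirely inside a single block of $\cB$; equivalently, $\kappa^{U}(\cA,\cB)$ counts exactly those blocks of $\cA$ that are \emph{split} by $\cB$. Hence $\kappa^{U}(\cA,\cB)=0$ holds precisely when no block of $\cA$ is split, that is, when every $A_i$ sits inside one block of $\cB$. By the definition of the refinement order this is exactly the assertion that $\cA$ is finer than $\cB$, namely $\cA\preccurlyeq\cB$, or equivalently $\cB\succcurlyeq\cA$, which gives the claimed equivalence.

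The main (and rather minor) obstacle is a piece of bookkeeping with the restriction notation: one must keep in mind that the $\pmax$ inside the indicator is the maximal element of $\PP(A_i)$, namely the single-block partition $\{A_i\}$, and not the maximal element $\{U\}$ of $\PP(U)$. Once this is kept straight, both remaining claims are immediate.
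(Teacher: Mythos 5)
Your proposal is correct and follows essentially the same route as the paper's own proof: both read the linear representation off from Eq.~\eqref{eq:psi-rates}, note that the coefficients are differences of $\lvert\cA\rvert$ and a sum of indicators (hence non-negative integers), and characterise the vanishing of $\kappa^{U}\!(\cA,\cB)$ by the condition $\cB|^{\pa}_{A_i}=\pmax$ for every $i$, which is equivalent to $\cB\succcurlyeq\cA$. Your added remark that the $\pmax$ in the indicator is the maximal element of $\PP(A_i)$, not of $\PP(U)$, is a correct and worthwhile clarification, but the argument itself is the paper's argument.
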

\begin{proof}
  The first claim summarises our calculation in
  Eq.~\eqref{eq:psi-rates}. It is obvious from the explicit formula
  that $\kappa^{U}\! (\cA,\cB) \in \NN^{\pa}_{0}$, for all $\cA, \cB
  \in \PP (U)$. The coefficient vanishes if and only if each condition
  under the summation is true, which means $\cB | ^{\pa}_{\nts A_{i}}
  = \pmax = \{ A_{i} \}$ for all $1 \le i \le \lvert \cA \rvert$. But
  the latter condition, in turn, is equivalent with $\cB
  \succcurlyeq\cA$, which proves the assertion.
\end{proof}

\begin{remark}\label{rem:generic}
  One important consequence of Lemma~\ref{lem:decay-reco-rates} is
  that, as linear functions of the recombination rates, $\psi^{U}\!
  (\cA)$ and $\psi^{U}\! (\cB)$ are different whenever $\cA \ne \cB$,
  because the intervals $[\cA,\pmax]$ and $[\cB,\pmax]$ are then
  different, too.  By a standard Baire category argument, the
  situation that the rates $\psi^{U}\! (\cA)$ with $\cA\in\PP (U)$ are
  distinct is then generic. In fact, the exceptional set with any
  degeneracy among the decay rates is a union of true hyperplanes in
  parameter space $\{\varrho^{U}\! (\cA) \ge 0 \mid \cA\in\PP (U) \}
  \simeq \RR_{+}^{B(\lvert U\rvert)}$.  Consequently, it is both a
  nowhere dense set and a null set. The corresponding statement
  remains true if we restrict our attention to the parameter space $\{
  \varrho^{U}\!  (\cA) \ge 0 \mid \cA\in\PP_{2} (U) \} \simeq
  \RR_{+}^{2^{|U| - 1} - 1}$.  \exend
\end{remark}

Let us now return to the original ansatz \eqref{eq:gen-ansatz}.  The
coefficient function $\theta^{\ts U}$ is an element of the incidence
algebra $\AAA(\PP(U))$. Since $a^{U}_{0} (\cA) = \delta^{\ts U} \!
(\cA,\pmax)$ is our initial condition, we see that, for $\varnothing
\ne U\nts \subseteq S$ and any $\cA\in\PP(U)$, we must have
\[
     \sum_{\udo{\cB} \succcurlyeq \cA} \theta^{\ts U} \! ( \cA,\cB\ts )
     \, = \, \delta^{\ts U} \! (\cA,\pmax ) \ts ,
\]
hence $\theta^{\ts U} \! (\pmax,\pmax) = 1$ (as mentioned above) 
together with
\begin{equation}\label{eq:theta-initial}
    \theta^{\ts U} \! (\cA,\pmax) \, = \, - \! \!
    \sum_{\cA \preccurlyeq \udo{\cC}\prec \pmax}
    \! \theta^{\ts U} \! (\cA,\cC) 
\end{equation}
for $\cA \prec \pmax$.  This way, for non-empty $U\nts\subseteq S$ and
$\cA\in\PP(U)$, the coefficients $\theta^{\ts U}\! (\cA,\pmax)$ are
either fixed or recursively determined from $\theta^{\ts U}\!
(\cA,\cC)$ with $\cA\preccurlyeq\cC\prec \pmax$.

Taking the derivative of our ansatz \eqref{eq:gen-ansatz} leads to
\begin{equation}\label{eq:a-derivative}
    \dot{a}^{\ts U}_{t} \! (\cA) \, = \, - \sum_{\udo{\cB}\succcurlyeq\cA}
    \psi^{\ts U} \! (\cB\ts ) \, \theta^{\ts U} \! (\cA,\cB\ts ) \, 
    \ee^{-\psi^{\ts U} \! (\cB\ts )\ts t} ,
\end{equation}
which has to be compared with the right-hand side of
Eq.~\eqref{eq:a-marg}.  Inserting the corresponding expression for the
coefficients of the subsystem according to our ansatz yields the
expression
\[
\begin{split}
    \dot{a}^{\ts U}_{t} \! (\cA) \, & = \, - \rtot{} \, 
     a^{U}_{t} \! (\cA) \, +
    \sum_{\udo{\cB} \succcurlyeq \cA} \varrho^{\ts U} \! (\cB\ts ) 
     \, \prod_{i=1}^{|\cB|} \,
    \sum_{\substack{\cC_{i}\in\PP(B_i) \\ \cC_{i} \succcurlyeq 
       \cA|^{\pa}_{\nts B_i}}} \! \theta^{B_i} 
       (\cA|^{\pa}_{B_i},\cC_{i}) \,
    \ee^{-\psi^{B_i} (\cC_{i}) \ts t} \\
    & = \, - \rtot{} \, a^{U}_{t} \! (\cA) \, + 
    \sum_{\udo{\cB} \succcurlyeq \cA} 
    \varrho^{\ts U} \! (\cB\ts )  \! \sum_{\cC\in [\cA,\cB]}
    \exp \Bigl( - \sum_{i=1}^{|\cB|} \psi^{B_i} 
       (\cC|^{\pa}_{\nts B_i}) \ts t \Bigr)
    \prod_{j=1}^{|\cB|} \theta^{B_j} (\cA|^{\pa}_{\nts B_j} , 
       \cC|^{\pa}_{\nts B_j}) \\[2mm]
    & = \, - \rtot{} \, a^{U}_{t} \! (\cA) \, + 
    \sum_{\udo{\cB} \succcurlyeq \cA} 
    \varrho^{\ts U} \! (\cB\ts )  \! \sum_{\cC\in [\cA,\cB]}
     \!\ee^{- \psi^{\ts U} \! (\cC) \ts t}\,
    \prod_{j=1}^{|\cB|} \theta^{B_j} (\cA|^{\pa}_{\nts B_j} , 
       \cC|^{\pa}_{\nts B_j}) \ts ,
\end{split}
\]
where the second part of Lemma~\ref{lem:decay-sum} was used 
in the last step. Inserting the expression \eqref{eq:gen-ansatz}
for $a^{U}_{t} \! (\cA)$, changing
summation variables and regrouping terms gives
\begin{eqnarray}
    \dot{a}^{\ts U}_{t} \! (\cA)  & = & - 
    \sum_{\udo{\cB}\succcurlyeq\cA}
    \ee^{-\psi^{\ts U} \! (\cB\ts )\ts t} \Bigl( \rtot{} \, 
     \theta^{\ts U} \! (\cA,\cB\ts ) \, -
    \sum_{\udo{\cC}\succcurlyeq\cB} \varrho^{\ts U} \! (\cC)
    \prod_{i=1}^{|\cC|} \theta^{C_i} (\cA|^{\pa}_{C_i} , 
       \cB|^{\pa}_{C_i})\Bigr) \nonumber \\
    & = & - \sum_{\udo{\cB}\succcurlyeq\cA}
    \ee^{-\psi^{\ts U} \! (\cB\ts )\ts t} \Bigl( \psi^{\ts U}\! (\pmax) \, 
       \theta^{\ts U} \! (\cA,\cB\ts ) \, -
    \! \sum_{\cB\preccurlyeq\udo{\cC}\prec \pmax}
       \! \varrho^{\ts U} \! (\cC)
    \prod_{i=1}^{|\cC|} \theta^{C_i} 
     (\cA|^{\pa}_{C_i} , \cB|^{\pa}_{C_i})\Bigr) ,
    \label{eq:rec-reduce}
\end{eqnarray}
where the second step effectively removes all terms with
$\varrho^{\ts U}\! (\pmax)$; compare Remark~\ref{rem:balance}.

Let $U\nts\subseteq S$ be non-empty and let us assume that the
exponential functions on the right-hand side of
Eq.~\eqref{eq:gen-ansatz} are linearly independent. By
Lemma~\ref{lem:distinct}, this is certainly the case when the rates
$\psi^{S} (\cA)$ with $\cA \in\PP(S)$ are distinct; compare
Remark~\ref{rem:generic}. Now, a comparison of coefficients in
Eq.~\eqref{eq:rec-reduce} with those of the right-hand side of
Eq.~\eqref{eq:a-derivative} yields the relations
\[
   \theta^{\ts U} \! (\cA,\cB\ts ) \bigl( \psi^{\ts U}\! (\pmax) - 
    \psi^{\ts U} \! (\cB\ts )\bigr)
   \, = \! \sum_{\cB\preccurlyeq\udo{\cC}\prec \pmax} \! 
        \varrho^{\ts U} \! (\cC) \prod_{i=1}^{|\cC|} 
       \theta^{C_i} (\cA|^{\pa}_{C_i} , \cB|^{\pa}_{C_i}) \ts ,
\]
for all $\cA,\cB\in\PP(U)$ with $\cA\preccurlyeq\cB$. Note that our
assumption entails the condition that $ \psi^{\ts U}\! (\pmax) \ne
\psi^{\ts U}\! (\cB)$ for all $\pmax \ne \cB\in\PP(U)$, so that we
get
\begin{equation}\label{eq:theta-rec}
   \theta^{\ts U} \! (\cA,\cB\ts ) \, = \! 
   \sum_{\cB\preccurlyeq\udo{\cC}\prec\pmax}
   \frac{\varrho^{\ts U} \! (\cC)}{\psi^{\ts U}\! 
    (\pmax) - \psi^{\ts U} \! (\cB\ts )}
   \,\prod_{i=1}^{|\cC|} \theta^{C_i} (\cA|^{\pa}_{C_i} , \cB|^{\pa}_{C_i})
\end{equation}
for all $\cA\preccurlyeq \cB \prec \pmax$, where the
$\theta$-coefficients for the subsystems are themselves determined
recursively, under the distinctness condition for the subsystem decay
rates, which follows from our assumption by
Lemma~\ref{lem:distinct}. Since we know $\theta^{U}\!$ for any $U$
with $1\leqslant |U| \leqslant 3$ from Corollary~\ref{coro:add-on} and
Remark~\ref{rem:lin-decay-inverse}, see also
Corollary~\ref{coro:simple-cases} below, the recursion
\eqref{eq:theta-rec} uniquely determines all $\theta$-coefficients.

We have thus shown the following result for the generic situation
of Remark~\ref{rem:generic}.
\begin{theorem}\label{thm:rec-sol}
  Let\/ $S\ne \varnothing$ be a finite set, and assume that the decay
  rates\/ $\psi^{S} (\cA)$ with\/ $\cA\in\PP(S)$ are distinct.  Let
  the decay rates for non-empty subsets\/ $U\nts\subseteq S$ be
  defined via Eqs.~\eqref{eq:decay-max} and \eqref{eq:decay-rec},
  where the marginal recombination rates on subsystems are given by
  Eq.~\eqref{eq:def-induced-rates}.

  Then, our exponential ansatz \eqref{eq:gen-ansatz} solves the Cauchy
  problem of Eq.~\eqref{eq:gen-reco-coeff-DGL} with initial
  condition\/ $a^{S}_{0} (\cA) = \delta^{S} (\cA,\pmax)$ if and only
  if the coefficients\/ $\theta^{S}\nts $ are recursively determined
  by Eq.~\eqref{eq:theta-rec} together with\/ $\theta^{U}\!
  (\pmax,\pmax) =1$ and Eq.~\eqref{eq:theta-initial}.  \qed
\end{theorem}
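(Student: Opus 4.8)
The plan is to assemble the computation already carried out in Eqs.~\eqref{eq:gen-ansatz}--\eqref{eq:theta-rec}, since most of the work precedes the statement; what remains is to organise it into a genuine equivalence and to check that the recursion is well-founded. I would treat the initial condition and the ODE separately. Evaluating the ansatz \eqref{eq:gen-ansatz} at $t=0$ gives $\sum_{\cB\succcurlyeq\cA}\theta^{U}(\cA,\cB)$, which must equal $\delta^{U}(\cA,\pmax)$; this forces $\theta^{U}(\pmax,\pmax)=1$ and, for $\cA\prec\pmax$, precisely the relation \eqref{eq:theta-initial}. Thus the initial-condition constraint is equivalent to these two requirements alone, independently of the ODE.

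For the differential equation, I would differentiate the ansatz to obtain \eqref{eq:a-derivative} and, on the other side, substitute the ansatz into the subsystem ODE \eqref{eq:a-marg}, using the summation property $\psi^{U}(\cC)=\sum_{i}\psi^{B_i}(\cC|_{B_i})$ from Lemma~\ref{lem:decay-sum} to collect exponentials into the form \eqref{eq:rec-reduce}. The decisive step is the comparison of coefficients: by Lemma~\ref{lem:distinct}, distinctness of the $\psi^{S}(\cA)$ propagates to every subsystem, so the functions $\ee^{-\psi^{U}(\cB)\ts t}$ are linearly independent over $\RR$. Hence the two expressions for $\dot a^{U}_{t}(\cA)$ agree for all $t$ if and only if their coefficients agree for each $\cB\succcurlyeq\cA$, giving $\theta^{U}(\cA,\cB)\bigl(\psi^{U}(\pmax)-\psi^{U}(\cB)\bigr)=\sum_{\cB\preccurlyeq\cC\prec\pmax}\varrho^{U}(\cC)\prod_{i}\theta^{C_i}(\cA|_{C_i},\cB|_{C_i})$. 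Since distinctness guarantees $\psi^{U}(\pmax)\ne\psi^{U}(\cB)$ for $\cB\ne\pmax$, this is equivalent to \eqref{eq:theta-rec} for $\cB\prec\pmax$, while $\cB=\pmax$ is handled by \eqref{eq:theta-initial}; because linear independence makes the coefficient comparison reversible, this establishes both directions of the asserted equivalence at once.

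The final point I would verify is that these relations determine $\theta^{U}$ uniquely and that the recursion terminates. Here the key observation is that every $\cC$ on the right of \eqref{eq:theta-rec} satisfies $\cC\prec\pmax$, so $\cC$ has at least two parts and each $C_i$ is a \emph{proper} subset of $U$; thus \eqref{eq:theta-rec} expresses $\theta^{U}$ through strictly smaller subsystems, and the recursion bottoms out at $1\leqslant|U|\leqslant3$, which is known explicitly from Corollary~\ref{coro:add-on} and Remark~\ref{rem:lin-decay-inverse}. The only real subtlety, rather than a serious obstacle, is the bookkeeping: within a fixed $U$ one must compute $\theta^{U}(\cA,\cB)$ for $\cB\prec\pmax$ (needing smaller subsystems) before $\theta^{U}(\cA,\pmax)$ (needing same-system values through \eqref{eq:theta-initial}). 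With Lemma~\ref{lem:distinct} securing linear independence and nonvanishing denominators, this ordering makes the recursion well-defined, and the resulting $\theta^{S}$ are the unique coefficients for which the ansatz solves the Cauchy problem, consistent with the uniqueness already granted by Proposition~\ref{prop:reduction-props}.
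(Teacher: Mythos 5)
Your proposal is correct and follows essentially the same route as the paper: the paper's ``proof'' is exactly the derivation preceding the theorem statement --- the ansatz \eqref{eq:gen-ansatz}, its evaluation at $t=0$ giving $\theta^{U}\!(\pmax,\pmax)=1$ and \eqref{eq:theta-initial}, differentiation compared against \eqref{eq:a-marg} via Lemma~\ref{lem:decay-sum} to reach \eqref{eq:rec-reduce}, and coefficient comparison justified by the linear independence from Lemma~\ref{lem:distinct}, with the recursion grounded in the small cases of Corollary~\ref{coro:add-on}. Your only additions --- spelling out that the $\cB=\pmax$ coefficient relation is vacuous and hence handled by the initial condition, and that each $\cC\prec\pmax$ in \eqref{eq:theta-rec} forces proper subsystems so the recursion is well-founded --- are points the paper leaves implicit, not a different argument.
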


Let us mention in passing that a similar result can be derived for
other initial conditions as well. We concentrate on this one as it
fits Theorem~\ref{thm:equivalence} and thus also provides a solution
of the original recombination equation.

\begin{remark}
  The recursion in Eq.~\eqref{eq:theta-rec}, and the solution thus
  constructed for the recombination dynamics, is similar in structure
  to the corresponding relations (6.5.1) and (6.5.2) in Lyubich's book
  \cite{Lyu} for the two-parent case in discrete time. However, in
  contrast to our $\theta$'s, Lyubich's coefficients depend on
  $\omega^{}_{0}$.  As noted in \cite{Lyu}, this dependence on the
  initial condition poses a serious difficulty to the solution, since
  it makes an explicit iteration impossible.  Put differently, our
  approach achieves the complete separation of the recombination
  structure from the types, as already inherent in
  Theorem~\ref{thm:rec-sol} and further discussed in
  Section~\ref{sec:partitioning}; this entails a crucial
  simplification.  \exend
\end{remark}

In the situation of Theorem~\ref{thm:rec-sol}, we thus know that our
original ansatz \eqref{eq:gen-ansatz} leads to a solution.  Since we
then also know that $\psi^{S} (\cA) = 0$ holds only for $\cA = \pmin$,
and $\psi^{S} \nts (\cA) > 0$ otherwise, we have the following
immediate consequence for the asymptotic behaviour of the solution.

\begin{coro}
  Under the assumptions of Theorem~$\ref{thm:rec-sol}$, one has
\[
    \lim_{t\to\infty} a^{S}_{t} (\cA) \, = \, 
    \delta^{S} (\pmin,\cA) \ts ,
\]
which means that the corresponding solution\/ $\omega^{\pa}_{t}$ of
the ODE \eqref{eq:reco-eq}, with initial condition\/
$\omega^{\pa}_{0}$, $\|.\|$-converges to the equilibrium\/
$R^{\pa}_{\pmin} (\omega^{\pa}_{0})$ as\/ $t\to\infty$.  Moreover, the
convergence is exponentially fast, with the rate given by\/ $\min \{
\psi^{S} \nts (\cA) \mid \cA \ne \pmin \}$, while the individual rate
for the convergence of\/ $a^{S}_{t} (\cA)$ is given by\/ $\min \{
\psi^{S} \nts (\cB) \mid \pmin \ne \cB \succcurlyeq \cA \}$.  \qed
\end{coro}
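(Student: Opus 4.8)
The plan is to extract all four assertions directly from the explicit ansatz \eqref{eq:gen-ansatz} for $U=S$, whose coefficients $\theta^{S}$ are now completely determined by Theorem~\ref{thm:rec-sol}. The first step is to record the sign pattern of the decay rates. By Lemma~\ref{lem:decay-sum} one has $\psi^{S}(\pmin)=0$, while the recursive definition \eqref{eq:decay-rec} together with $\psi^{A_i}(\pmax)=\rtot{}-\varrho^{A_i}(\pmax)\geqslant 0$ (using Eq.~\eqref{eq:rtot}) shows that $\psi^{S}(\cA)\geqslant 0$ for every $\cA\in\PP(S)$. Since the decay rates are assumed distinct, this forces $\psi^{S}(\cA)>0$ for all $\cA\ne\pmin$, which is the key fact driving the whole statement.

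Next I would let $t\to\infty$ in \eqref{eq:gen-ansatz}. Every exponential with $\psi^{S}(\cB)>0$ decays to $0$, so the only surviving contribution comes from the term $\cB=\pmin$; but $\pmin$ occurs in the sum $\sum_{\udo{\cB}\succcurlyeq\cA}$ exactly when $\pmin\succcurlyeq\cA$, i.e.\ only for $\cA=\pmin$. Hence $a^{S}_{t}(\cA)\to 0$ for $\cA\ne\pmin$ and $a^{S}_{t}(\pmin)\to\theta^{S}(\pmin,\pmin)$. To identify the last constant with $1$, I would invoke the fact that each $a^{S}_{t}$ is a probability vector on $\PP(S)$ by Proposition~\ref{prop:reduction-props}; the limit is then again a probability vector, so the single surviving coefficient must equal $1$. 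Alternatively, Theorem~\ref{thm:a-part} identifies $a^{S}_{t}$ with the distribution of the partitioning process, which converges almost surely to its unique absorbing state $\pmin$, every other state having strictly positive total out-rate $\psi^{S}(\cA)$. This establishes $\lim_{t\to\infty}a^{S}_{t}(\cA)=\delta^{S}(\pmin,\cA)$.

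The norm convergence of $\omega^{\pa}_{t}$ then follows mechanically. By Theorem~\ref{thm:equivalence}, $\omega^{\pa}_{t}-R^{\pa}_{\pmin}(\omega^{\pa}_{0})=\sum_{\cB}\bigl(a^{S}_{t}(\cB)-\delta^{S}(\pmin,\cB)\bigr)R^{\pa}_{\cB}(\omega^{\pa}_{0})$, and the triangle inequality together with $\|R^{\pa}_{\cB}(\omega^{\pa}_{0})\|\leqslant\|\omega^{\pa}_{0}\|$ from Proposition~\ref{prop:gen-props}(3) bounds the norm by $\|\omega^{\pa}_{0}\|\sum_{\cB}|a^{S}_{t}(\cB)-\delta^{S}(\pmin,\cB)|$, which tends to $0$ by the previous step.

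For the rates, I would subtract the limit and write $a^{S}_{t}(\cA)-\delta^{S}(\pmin,\cA)=\sum_{\pmin\ne\udo{\cB}\succcurlyeq\cA}\theta^{S}(\cA,\cB)\,\ee^{-\psi^{S}(\cB)t}$, valid in both cases $\cA=\pmin$ and $\cA\ne\pmin$ once $\theta^{S}(\pmin,\pmin)=1$ is used. Because the positive decay rates are distinct, the sum has a unique slowest mode, with exponent $\min\{\psi^{S}(\cB)\mid\pmin\ne\cB\succcurlyeq\cA\}$, which gives the claimed individual rate; maximising over $\cA$ (every $\cB\ne\pmin$ arising already for $\cA=\cB$) yields the global rate $\min\{\psi^{S}(\cA)\mid\cA\ne\pmin\}$ for $\omega^{\pa}_{t}$ via the estimate of the previous paragraph. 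The only genuinely delicate point is that the \emph{individual} rate equals this value exactly when the coefficient $\theta^{S}(\cA,\cB)$ of the slowest surviving mode is non-zero; this holds generically, and in general the stated value is an upper bound for the decay exponent. Verifying non-vanishing of that leading coefficient (or simply reading the assertion as giving the slowest possible rate) is where the small amount of extra care is warranted.
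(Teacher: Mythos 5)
Your proposal is correct and follows essentially the same route as the paper, which states the corollary as an immediate consequence of the exponential ansatz \eqref{eq:gen-ansatz} once one knows that $\psi^{S}(\cA)=0$ holds only for $\cA=\pmin$ and $\psi^{S}(\cA)>0$ otherwise. Your closing caveat --- that the stated \emph{individual} rate is exact only when the $\theta$-coefficient of the slowest surviving mode is non-zero, and is in general an upper bound for the decay exponent --- is a fair point of care that the paper itself passes over silently.
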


The recursive nature of our solution does not immediately help 
to understand its structure and meaning. Let us thus study some 
aspects of it in more detail.

\section{Generic solution:\ Structure and further properties}
\label{sec:properties}

To begin, let us observe that Theorem~\ref{thm:rec-sol} has an
interesting consequence which contrasts the result of
Lemma~\ref{lem:decay-reco-rates} and shows a surprising structural
similarity with the corresponding relation of
Remark~\ref{rem:lin-decay-inverse}.
\begin{coro}\label{coro:nonlin-reco-decay}
  Let\/ $S$ be as in Theorem~$\ref{thm:rec-sol}$ and assume that the
  decay rates\/ $\psi^{S} (\cA)$ with\/ $\cA\in\PP (S)$ are distinct.
  Then, the recombination and the decay rates are related by
\[
      \varrho^{S} (\cA) \, = \, \rtot{} \, \delta^{S} (\cA,\pmax) \, -
      \sum_{\udo{\cB} \succcurlyeq \cA} \theta^{S} (\cA,\cB) \,
      \psi^{S} (\cB) ,
\]     
     and the corresponding relation holds for any subsystem
     that is defined by a non-empty\/ $U\nts \subseteq S$.
\end{coro}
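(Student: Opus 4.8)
The plan is to use that the exponential ansatz \eqref{eq:gen-ansatz} is, under the distinctness hypothesis, a genuine solution of the reduced ODE \eqref{eq:gen-reco-coeff-DGL}, and to extract the relation between recombination and decay rates by evaluating both descriptions of $\dot a^{S}_{t}(\cA)$ at the single instant $t=0$. There, the initial condition $a^{S}_{0}(\cC)=\delta^{S}(\cC,\pmax)$ is maximally simple, so all the $t$-dependence disappears in a controlled way. Differentiating the ansatz directly and setting $t=0$ gives
\[
   \dot{a}^{S}_{0}(\cA) \, = \, - \sum_{\udo{\cB}\succcurlyeq\cA}
   \psi^{S}(\cB\ts)\,\theta^{S}(\cA,\cB\ts)\ts ,
\]
while the right-hand side of \eqref{eq:gen-reco-coeff-DGL} at $t=0$ reads
\[
   \dot{a}^{S}_{0}(\cA) \, = \, - \rtot{}\,\delta^{S}(\cA,\pmax)
   \, + \sum_{\udo{\cB}\succcurlyeq\cA} \gamma(a^{S}_{0};\cA,\cB\ts)\,
   \varrho^{S}(\cB\ts) \ts .
\]

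The one step that needs a short separate argument is the evaluation of $\gamma$ at the vector $a^{S}_{0}=\delta^{S}(\cdot,\pmax)$ concentrated at $\pmax$. From the definition \eqref{eq:def-gamma}, with $\|a^{S}_{0}\|_{1}=1$, each inner sum over $\cC$ collapses to the indicator of $\cA|^{\pa}_{B_i}=\{B_i\}$, so that $\gamma(a^{S}_{0};\cA,\cB\ts)=\prod_{i=1}^{|\cB|}\mathbf{1}_{\{\cA|^{\pa}_{B_i}=\{B_i\}\}}$. This product equals $1$ exactly when every block of $\cB$ sits inside a single block of $\cA$, i.e.\ when $\cB\preccurlyeq\cA$; together with the constraint $\cA\preccurlyeq\cB$ already present in the sum, this forces $\cA=\cB$. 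Hence $\gamma(a^{S}_{0};\cA,\cB\ts)=\delta_{\cA,\cB}$ on the relevant range, and the gain term simply collapses to $\varrho^{S}(\cA)$.

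Equating the two expressions for $\dot a^{S}_{0}(\cA)$ and solving for $\varrho^{S}(\cA)$ then yields
\[
   \varrho^{S}(\cA) \, = \, \rtot{}\,\delta^{S}(\cA,\pmax)
   \, - \sum_{\udo{\cB}\succcurlyeq\cA} \theta^{S}(\cA,\cB\ts)\,
   \psi^{S}(\cB\ts) \ts ,
\]
which is precisely the claim; the case $\cA=\pmax$ serves as a reassuring consistency check, since $\theta^{S}(\pmax,\pmax)=1$ together with $\psi^{S}(\pmax)=\rtot{}-\varrho^{S}(\pmax)$ from \eqref{eq:decay-max} reproduce an identity. I do not anticipate any real obstacle: the only delicate point is the collapse $\gamma(a^{S}_{0};\cA,\cB\ts)=\delta_{\cA,\cB}$, which I would record as a one-line observation. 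Finally, the subsystem statement needs no new work, because the ansatz, the reduced ODE and the initial condition all take the identical form for a non-empty $U\nts\subseteq S$ with marginal rates from \eqref{eq:def-induced-rates}, and the subsystem decay rates remain distinct by Lemma~\ref{lem:distinct}; hence the same computation delivers the corresponding relation on every subsystem.
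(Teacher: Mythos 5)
Your proposal is correct and takes essentially the same route as the paper: both equate the right-hand side of Eq.~\eqref{eq:gen-reco-coeff-DGL} with the derivative \eqref{eq:a-derivative} of the ansatz at $t=0$, use the collapse $\gamma(a^{S}_{0};\cA,\cB\ts)=\delta^{S}(\cA,\cB)$, and settle the subsystem claim via the marginal rates together with Lemma~\ref{lem:distinct}. The only difference is cosmetic: you spell out the one-line verification of the $\gamma$-collapse, which the paper merely asserts as a consequence of Eq.~\eqref{eq:def-gamma}.
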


\begin{proof}
  By Theorem~\ref{thm:rec-sol} and
  Proposition~\ref{prop:reduction-props}, we know that the ansatz of
  Eq.~\eqref{eq:gen-ansatz}, for $U=S$, leads to the unique solution
  of the ODE \eqref{eq:gen-reco-coeff-DGL} with initial condition
  $a^{S}_{0} (\cA) = \delta^{S} (\cA,\pmax) $. Thus, we may equate the
  right-hand side of Eq.~\eqref{eq:gen-reco-coeff-DGL} with that of
  Eq.~\eqref{eq:a-derivative}, again for $U=S$, and consider the
  resulting identity at $t=0$. Observing that
\[
    \gamma (a^{S}_{0} ; \cA, \cB) \, = \, \delta^{S} (\cA,\cB)
\]
holds for our probability vector $a^{S}_{0}$ as a result of
Eq.~\eqref{eq:def-gamma}, the first claim is clear.

The second assertion follows by the corresponding calculation with
$a^{U}_{t}\! $, which is justified by
Proposition~\ref{prop:marg-consistent} together with
Lemma~\ref{lem:distinct}.
\end{proof}

Note that the relation of Corollary~\ref{coro:nonlin-reco-decay} is a
nonlinear one, because the $\theta$-coefficients themselves generally
depend on the recombination rates. To make any further progress, we
need to better understand these coefficients, beyond $\theta^{U}\!
(\pmax,\pmax) = 1$ and the relations in Eq.~\eqref{eq:theta-initial}.

\begin{prop}\label{prop:invertible}
  Under the assumptions of Theorem~$\ref{thm:rec-sol}$, one finds the
  following properties.
\begin{enumerate}\itemsep=3pt
\item $\theta^{U} \! (\pmin,\pmin) = 1\vph$ for all non-empty\/
   $U\nts\subseteq S$;
\item $\sum_{\udo{\cA\ts}\nts \preccurlyeq\cB}\, \theta^{U} \! (\cA,\cB)
   = \delta^{U}\! (\pmin,\cB)$ holds for all\/ $\cB\in \PP (U)$;
 \item Assume further that\/ $\varrho^{S}$ is strictly positive on all
   partitions of\/ $S$ with two parts. Then, $\theta^{\ts U}\!
   (\cA,\cA) \ne 0$ for all\/ $\cA\in\PP (U)$ and all non-empty\/
   $U\nts\subseteq S$; In particular, $\theta^{\ts U}\!$ is then an
   invertible element of the incidence algebra for\/ $\PP (U)\nts$
   over\/ $\RR$.
\end{enumerate}
\end{prop}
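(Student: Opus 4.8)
My plan is to settle (1) and (2) together by a normalisation argument, to reduce the invertibility assertion in (3) to the nonvanishing of the diagonal, and to prove that nonvanishing by induction on $|U|$. Under the hypotheses of Theorem~\ref{thm:rec-sol}, the family $a^{U}_{t}$ is a probability vector for each $t\geqslant 0$, either by Proposition~\ref{prop:reduction-props} applied to the subsystem $U$ or, more vividly, through its identification with the distribution of the partitioning process in Theorem~\ref{thm:a-part}. Hence $\sum_{\cA\in\PP(U)}a^{U}_{t}(\cA)=1$, and inserting the ansatz \eqref{eq:gen-ansatz} and interchanging the two sums gives
\[
   1 \; = \sum_{\cB\in\PP(U)}\Bigl(\,\sum_{\udo{\cA\ts}\nts\preccurlyeq\cB}
   \theta^{U}(\cA,\cB\ts)\Bigr)\,\ee^{-\psi^{U}(\cB\ts)t}.
\]
Now $\psi^{U}(\pmin)=0$ by Lemma~\ref{lem:decay-sum}, while $\psi^{U}(\cB\ts)\neq 0$ for $\cB\neq\pmin$ since the decay rates are distinct, so the constant $1$ is exactly the exponential $\ee^{-\psi^{U}(\pmin)t}$. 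Linear independence of the exponentials (Lemma~\ref{lem:distinct}) then forces $\sum_{\cA\preccurlyeq\cB}\theta^{U}(\cA,\cB\ts)=\delta^{U}(\pmin,\cB\ts)$, which is (2); the special case $\cB=\pmin$, for which the only admissible $\cA$ is $\pmin$ itself, yields $\theta^{U}(\pmin,\pmin)=1$, which is (1). Note that neither claim uses the extra two-part hypothesis.

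For the ``in particular'' in (3) I would invoke the standard fact that an element of the incidence algebra $\AAA(\PP(U))$ over the field $\RR$ is invertible precisely when all of its diagonal values are nonzero (see \cite{Aigner,SD}); it therefore suffices to prove $\theta^{U}(\cA,\cA)\neq 0$ for every $\cA\in\PP(U)$ and every non-empty $U$. I would argue by induction on $|U|$. For $|U|\leqslant 3$ one has $\theta^{U}(\cA,\cB\ts)=\mu^{U}(\cA,\cB\ts)$ by Corollary~\ref{coro:add-on} and Remark~\ref{rem:lin-decay-inverse}, so $\theta^{U}(\cA,\cA)=1$. When $\cA$ is a two-part partition, the recursion \eqref{eq:theta-rec} at $\cB=\cA$ collapses---only $\cC=\cA$ occurs, and both subsystem factors are $\theta^{A_j}(\pmax,\pmax)=1$---to $\theta^{U}(\cA,\cA)=\varrho^{U}(\cA)/(\psi^{U}(\pmax)-\psi^{U}(\cA))$, whose numerator is strictly positive by the two-part hypothesis and whose denominator is nonzero by distinctness.

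For arbitrary $\cA$, the recursion \eqref{eq:theta-rec} at $\cB=\cA$ expresses $\theta^{U}(\cA,\cA)$ through the subsystem diagonal values $\theta^{C_i}(\cA|^{\pa}_{C_i},\cA|^{\pa}_{C_i})$, which are nonzero by the inductive hypothesis, since each part $C_{i}$ of a $\cC$ with $\cA\preccurlyeq\cC\prec\pmax$ is a proper subset of $U$. The main obstacle is that these subsystem values are \emph{not} sign-definite---already for $|U|=4$ a two-part value is negative whenever $\psi^{U}(\pmax)<\psi^{U}(\cA)$---so a crude sign estimate on the right-hand side fails and genuine cancellation must be excluded. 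I would not fight the signs but instead exhibit a telescoping: using the additivity $\psi^{U}(\cC)=\sum_i\psi^{C_i}(\pmax)$ and $\psi^{U}(\cA)=\sum_i\psi^{C_i}(\cA|^{\pa}_{C_i})$ from Lemma~\ref{lem:decay-sum}, together with the cross-level identities for the marginal rates (Eqs.~\eqref{eq:def-induced-rates} and \eqref{eq:marginal-restricted}), the ``extra'' coarsening terms in the numerator pair off against the denominator $\psi^{U}(\pmax)-\psi^{U}(\cA)$. The cleanest organisation is first to show that deleting a singleton block of $\cA$ leaves $\theta^{U}(\cA,\cA)$ unchanged---this is exactly the cancellation that reduces it to the value on $U$ with the singleton removed---and then to treat the singleton-free case by the same bookkeeping. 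The target of the induction is to write $\theta^{U}(\cA,\cA)=N^{U}(\cA)/D^{U}(\cA)$, with $D^{U}(\cA)$ a product of nonzero differences of distinct decay rates and $N^{U}(\cA)$ a strictly positive expression in the two-part marginal rates, whence nonvanishing is immediate. Verifying that the denominators match the accumulated surplus of coarsening rates is the technical heart of the argument and the step I expect to be hardest.
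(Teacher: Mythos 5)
Your treatment of claims (1) and (2) is correct, and is essentially the paper's own argument run in a cleaner order: normalisation of $a^{U}_{t}$ (via Proposition~\ref{prop:reduction-props} on the subsystem, or Theorem~\ref{thm:a-part}), insertion of the ansatz \eqref{eq:gen-ansatz}, and linear independence of the exponentials give (2), and the case $\cB=\pmin$ of (2) yields (1). The paper proves (1) separately, by induction on $\lvert U\rvert$ via the recursion \eqref{eq:theta-rec} and the identity $\psi^{U}(\pmax)=\sum_{\cC\ne\pmax}\varrho^{U}(\cC)$, so your derivation of (1) as a corollary of (2) is a legitimate simplification. Likewise, reducing the invertibility assertion in (3) to the nonvanishing of all diagonal values is exactly the paper's step, and your two-part computation $\theta^{U}(\cA,\cA)=\varrho^{U}(\cA)/\bigl(\psi^{U}(\pmax)-\psi^{U}(\cA)\bigr)$ matches the paper's (note only that positivity of $\varrho^{U}$ on two-part partitions of $U$ needs the marginalisation \eqref{eq:def-induced-rates} applied to a two-part extension of $\cA$ to $S$).

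The genuine gap is the nonvanishing of $\theta^{U}(\cA,\cA)$ for partitions with three or more parts. There you give only a plan: delete singleton blocks, then ``telescope'' the recursion \eqref{eq:theta-rec} into a quotient $N^{U}(\cA)/D^{U}(\cA)$ with positive numerator and nonzero denominator --- and you explicitly defer the verification of the key identity. Since the subsystem diagonal values $\theta^{C_i}(\cA|_{C_i},\cA|_{C_i})$ are not sign-definite (as you correctly observe), excluding cancellation \emph{is} the entire content of claim (3), and it is precisely the step left unproven; as written, this is a proof strategy whose central combinatorial identity is conjectural. The paper avoids this bookkeeping altogether by a dimension argument: under the hypothesis that $\varrho^{S}$ is positive on all two-part partitions, these partitions generate $\PP(S)$, so the solution functions $a^{S}_{t}(\cA)$, $\cA\in\PP(S)$, span a simplex of dimension $B(n)-1$, i.e.\ the normalisation is the \emph{only} linear relation among them. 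If one had $\theta^{S}(\cA,\cA)=0$ for some $\cA\ne\pmin$, chosen \emph{coarsest} with this property, the triangular ansatz \eqref{eq:gen-ansatz} would express $a^{S}_{t}(\cA)$ through exponentials $\ee^{-\psi^{S}(\cB)\ts t}$ with $\cB\succ\cA$ only; since all diagonal entries above $\cA$ are nonzero, these exponentials are in turn linear combinations of the $a^{S}_{t}(\cC)$ with $\cC\succcurlyeq\cB$, producing an additional linear relation among the solution functions --- a contradiction. Repeating this on every non-empty $U\subseteq S$ finishes the proof. If you wish to complete your route instead, you must actually establish your product formula for $\theta^{U}(\cA,\cA)$; the paper's argument shows that no such explicit formula is needed.
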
 

\begin{proof}
For the first claim, observe that $\pmin=\pmax$ for any $U$ with
$|U|=1$, so $\theta^{U} \! (\pmin,\pmin) = \theta^{U} \! (\pmax,\pmax)
= 1$ in this case. Assume now that the claim is true for all $U$ with
$\lvert U \rvert \leqslant r$, and consider a larger set, $U = \{
u^{\pa}_{1}, \dots , u^{\pa}_{r+1} \}$ say. With $\psi^{U} \!
(\pmin)=0$, we then get from Eq.~\eqref{eq:theta-rec} that
\[
  \theta^{U} \! (\pmin,\pmin) \, = \, \frac{1}{\psi^{U}\! (\pmax)}
  \sum_{\cC\ne\pmax}\varrho^{U} \! (\cC) \prod_{i=1}^{\lvert \cC \rvert}
  \theta^{C_{i}} (\pmin,\pmin) \, =  \frac{1}{\psi^{U}\! (\pmax)}
  \sum_{\cC\ne\pmax}\varrho^{U}\! (\cC) \, = \, 1 \ts ,   
\]
where the second step uses the induction hypothesis (note that we
always have $\lvert C_{i}\rvert \leqslant r$), while the last step
employs Eq.~\eqref{eq:psi-versus-chi} which applies here.

Next, the normalisation property of the $a^{U}_{t} \! (\cA)$ implies
\[
   1 \, = \! \sum_{\cA\in\PP (U)}\! a^{U}_{t} \! (\cA) \, = \!
   \sum_{\cA\in\PP (U)} \, \sum_{\udo{\cB}\succcurlyeq\cA}
   \theta^{U} \! (\cA,\cB) \, \ee^{-\psi^{U} \! (\cB) \ts t} \, =
   \! \sum_{\cB\in\PP (U)} \! \ee^{-\psi^{U} \! (\cB) \ts t}
   \sum_{\udo{\cA\ts}\nts\preccurlyeq\cB} \theta^{U} \! (\cA,\cB) \ts .
\]
Since $\psi^{U} \! (\pmin) = 0$ and since the decay rates $\psi^{U} \!
(\cB)$ with $\cB\in\PP (U)$ are distinct by our assumption together
with Lemma~\ref{lem:distinct}, the functions $\ee^{-\psi^{U}\!
  (\cB)\ts t}$ are linearly independent. Consequently, the last
identity is equivalent to the second claim.

For the third claim, recall that the dimension of the ODE system
\eqref{eq:gen-reco-coeff-DGL} is $B(n)$ if the set of partitions
$\cA\in\PP(S)$ with $\varrho^{S} (\cA) > 0$ generates the entire
lattice $\PP (S)$, which is the case under our assumptions. The convex
set that is spanned by the solution functions $a^{S}_{t}$ is then a
simplex of dimension $B(n) - 1$, and the normalisation condition
$\sum_{\cA\in\PP (S)}a^{S}_{t} (\cA) = 1$ is the only linear relation
between these functions.

We already know that $\theta^{S} (\pmax,\pmax) = 1$, and one easily
finds
\[
      \theta^{S} (\cA,\cA) \, = \, 
      \frac{\varrho^{S} (\cA)}{\psi^{S} (\pmax) - \psi^{S} (\cA)}
\]
for any $\cA\in\PP (S)$ with $\lvert \cA \rvert = 2$. One may now
proceed inductively in the number of parts. If there were some
$\cA\ne\pmin$ with $\theta^{S} (\cA,\cA) = 0$, where we may assume
$\cA$ to be the coarsest partition with this property, we would get
from Eq.~\eqref{eq:gen-ansatz} an additional linear relations among
the solution functions $a^{S}_{t}$, which is impossible. Since
$\theta^{S} (\pmin,\pmin) = 1$ by the second assertion, the claim is
true on the top level (defined by $S$).  Repeating the argument
for any non-empty $U\nts \subset S$ completes the argument.

Finally, the invertibility of $\theta^{U}\!$ as an element of the
incidence algebra is a standard consequence of
claim (3); compare \cite{Aigner}.
\end{proof}

Consequently, under the assumptions of part (3) of
Proposition~\ref{prop:invertible}, $\theta^{U}\!$ has a unique (left
and right) inverse, $\eta^{\ts U}$ say. This means that
\[
     \sum_{\cB \in [\cA,\cC]}\! \theta^{U} \! 
     (\cA, \cB) \, \eta^{U} \! (\cB,\cC)
     \, = \, \delta^{\ts U} \! (\cA,\cC) \, = \! 
     \sum_{\cB\in [\cA,\cC]}\!
     \eta^{U} \! (\cA,\cB) \, \theta^{U} \! (\cB,\cC)
\]
holds for all $\cA\preccurlyeq\cC$. The coefficients $\eta^{U}\!$ are
thus determined by $\eta^{U} \! (\cA,\cA) = 1/\theta^{U} \! (\cA,\cA)$
together with the recursion
\[
   \eta^{U} \! (\cA,\cC) \, = \, -\ts \frac{1}{\theta^{U} \! (\cA,\cA)} 
   \sum_{\cA \prec \udo{\cB} \preccurlyeq \cC} \! \theta^{U} \! (\cA,\cB) \,
   \eta^{U} \! (\cB,\cC)
\]
for $\cA\prec\cC$, which derives from the left equality above.
Alternatively, one may use the corresponding formula that derives from
the other identity.  

Either from direct calculations, or by invoking the results from
Section~\ref{sec:linear}, in particular
Theorem~\ref{thm:better-than-nothing} and Corollary~\ref{coro:add-on},
the following result is obvious (see the text after
Remark~\ref{rem:prob-vec} for the definitions of $\zeta$ and $\mu$).

\begin{coro}\label{coro:simple-cases}
  For any\/ $U\!$ with\/ $1\leqslant |U| \leqslant 3$, one has\/
  $\eta^{\ts U} = \zeta^{\ts U}\!$, and hence also\/ $\theta^{\ts U}\!
  = \mu^{U}\! $.  For larger sets\/ $U\!$, one has\/ $\eta^{\ts U}\!
  (\cA,\cA) = 1$ for\/ $\cA=\pmax$ and for all\/ $\cA\in\PP(U)$ with
  two parts, if one of them is a singleton set. In the latter case,
  also\/ $\theta^{\ts U}\!  (\cA,\cA) = 1$.
\end{coro}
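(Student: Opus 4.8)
The plan is to exploit the fact that, in exactly the cases listed, the nonlinear coefficient function $a^{\ts U}_{t}$ coincides with the linear solution of Proposition~\ref{prop:lin-solve}, whose coefficients we already know in closed M\"{o}bius form via Eq.~\eqref{eq:lin-decay} (applied verbatim to the subsystem $U$ with its marginal rates). Comparing this known form with the exponential ansatz \eqref{eq:gen-ansatz} for the same function, and using linear independence of the exponentials, then pins down $\theta^{\ts U}\!$; the statements on $\eta^{\ts U}$ follow by inversion. Throughout we work under the standing generic hypothesis that the decay rates are distinct, so that Lemma~\ref{lem:distinct} applies.

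First I would treat the range $1\leqslant |U|\leqslant 3$. Applying Corollary~\ref{coro:add-on} to the subsystem $U$ shows that $a^{\ts U}_{t}\!(\cA)$ equals the linear solution, hence equals $\sum_{\cB\succcurlyeq\cA}\mu^{U}\!(\cA,\cB\ts )\ts\ee^{-\chi^{U}\!(\cB\ts )\ts t}$ by Eq.~\eqref{eq:lin-decay}; at the same time it is given by the ansatz \eqref{eq:gen-ansatz}, while Eq.~\eqref{eq:psi-versus-chi} gives $\psi^{\ts U}\! = \chi^{\ts U}\!$ on all of $\PP(U)$. Thus both representations are sums over $\cB\in[\cA,\pmax]$ with identical exponents, which are linearly independent by Lemma~\ref{lem:distinct}. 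A comparison of coefficients yields $\theta^{\ts U}\!(\cA,\cB\ts ) = \mu^{U}\!(\cA,\cB\ts )$ for all $\cA\preccurlyeq\cB$, that is, $\theta^{\ts U}\! = \mu^{U}\!$; since $\mu^{U}\!$ is invertible with inverse $\zeta^{\ts U}\!$, this is equivalent to $\eta^{\ts U} = \zeta^{\ts U}\!$.

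Next I would handle the diagonal entries for larger $U$. For $\cA = \pmax$ the ansatz \eqref{eq:gen-ansatz} reduces to the single term $\theta^{\ts U}\!(\pmax,\pmax)\ts\ee^{-\psi^{\ts U}\!(\pmax)\ts t}$, so $\theta^{\ts U}\!(\pmax,\pmax)=1$ at once. For a two-part partition $\cA$ with a singleton block, Theorem~\ref{thm:better-than-nothing} again shows that $a^{\ts U}_{t}\!(\cA)$ equals the linear solution, and the text following Eq.~\eqref{eq:psi-versus-chi} guarantees $\psi^{\ts U}\! = \chi^{\ts U}\!$ at both $\cA$ and $\pmax$. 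Since $\cA$ has two parts, $[\cA,\pmax] = \{\cA,\pmax\}$, so each representation is a two-term sum over the same, distinct exponents; comparing the coefficient of $\ee^{-\psi^{\ts U}\!(\cA)\ts t}$ gives $\theta^{\ts U}\!(\cA,\cA) = \mu^{U}\!(\cA,\cA) = 1$. The claims for $\eta^{\ts U}$ then follow from $\eta^{\ts U}\!(\cA,\cA) = 1/\theta^{\ts U}\!(\cA,\cA)$, valid wherever $\theta^{\ts U}\!$ is invertible, as under the hypothesis of Proposition~\ref{prop:invertible}(3).

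The only genuine subtlety, and hence the step I would treat most carefully, is the legitimacy of the coefficient comparison, which requires the exponents in the two representations of the same function both to coincide and to be pairwise distinct. This is supplied by Eq.~\eqref{eq:psi-versus-chi} (for the matching $\psi^{\ts U}\! = \chi^{\ts U}\!$ on the relevant partitions) together with Lemma~\ref{lem:distinct} (for distinctness under the generic hypothesis); everything else is routine bookkeeping on the interval $[\cA,\pmax]$.
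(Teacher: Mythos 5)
Your proposal is correct and follows essentially the same route as the paper: both rest on Corollary~\ref{coro:add-on} for the case $1\leqslant |U|\leqslant 3$ and on Theorem~\ref{thm:better-than-nothing} for $\cA=\pmax$ and the two-part partitions with a singleton block, combined with the diagonal relation $\theta^{\ts U}\!(\cA,\cA)\,\eta^{\ts U}\!(\cA,\cA)=1$. You merely make explicit the coefficient comparison (via Eq.~\eqref{eq:psi-versus-chi} and Lemma~\ref{lem:distinct}) that the paper's terse proof leaves implicit, and you obtain $\theta^{\ts U}\!$ before $\eta^{\ts U}$ where the paper argues in the opposite order --- a cosmetic difference only.
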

\begin{proof}
  Corollary~\ref{coro:add-on} implies the first claim, while
  Theorem~\ref{thm:better-than-nothing} gives the second one for
  $\cA=\pmax$ as well as for any $\cA$ with two parts, provided one of
  them is a singleton set. The last claim follows from $\theta^{U}\!
  (\cA,\cA) \, \eta^{U}\! (\cA,\cA) = 1$.
\end{proof}

Let us now define
\begin{equation}\label{eq:def-b-coeff}
    b^{U}_{t} \! (\cA) \, = \sum_{\udo{\cB}\succcurlyeq \cA}
    \eta^{\ts U} \! (\cA,\cB\ts ) \, a^{U}_{t}\! (\cB\ts ) \ts ,
\end{equation}
which is an analogue of the summatory function from
Section~\ref{sec:linear} (see the proof of
Proposition~\ref{prop:lin-solve}).  Using $\eta^{U} \!\nts\nts *
\theta^{U} \!  = \delta^{U}$, we now obtain
\begin{equation}\label{eq:gen-summatory}
   b^{U}_{t}\! (\cA) \, = \, \ee^{-\psi^{\ts U} \! (\cA) \ts t} ,
\end{equation}
hence $b^{U}_{t}\! (\pmin) \equiv 1$ for all $\varnothing \ne
U\nts\subseteq S$ as a consequence of $\psi^{U}\! (\pmin) = 0$; compare
Lemma~\ref{lem:decay-sum}.  One difference to the linear case is that
the summation weights in Eq.~\eqref{eq:def-b-coeff} generally depend
on the recombination rates, while they were constant (in fact, given
by the $\zeta$-function of the incidence algebra) in
Section~\ref{sec:linear}. Due to the properties of the decay rates
$\psi^{U}\!$, one inherits the corresponding relations among the
coefficient functions $b^{U}_{t}$ for $\varnothing\ne U\nts\subseteq
S$. In particular,
\[
   b^{U}_{t}\! (\pmax) \, = \, \ee^{-\chi^{U}\! (\pmax)\ts t}
   \, =  \! \prod_{\pmax \ne \udo{\cA\ts}\nts \in \PP (U)}
   \! \! \ee^{-\varrho^{U}\! (\cA)\ts t}
\]
together with
\[
   b^{U}_{t}\! (\cA) \, = \, \prod_{i=1}^{|\cA|} b^{A_i}_{t} (\pmax)
\]
determines all coefficients, while Lemma~\ref{lem:decay-sum}
implies the additional relation
\[
   b^{U}_{t}\! (\cA) \, = \, \prod_{i=1}^{|\cB|} b^{B_i}_{t}
    (\cA|^{\pa}_{\nts B_i})
\]
for any $\cA,\cB\in\PP(U)$ with $\cA\preccurlyeq\cB$.

Note that the function $b^{U}_{t}\!$ generally does \emph{not} emerge
from $b^{S}_{t}$ via marginalisation in the sense of
Eq.~\eqref{eq:def-induced-rates}, which is another important
difference to the special situation of Section~\ref{sec:linear} and
Remark~\ref{rem:marg-sum}.

\begin{lemma}\label{lem:eta-props}
  Let\/ $\theta^{S}$ denote the coefficients from
  Theorem~$\ref{thm:rec-sol}$ in the case of distinct rates, and
  assume that\/ $\varrho^{S} (\cA) >0 $ for all\/ $\cA \in \PP (S)$
  with two parts, so that\/ $\theta^{S}$ is an invertible element of
  the incidence algebra. Then, for any non-empty\/ $U\nts \subseteq
  S$, one has the following properties.
\begin{enumerate}\itemsep=3pt
\item $\eta^{U} \! (\pmin,\pmin) = 1$;
\item $\eta^{U} \! (\pmin,\cA) = 1\vph$ holds for all\/ 
    $\cA\in\PP (U)$;
\item $\eta^{U} \! (\cA,\pmax) = 1\vph$ holds for all\/ $\cA\in\PP
   (U)$ and all\/ $\varnothing \ne U\nts\subseteq S$.
\end{enumerate}
\end{lemma}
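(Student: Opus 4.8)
The plan is to prove all three identities purely inside the incidence algebra $\AAA(\PP(U))$, using that under the standing hypotheses $\theta^{U}\!$ is a two-sided invertible element with inverse $\eta^{U}\!$ (Proposition~\ref{prop:invertible}(3)), and that these hypotheses descend to every non-empty $U\nts\subseteq S$ by Lemma~\ref{lem:distinct} and the formulation of Proposition~\ref{prop:invertible}. The guiding observation is that (2) and (3) each say that one row, respectively one column, of $\eta^{U}\!$ coincides with that of the $\zeta$-function. I would obtain each by first recording a suitable \emph{sum identity} for $\theta^{U}\!$ in convolution form, and then cancelling $\theta^{U}\!$ against $\eta^{U}\!$ on the appropriate side.

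For claim (1), I would simply combine the diagonal relation $\eta^{U}\!(\cA,\cA)=1/\theta^{U}\!(\cA,\cA)$ recorded just before the lemma with $\theta^{U}\!(\pmin,\pmin)=1$ from Proposition~\ref{prop:invertible}(1); this yields $\eta^{U}\!(\pmin,\pmin)=1$ at once (and is in any case the $\cA=\pmin$ instance of (2)). For claim (2), the input is Proposition~\ref{prop:invertible}(2). Since $\zeta(\pmin,\cA)=1$ for every $\cA$, the sum $\sum_{\udo{\cA\ts}\nts\preccurlyeq\cB}\theta^{U}\!(\cA,\cB)$ is exactly $(\zeta*\theta^{U}\!)(\pmin,\cB)$, so that part of the proposition reads $(\zeta*\theta^{U}\!)(\pmin,\cB)=\delta(\pmin,\cB)$ for all $\cB$. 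As $\eta^{U}\!*\theta^{U}\!=\delta$ gives the same $\pmin$-row, the element $g:=\zeta-\eta^{U}\!$ satisfies $(g*\theta^{U}\!)(\pmin,\cB)=0$ for all $\cB$. Writing $g=(g*\theta^{U}\!)*\eta^{U}\!$ and evaluating at $(\pmin,\cB)$ then forces $g(\pmin,\cB)=0$, i.e.\ $\eta^{U}\!(\pmin,\cB)=1$ for all $\cB\in\PP(U)$.

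For claim (3) I would first extract the dual ``row-sum'' identity $\sum_{\udo{\cC}\succcurlyeq\cA}\theta^{U}\!(\cA,\cC)=\delta^{U}\!(\cA,\pmax)$ from Eq.~\eqref{eq:theta-initial} together with $\theta^{U}\!(\pmax,\pmax)=1$: for $\cA\prec\pmax$ the relation there says precisely $\sum_{\cA\preccurlyeq\udo{\cC}\preccurlyeq\pmax}\theta^{U}\!(\cA,\cC)=0$, while for $\cA=\pmax$ the sum reduces to $\theta^{U}\!(\pmax,\pmax)=1$. Because $\zeta(\cC,\pmax)=1$ for all $\cC$, this identity states that the $\pmax$-column of $\theta^{U}\!*\zeta$ equals that of $\theta^{U}\!*\eta^{U}\!=\delta$. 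Hence $(\theta^{U}\!*g)(\cA,\pmax)=0$ for $g=\zeta-\eta^{U}\!$, and this time I would left-multiply, via $g=\eta^{U}\!*(\theta^{U}\!*g)$, to conclude $g(\cA,\pmax)=0$, i.e.\ $\eta^{U}\!(\cA,\pmax)=1$ for all $\cA\in\PP(U)$.

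The computation is short once set up, so I do not expect a genuine obstacle; the only delicate points are bookkeeping ones. First, one must cast the two auxiliary identities into clean convolution form --- a $\pmin$-row sum for $\theta^{U}\!$ in (2), a $\pmax$-column sum in (3) --- which for (3) means reading Eq.~\eqref{eq:theta-initial} the right way round. Second, one must cancel $\theta^{U}\!$ on the \emph{correct} side (right-multiplication by $\eta^{U}\!$ in (2), left-multiplication in (3)), which is legitimate precisely because $\theta^{U}\!$ is two-sided invertible. I would also make explicit at the outset that the hypotheses of Proposition~\ref{prop:invertible}(3) hold for every subsystem $U$, so that $\eta^{U}\!$ exists for each of them and the argument may be run uniformly in $U$.
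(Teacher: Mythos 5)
Your proof is correct, and for claims (2) and (3) it follows a genuinely different route from the paper's. For (1), you and the paper do the same thing: combine $\theta^{U}\!(\pmin,\pmin)=1$ from Proposition~\ref{prop:invertible}(1) with the diagonal relation $\eta^{U}\!(\cA,\cA)=1/\theta^{U}\!(\cA,\cA)$. For (2), the paper argues by induction over the lattice, using the explicit inversion recursion $\eta^{U}\!(\pmin,\cA) = -\ts\eta^{U}\!(\cA,\cA)\sum_{\udo{\cC}\prec\cA}\eta^{U}\!(\pmin,\cC)\ts\theta^{U}\!(\cC,\cA)$ and then invoking Proposition~\ref{prop:invertible}(2) to collapse the sum; you instead package Proposition~\ref{prop:invertible}(2) as the convolution identity $(\zeta*\theta^{U}\!)(\pmin,\cdot)=\delta(\pmin,\cdot)$ and cancel $\theta^{U}\!$ on the right via $g=(g*\theta^{U}\!)*\eta^{U}\!$ with $g=\zeta-\eta^{U}\!$, which eliminates the induction entirely. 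For (3), the difference is more pronounced: the paper evaluates the functions $b^{U}_{t}$ of Eqs.~\eqref{eq:def-b-coeff} and \eqref{eq:gen-summatory} at $t=0$, i.e.\ it goes through the analytic identity $b^{U}_{t}\!(\cA)=\ee^{-\psi^{U}\!(\cA)\ts t}$, whereas you stay inside the incidence algebra, extracting the row-sum identity $\sum_{\udo{\cC}\succcurlyeq\cA}\theta^{U}\!(\cA,\cC)=\delta^{U}\!(\cA,\pmax)$ from Eq.~\eqref{eq:theta-initial} (this is legitimate --- it is exactly the initial-condition identity from which Eq.~\eqref{eq:theta-initial} was derived) and then cancelling $\theta^{U}\!$ on the left. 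Your side-of-cancellation bookkeeping is right, the two-sided invertibility needed for it is supplied by Proposition~\ref{prop:invertible}(3) uniformly in $U$, and Lemma~\ref{lem:distinct} justifies running the argument on every subsystem. What your approach buys is uniformity and transparency: claims (2) and (3) become manifestly dual statements --- the $\pmin$-row and the $\pmax$-column of $\eta^{U}\!$ coincide with those of $\zeta$ --- proved by one symmetric cancellation pattern, with no appeal to the solution functions. What the paper's route buys is economy of means at that point in the text: the identity $b^{U}_{0}\!(\cA)=1$ is needed anyway for the subsequent discussion of the $b$-functions, so claim (3) falls out of material already on the table.
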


\begin{proof}
  The invertibility of $\theta^{U}$ is clear from part (3) of
  Proposition~\ref{prop:invertible}, while $\eta^{U} \!
  (\pmin,\pmin) = 1$ is a consequence of the first assertion
  of the same proposition.

Next, we already know that $\eta^{U} \! (\pmin,\pmin) = 1$.  For $\cA
\succ \pmin$, we can proceed inductively via the standard inversion
formula \cite{Aigner} for $\eta^{U} \! = \bigl(\theta^{U}\bigr)^{\nts
  -1}$, which gives
\[
\begin{split}
   \eta^{U} \! (\pmin,\cA) \, & = \, -\ts \eta^{U} \! (\cA,\cA)
   \sum_{\udo{\cC}\prec \cA} \eta^{U} \! (\pmin,\cC) \,
   \theta^{U} \! (\cC,\cA) \, = -\ts \eta^{U} \! (\cA,\cA)
   \sum_{\udo{\cC}\prec \cA}  \theta^{U} \! (\cC,\cA) \\[1mm]
   & = \, \eta^{U} \! (\cA,\cA)
   \Bigl( \theta^{U} \! (\cA,\cA) \, -
   \sum_{\udo{\cC}\preccurlyeq \cA}  \theta^{U} \! (\cC,\cA)
   \Bigr)   \, = \, \eta^{U} \! (\cA,\cA) \, 
   \theta^{U} \! (\cA,\cA) \, = \, 1 \ts ,
\end{split}
\]
where the second step employs the induction hypothesis while the
second assertion of Proposition~\ref{prop:invertible} was used in the
penultimate step.

The third claim is a consequence of the initial conditions together
with Eqs.~\eqref{eq:def-b-coeff} and \eqref{eq:gen-summatory}, because
\[
   1 \, = \, b^{U}_{0} \! (\cA) \, = 
   \sum_{\udo{\cB} \succcurlyeq \cA}\eta^{U}\! (\cA,\cB) \, a^{U}_{0}
   \! (\cB) \, = \sum_{\udo{\cB} \succcurlyeq \cA}\eta^{U}\! 
   (\cA,\cB) \, \delta^{U}\! (\cB,\pmax) \, = \, \eta^{U} \! (\cA,\pmax)
\]
holds for any $\cA\in\PP (U)$.
\end{proof}

It is quite clear that the structure of $\eta$ is somewhat simpler
than that of $\theta$. At present, we have $\eta$ defined as the
inverse function to $\theta$ in the incidence algebra, but it would be
nice to also have a direct way to calculate it, for instance via
another recursion.

\section{Some comments on the singular cases}
\label{sec:degen}

Our focus in the previous section was on the generic case that
allowed for a recursively defined, general solution. Let us now
briefly look into what happens when certain degeneracies among
the decay rates occur.

Recall that our ansatz \eqref{eq:gen-ansatz} requires, a priori, the
linear independence of the exponential functions $\ee^{-\psi^{U} \!
  (\cA) \ts t}$, and thus the distinctness of the decay rates
$\psi^{U}\! (\cA)$, for $\cA\in\PP (U)$, separately for all non-empty
$U\nts \subseteq S$. A posteriori, we need to understand what happens
when two decay rates, as functions of the recombination rates
$\varrho^{S}$ of the system on the top level, become equal. Once
again, this is best looked at inductively.  If $\varnothing \ne U \nts
\subseteq S$ satisfies $\lvert U \rvert \leqslant 3$, we are in the
realm of the `linear' solution of Section~\ref{sec:linear}, and no
consequences emerge from degeneracies. In other words, the solution
formula from Proposition~\ref{prop:lin-solve} is valid for \emph{all}
values of the recombination rates, irrespective of possible
degeneracies; compare Corollaries~\ref{coro:simple-cases} and
\ref{coro:add-on}.

When we step up in system size, there can be degeneracies that are
still `harmless' in the sense that the $\theta$-coefficients extend
continuously to these situations. In this case, the (now reduced) set
of exponentials in Eq.~\eqref{eq:gen-ansatz} is still sufficient, and
the solution valid. However, as is evident from
Eq.~\eqref{eq:theta-rec}, the situation changes when $\psi^{U}\! (\cB)
= \psi^{U} \! (\pmax)$ for some $\cB\in\PP (U)$, as we then hit a
singularity. To understand the underlying phenomenon, let us assume
that $U$ corresponds to the smallest subsystem where this type of
`bad' degeneracy occurs. Now, rewrite Eq.~\eqref{eq:rec-reduce} as
\[
    \dot{a}^{\ts U}_{t} \! (\cA) \, = \, - \ts\ts \psi^{U} \! (\pmax)
    \, a^{\ts U}_{t}\! (\cA) \; + \! 
    \sum_{\cA \preccurlyeq \udo{\cB} \prec \pmax} \!
    \varepsilon (\cA,\cB) \, \ee^{-\psi^{U} \! (\cB) \ts t}
\]
with $ \varepsilon (\cA,\cB) = \sum_{\cB \preccurlyeq \udo{\cC} \prec
  \pmax} \varrho^{U} \! (\cC)\, \prod_{i=1}^{\lvert \cC\rvert}
\theta^{C_{i}} (\cA|^{\pa}_{\nts C_{i}} , \cB|^{\pa}_{\nts
  C_{i}})$. Note that the $ \varepsilon (\cA,\cB)$ are well-defined
for all $\cA,\cB\in\PP (U)$ and \emph{all} values of the recombination
rates, because only $\theta$-coefficients of subsystems smaller than
$U\!$ occur in the product. We are now in the standard situation of
ODE theory that we have summarised in Lemma~\ref{lem:help} of the
Appendix:\ Precisely when $\psi^{U}\! (\cB) = \psi^{U} \! (\pmax)$ for
some $\cB\in\PP (U)$, we need an additional function for our ansatz,
namely $t \cdot \ee^{-\psi^{U} \! (\cB) \ts t}$. So, our original
ansatz \eqref{eq:gen-ansatz} no longer suffices, and has to be
modified accordingly. At the same time, in line with our previous
observation, additional degeneracies of the form $\psi^{U}\! (\cB) =
\psi^{U} \! (\cB')$ with $\cB,\cB' \ne \pmax$ are harmless.

Now, this dichotomic structure continues on each new level: There are
`harmless' degeneracies that do not require additional functions for
our ansatz, while degeneracies of the type $\psi^{U}\! (\cB) =
\psi^{U} \! (\pmax)$ once again render the function set of the ansatz
incomplete, even one that was augmented in the previous
step. Lemma~\ref{lem:help2} of the Appendix reviews a typical case,
while the remarks following it show how the degenerate case produces
extra monomial factors of increasing exponents for each degeneracy of
the `bad' type. At this point, we hope that the general structure is
sufficiently clear, so that we can leave further details to the
reader.

Let us summarise our discussion as follows.
\begin{coro}
  If\/ $S$ is a finite set as before, the generic recursive solution
  from Theorem~$\ref{thm:rec-sol}$ extends to all recombination
  rates\/ $\varrho^{S} (\cA)$, with\/ $\cA\in\PP (S)$, such that\/
  $\psi^{U}\! (\pmax) \ne \psi^{U}\! (\cB)$ holds for all non-empty\/
  $U\nts\subseteq S$ and all\/ $\cB\in\PP (U)$.  \qed
\end{coro}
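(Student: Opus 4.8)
The plan is to argue by induction on the system size $|U|$, exploiting the one structural feature of the recursion that governs everything: in Eq.~\eqref{eq:theta-rec}, the only denominator that ever occurs is $\psi^{\ts U}\!(\pmax) - \psi^{\ts U}\!(\cB\ts)$, and never a difference of two \emph{non-maximal} decay rates. Consequently, a harmless degeneracy $\psi^{\ts U}\!(\cB) = \psi^{\ts U}\!(\cB')$ with $\cB,\cB' \ne \pmax$ enters no denominator at all, whereas a degeneracy $\psi^{\ts U}\!(\cB) = \psi^{\ts U}\!(\pmax)$ with $\cB\ne\pmax$ produces a genuine singularity. The hypothesis is precisely the exclusion of the latter, for every non-empty $U\nts\subseteq S$ simultaneously, so the recursion \eqref{eq:theta-rec} stays well-defined at every level, irrespective of coincidences among the non-maximal rates.

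For the base of the induction, recall from Corollary~\ref{coro:add-on} and Proposition~\ref{prop:lin-solve} that for $1\leqslant|U|\leqslant 3$ the linear formula is the solution for \emph{all} choices of recombination rates, with no degeneracy playing any role. For the inductive step, I would assume the statement for every $V$ with $|V|<|U|$. Then all coefficients $\theta^{C_i}$ on the right-hand side of Eq.~\eqref{eq:theta-rec} — which live on parts $C_i$ of some $\cC\prec\pmax$, hence on proper subsets of $U$, so that $|C_i|<|U|$ — are already known to furnish valid solutions of their subsystem ODEs. On the level of $U$ itself I would simply \emph{define} $\theta^{\ts U}$ by the recursion \eqref{eq:theta-rec} for $\cA\preccurlyeq\cB\prec\pmax$, together with $\theta^{\ts U}\!(\pmax,\pmax)=1$ and the constraint \eqref{eq:theta-initial}; under the hypothesis this is unambiguous, since every denominator is nonzero.

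The crux is then to \emph{verify} that the ansatz \eqref{eq:gen-ansatz} solves the ODE, rather than to re-\emph{derive} the $\theta$'s by comparison of coefficients. The two computations \eqref{eq:a-derivative} and \eqref{eq:rec-reduce} are valid algebraic identities for the substituted ansatz, obtained via Lemma~\ref{lem:decay-sum} and the induction hypothesis alone, and neither uses linear independence. By construction, Eq.~\eqref{eq:theta-rec} makes the two agree term by term in $\cB$: for $\cB\prec\pmax$ this is exactly \eqref{eq:theta-rec} rewritten as $\psi^{\ts U}\!(\cB\ts)\,\theta^{\ts U}\!(\cA,\cB\ts) = \psi^{\ts U}\!(\pmax)\,\theta^{\ts U}\!(\cA,\cB\ts) - \sum_{\cB\preccurlyeq\udo{\cC}\prec\pmax}\varrho^{\ts U}\!(\cC)\prod_{i}\theta^{C_i}(\cA|^{\pa}_{C_i},\cB|^{\pa}_{C_i})$, while for $\cB=\pmax$ the inner sum is empty and both sides collapse to $\psi^{\ts U}\!(\pmax)\,\theta^{\ts U}\!(\cA,\pmax)$. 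Hence the two expressions for $\dot a^{\ts U}_t(\cA)$ coincide term by term, and therefore \emph{a fortiori} after collecting exponentials with equal decay rate. The initial condition is enforced by \eqref{eq:theta-initial} and $\theta^{\ts U}\!(\pmax,\pmax)=1$, so by the uniqueness part of Proposition~\ref{prop:reduction-props} the ansatz is the solution, completing the induction.

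I expect the main obstacle to be conceptual rather than computational: in the presence of harmless degeneracies the exponentials $\ee^{-\psi^{\ts U}\!(\cB)\ts t}$ are no longer linearly independent, so the comparison-of-coefficients step that powered Theorem~\ref{thm:rec-sol} is unavailable, and Lemma~\ref{lem:distinct} no longer applies. The resolution sketched above sidesteps this by switching from derivation to verification: once $\theta^{\ts U}$ is fixed by the now well-defined recursion, the per-$\cB$ matching of \eqref{eq:a-derivative} and \eqref{eq:rec-reduce} is an identity that survives grouping, so linear independence is simply not needed to certify the solution. One has only to confirm — directly from the shape of \eqref{eq:theta-rec} — that a bad degeneracy $\psi^{\ts U}\!(\cB)=\psi^{\ts U}\!(\pmax)$ is the \emph{sole} obstruction to this scheme (it is the only event that makes the recursion diverge and, as Lemma~\ref{lem:help} of the Appendix shows, the only one that forces an extra monomial factor into the ansatz), and this is exactly what the hypothesis rules out at every level $U$.
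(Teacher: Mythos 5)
Your strategy---induction on $|U|$, the observation that the recursion \eqref{eq:theta-rec} only ever divides by $\psi^{\ts U}\!(\pmax)-\psi^{\ts U}\!(\cB)$ (so that coincidences among non-maximal rates never obstruct it), and the switch from comparison of coefficients to verification---is sound and runs parallel to the paper's own discussion, which closes the same induction by solving the level-$U$ equation explicitly via Lemma~\ref{lem:help}. But there is a genuine gap at your final step, the appeal to ``the uniqueness part of Proposition~\ref{prop:reduction-props}''. What your term-by-term matching of \eqref{eq:a-derivative} against \eqref{eq:rec-reduce} actually establishes is that the ansatz family satisfies
\[
  \dot a^{\ts U}_{t}(\cA) \, = \, -\ts \psi^{\ts U}\!(\pmax)\, a^{\ts U}_{t}(\cA)
  \, + \sum_{\cA \preccurlyeq \udo{\cB} \prec \pmax} \varrho^{\ts U}\!(\cB)
  \prod_{i=1}^{|\cB|} a^{B_i}_{t}\bigl(\cA|^{\pa}_{\nts B_i}\bigr) ,
\]
where the $a^{B_i}_{t}$ are the \emph{independently defined} subsystem ansatz functions. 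This is not the equation to which Proposition~\ref{prop:reduction-props} applies: the right-hand side of Eq.~\eqref{eq:gen-reco-coeff-DGL} at level $U$ contains $\gamma^{U}(a^{U}_{t};\cA,\cB)$, i.e.\ products of marginal sums of $a^{U}_{t}$ \emph{itself}. The two right-hand sides coincide only if the marginalisation of your level-$U$ ansatz equals the subsystem ansatz --- precisely the marginalisation consistency of the ansatz family, which your induction hypothesis (a statement about sets of size strictly less than $|U|$) does not cover and which you never establish. As written, the uniqueness invocation therefore does not go through.

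The repair is short and uses only ingredients already on your table. Let $\tilde a^{U}_{t}$ be the unique true solution of Eq.~\eqref{eq:gen-reco-coeff-DGL} at level $U$ (Proposition~\ref{prop:reduction-props}). By Proposition~\ref{prop:marg-consistent}, its marginals solve the subsystem Cauchy problems, hence by your induction hypothesis they \emph{equal} the subsystem ansatz functions; consequently $\tilde a^{U}_{t}$ satisfies the displayed \emph{linear} inhomogeneous ODE with the same known inhomogeneity as your ansatz. Since both satisfy that linear equation with the same initial condition, elementary uniqueness for linear ODEs (not Proposition~\ref{prop:reduction-props}) yields $\tilde a^{U}_{t} = a^{U}_{t}$, closing the induction; marginalisation consistency of the ansatz then follows a posteriori rather than being assumed. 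This detour through the true solution and the linear equation is exactly what the paper's route accomplishes: there, Lemma~\ref{lem:help} solves that linear equation outright, and its two cases for $E^{\pa}_{0}$ display the same dichotomy you describe --- pure exponentials unless $\psi^{\ts U}\!(\cB)=\psi^{\ts U}\!(\pmax)$, in which case a factor $t$ intrudes and the ansatz genuinely fails.
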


\section*{Appendix}

Here, we state some useful results from classical ODE theory, whose
proofs are straightforward exercises that are left to the reader.
\begin{lemma}\label{lem:help}
  Let\/ $\varrho$ and\/ $\sigma^{\pa}_{1}, \dots , \sigma^{\pa}_{m}$
  be non-negative real numbers.  Then, the Cauchy problem defined by
  the ODE
\[
     \dot{g} \, = \, -\varrho \, g \, + \sum_{i=1}^{m} 
     \varepsilon^{\pa}_{i} \ts \ee^{-\sigma^{\pa}_{i} \ts t}
\]   
together with the initial condition\/ $g(0) = g^{\pa}_{0}$ has the
unique solution
\[
     g(t) \, = \, g^{\pa}_{0} \, \ee^{-\varrho \ts t} \, +
     \sum_{i=1}^{m} \varepsilon^{\pa}_{i} \, 
      E^{\pa}_{0} (\varrho, \sigma^{\pa}_{i}; t) \ts ,
\]   
   where
\[
\begin{split}
    E^{\pa}_{0} (\alpha,\beta;t) \, & := \sum_{\ell=0}^{\infty} (-1)^{\ell} 
     \ts \bigl(\alpha^{\ell} + \alpha^{\ell-1} \beta + \dots +
      \alpha\ts \beta^{\ell-1} + \beta^{\ell} \bigr) \ts 
      \frac{t^{\ell+1}}{(\ell+1)!} \\
      & = \, \begin{cases}
      t \, \ee^{-\alpha\ts t} , & \text{if $\alpha = \beta$}, \\
      \frac{1}{\alpha - \beta}\, \bigl( \ee^{-\beta \ts t} - 
      \ee^{-\alpha\ts t} \bigr) ,
      & \text{otherwise} , \end{cases}
\end{split}
\]   
is a smooth function that is non-negative for all\/ $t\geqslant 0$ and
symmetric under the exchange of the two parameters\/ $\alpha$ and\/
$\beta$.  \qed
\end{lemma}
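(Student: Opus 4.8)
The plan is to treat this simply as a scalar, first-order linear inhomogeneous ODE and solve it by the integrating-factor method, since the right-hand side depends on $g$ only through the linear term $-\varrho\ts g$. First I would multiply the equation by the integrating factor $\ee^{\varrho t}$, turning it into $\frac{\dd}{\dd t}\bigl(\ee^{\varrho t} g\bigr) = \sum_{i=1}^{m} \varepsilon_i \ts \ee^{(\varrho - \sigma_i) t}$, and then integrate from $0$ to $t$ using the initial value $g(0)=g_0$. This yields $g(t) = g_0\ts \ee^{-\varrho t} + \ee^{-\varrho t}\sum_{i=1}^{m}\varepsilon_i \int_0^t \ee^{(\varrho - \sigma_i) s}\dd s$, so the whole task reduces to evaluating the single-exponential integrals and identifying them with the function $E_0$.

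For each $i$, the integral $\int_0^t \ee^{(\varrho-\sigma_i)s}\dd s$ splits into the two cases $\varrho = \sigma_i$ (giving $t$) and $\varrho \ne \sigma_i$ (giving $(\ee^{(\varrho-\sigma_i)t}-1)/(\varrho-\sigma_i)$). After restoring the prefactor $\ee^{-\varrho t}$, these become exactly $t\ts\ee^{-\varrho t}$ and $(\ee^{-\sigma_i t}-\ee^{-\varrho t})/(\varrho-\sigma_i)$, which match the two branches of the closed form claimed for $E_0(\varrho,\sigma_i;t)$ upon setting $\alpha=\varrho$ and $\beta=\sigma_i$. Uniqueness of the solution is immediate, either from the explicit formula or from the global Lipschitz property of the affine right-hand side.

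It then remains to reconcile the closed form with the series definition of $E_0$ and to establish smoothness. The key observation is that the bracketed factor $\alpha^\ell + \alpha^{\ell-1}\beta + \dots + \beta^\ell$ is the complete homogeneous symmetric polynomial in $\alpha,\beta$, equal to $(\alpha^{\ell+1}-\beta^{\ell+1})/(\alpha-\beta)$ when $\alpha\ne\beta$ and to $(\ell+1)\ts\alpha^\ell$ when $\alpha=\beta$. Substituting this, re-indexing by $m=\ell+1$, and summing the two resulting exponential series recovers $(\ee^{-\beta t}-\ee^{-\alpha t})/(\alpha-\beta)$ and $t\ts\ee^{-\alpha t}$ respectively; this simultaneously shows that the power series is entire in $(\alpha,\beta,t)$, so $E_0$ is smooth and the apparent singularity at $\alpha=\beta$ is removable.

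For the remaining qualitative properties I would use the integral representation $E_0(\alpha,\beta;t)=\int_0^t \ee^{-\alpha(t-s)}\ts\ee^{-\beta s}\dd s$, which one verifies directly in both cases. Since the integrand is non-negative for $\alpha,\beta\geqslant 0$ and $0\leqslant s\leqslant t$, non-negativity follows at once, and the substitution $s\mapsto t-s$ exchanges $\alpha$ and $\beta$, giving the symmetry. None of these steps is a genuine obstacle; the only point demanding care is the bookkeeping consistency of the degenerate case $\alpha=\beta$ across the series, closed-form and integral descriptions, which is exactly the limit relevant to the singular scenarios discussed in Section~\ref{sec:degen}.
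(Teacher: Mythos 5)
Your proposal is correct and complete: the paper deliberately omits a proof of Lemma~\ref{lem:help} (the Appendix declares these results ``straightforward exercises that are left to the reader''), and your integrating-factor computation, the resummation of the series via the identity $\alpha^{\ell} + \alpha^{\ell-1}\beta + \dots + \beta^{\ell} = (\alpha^{\ell+1}-\beta^{\ell+1})/(\alpha-\beta)$ for $\alpha\ne\beta$ (respectively $(\ell+1)\ts\alpha^{\ell}$ for $\alpha=\beta$), and the representation $E^{\pa}_{0}(\alpha,\beta;t)=\int_0^t \ee^{-\alpha(t-s)}\ts\ee^{-\beta s}\dd s$ together supply exactly the classical argument the authors have in mind. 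The integral representation is a particularly clean way to get non-negativity and the $\alpha\leftrightarrow\beta$ symmetry in one stroke, and your observation that the power series is entire correctly disposes of smoothness across the degenerate case $\alpha=\beta$.
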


An illustration of the function $E^{\pa}_{0}$ is shown in the left
panel of Figure~\ref{fig:efun}.  A similar type of result emerges for
mixtures of exponentials with higher order monomials.

\begin{figure}
\begin{center}
  \includegraphics[width=0.87\textwidth]{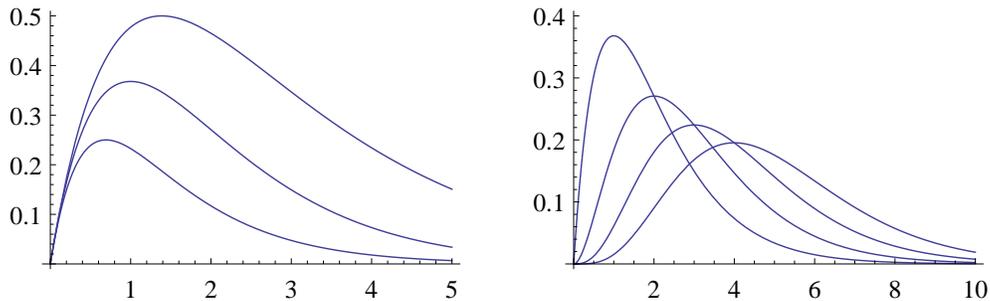}
\end{center}
\caption{Left panel:\ Illustration\label{fig:efun} of the function
  $E^{}_{0}(\alpha,\beta;t)$ of Lemma~\ref{lem:help} for $\alpha=1$
  and three values of $\beta$, namely $\beta=0.5$ (top curve),
  $\beta=1$ (middle) and $\beta=2$ (bottom). Right panel:\
  Illustration of $E_{m} (1,1;t)$ for parameters $m\in \{0,1,2,3\}$
  (from left to right).}
\end{figure}

\begin{lemma}\label{lem:help2}
  Let\/ $\varrho$ and\/ $\sigma$ be non-negative real numbers, and $m
  \in \NN_0$. Then, the Cauchy problem
\[
   \dot g \, = \, - \varrho\, g \, + \, 
   \varepsilon\ts \frac{t^m}{m!}\ts \ee^{-\sigma t}
\]
with initial condition\/ $g(0)=g_0$ has the unique solution\/
$g(t)=g^{\pa}_{0} \,\ee^{-\varrho t} + \varepsilon\, E_m(\varrho,
\sigma; t)$, where
\[
\begin{split}
    E_m(\varrho, \sigma; t) \, & := \,
     \ee^{-\varrho t} \int_0^t \frac{\tau^m}{m!} \,
     \ee^{(\varrho-\sigma) \tau} \dd \tau \\[1mm]
    & = \,
    \begin{cases}
       \ee^{-\varrho t}\ts \frac{(-1)^{m+1}}{(\varrho-\sigma)^{m+1}} +
       \ee^{-\sigma t} \sum_{\ell=0}^m \frac{(-1)^{\ell}\, t^{m-\ell} }
              {(m-\ell)!\, (\varrho - \sigma)^{\ell+1}} , &
       \text{if \;} \sigma \ne \varrho \ts , \\[2mm]
       \frac{t^{m+1}}{(m+1)!}\, \ee^{-\varrho t}, & \text{if \;} 
        \sigma = \varrho \ts ,
    \end{cases}
\end{split}
\]
is a smooth non-negative function for all $t \geqslant 0$. \qed
\end{lemma}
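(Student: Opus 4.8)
The plan is to treat this as a standard linear, inhomogeneous, first-order scalar ODE with constant coefficient $-\varrho$ and to produce the solution by the integrating-factor method (equivalently, by Duhamel's principle). First I would multiply the ODE $\dot g = -\varrho\, g + \varepsilon\, \frac{t^m}{m!}\ee^{-\sigma t}$ by $\ee^{\varrho t}$, so that the left-hand side collapses to $\frac{\dd}{\dd t}\bigl(\ee^{\varrho t} g\bigr)$. Integrating from $0$ to $t$ and using $g(0)=g_0$ then gives $\ee^{\varrho t} g(t) = g_0 + \varepsilon \int_0^t \frac{\tau^m}{m!}\ee^{(\varrho-\sigma)\tau}\dd\tau$, and multiplying back by $\ee^{-\varrho t}$ yields exactly $g(t) = g_0\, \ee^{-\varrho t} + \varepsilon\, E_m(\varrho,\sigma;t)$ with $E_m$ as defined in the statement. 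Uniqueness is immediate, since the right-hand side is affine linear in $g$, hence globally Lipschitz, so the Picard--Lindel\"of theorem applies; this is entirely analogous to the uniqueness arguments already used for Eqs.~\eqref{eq:reco-eq} and \eqref{eq:gen-reco-coeff-DGL}.

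It then remains to evaluate the integral defining $E_m$ in closed form, splitting into two cases. When $\sigma = \varrho$, the exponential inside the integral is identically $1$, so the integrand is just $\tau^m/m!$ and integration gives $\frac{t^{m+1}}{(m+1)!}$, whence $E_m(\varrho,\varrho;t) = \frac{t^{m+1}}{(m+1)!}\ee^{-\varrho t}$, as claimed. When $\sigma \ne \varrho$, I would write $a := \varrho - \sigma \ne 0$ and integrate $\int_0^t \frac{\tau^m}{m!}\ee^{a\tau}\dd\tau$ by parts $m$ times; each step lowers the power of $\tau$ by one and contributes a further factor $1/a$ together with a boundary term $\frac{(-1)^\ell t^{m-\ell}}{(m-\ell)!\,a^{\ell+1}}\ee^{at}$ at the upper limit (the lower-limit boundary terms all vanish for $m\geqslant 1$). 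After $m$ steps the remaining elementary integral $\int_0^t \ee^{a\tau}\dd\tau = \frac{1}{a}(\ee^{at}-1)$ supplies the final summand of the boundary sum at $\ell=m$ together with the single leftover term $-\frac{(-1)^m}{a^{m+1}} = \frac{(-1)^{m+1}}{a^{m+1}}$ from its lower limit. Multiplying the result by $\ee^{-\varrho t}$ and substituting $a = \varrho-\sigma$ then reproduces the two-part expression in the statement: the boundary sum carries the overall factor $\ee^{-\sigma t}$, while the leftover term carries $\ee^{-\varrho t}$.

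Finally, non-negativity is transparent from the integral representation itself, which is the cleanest route to avoid reasoning from the closed forms: for $t\geqslant 0$ the integrand $\frac{\tau^m}{m!}\ee^{(\varrho-\sigma)\tau}$ is non-negative on $[0,t]$, and the prefactor $\ee^{-\varrho t}$ is positive, so $E_m(\varrho,\sigma;t)\geqslant 0$; smoothness in $t$ follows since $E_m$ is the integral of a jointly smooth integrand. The only mildly delicate point is the index bookkeeping in the repeated integration by parts for $\sigma\ne\varrho$, but this is routine; should one prefer to sidestep it entirely, it suffices to differentiate the proposed closed form and check directly that it satisfies the ODE and has the correct value $E_m(\varrho,\sigma;0)=0$ at $t=0$, which then pins down $E_m$ uniquely by the already-established uniqueness.
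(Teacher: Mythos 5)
Your proof is correct and complete: the integrating-factor reduction to $g(t)=g_0\ts\ee^{-\varrho t}+\varepsilon\ts E_m(\varrho,\sigma;t)$, the $m$-fold integration by parts for $\sigma\ne\varrho$ (whose bookkeeping you describe accurately — the upper-limit boundary terms assemble into the sum carrying $\ee^{-\sigma t}$, while the single lower-limit leftover $(-1)^{m+1}/(\varrho-\sigma)^{m+1}$ carries $\ee^{-\varrho t}$), the direct evaluation for $\sigma=\varrho$, and the reading of smoothness and non-negativity straight off the integral representation all check out. For comparison, the paper offers no proof at all — its Appendix explicitly leaves these facts from classical ODE theory as straightforward exercises for the reader — and your argument is exactly the standard one that is intended, including the sensible fallback you mention of verifying the closed form by differentiation and invoking uniqueness.
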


Some examples are illustrated in the right panel of
Figure~\ref{fig:efun}.  By standard results from ODE theory, it is
clear how to combine the results of Lemmas~\ref{lem:help} and
\ref{lem:help2} to cover further situations. Let us only add that the
ODE
\[
       \dot{g} \, = \, -\varrho\, g \, + \sum_{\ell = 0}^{m} 
       \varepsilon^{\pa}_{\ell} \,
       \frac{t^{\ell}}{\ell !} \, \ee^{-\varrho \ts t}
\]
with initial condition $g(0) = g^{\pa}_{0}$ has the unique solution
\[
       g(t) \, = \,  g^{\pa}_{0} \, \ee^{-\varrho\ts t} \, +  
       \sum_{\ell = 0}^{m} \varepsilon^{\pa}_{\ell} \,
       \frac{t^{\ell+1}}{(\ell + 1) !} \, \ee^{-\varrho \ts t} ,
\] 
which explains the appearance of monomial factors with increasing
powers in the solution of such equations with degenerate rates.

\section*{Acknowledgements}
MB would like to thank Roland Speicher for valuable discussions. We
thank an anonymous reviewer for his insightful and constructive
comments. This work was supported by the German Research Foundation
(DFG), within the SPP 1590.

\smallskip

\end{document}